\DeclareSymbolFont{AMSb}{U}{msb}{m}{n}
\newtheoremstyle{pineapple}%
  {1em}{1em}%
  {\itshape}{}%
  {\bfseries}{. ---}
  {0.5em}{}
\newtheoremstyle{durian}%
  {1em}{1em}%
  {}{}%
  {\bfseries}{. ---}
  {0.5em}{}
\def\swappedhead#1#2#3{%
  \thmnumber{\@upn{\the\thm@headfont#2\@ifnotempty{#1}{.~}}}%
  \thmname{#1}%
  \thmnote{ {\the\thm@notefont(#3)}}}
\newcommand*\rel@kern[1]{\kern#1\dimexpr\macc@kerna}
\newcommand*\widebar[1]{%
  \begingroup
  \def\mathaccent##1##2{%
    \rel@kern{0.8}%
    \overline{\rel@kern{-0.8}\macc@nucleus\rel@kern{0.2}}%
    \rel@kern{-0.2}%
  }%
  \macc@depth\@ne
  \let\math@bgroup\@empty \let\math@egroup\macc@set@skewchar
  \mathsurround\z@ \frozen@everymath{\mathgroup\macc@group\relax}%
  \macc@set@skewchar\relax
  \let\mathaccentV\macc@nested@a
  \macc@nested@a\relax111{#1}%
  \endgroup
}
\def\@sect#1#2#3#4#5#6[#7]#8{%
  \edef\@toclevel{\ifnum#2=\@m 0\else\number#2\fi}%
  \ifnum #2>\c@secnumdepth \let\@secnumber\@empty
  \else \@xp\let\@xp\@secnumber\csname the#1\endcsname\fi
  \@tempskipa #5\relax
  \ifnum #2>\c@secnumdepth
    \let\@svsec\@empty
  \else
    \refstepcounter{#1}%
    \edef\@secnumpunct{%
      \ifdim\@tempskipa>\z@ 
        \@ifnotempty{#8}{.~}%
      \else
        \@ifempty{#8}{.}{.~}%
      \fi
    }%
    \@ifempty{#8}{%
      \ifnum #2=\tw@ \def\@secnumfont{\bfseries}\fi}{}%
    \protected@edef\@svsec{%
      \ifnum#2<\@m
        \@ifundefined{#1name}{}{%
          \ignorespaces\csname #1name\endcsname\space
        }%
      \fi
      \@seccntformat{#1}%
    }%
  \fi
  \ifdim \@tempskipa>\z@ 
    \begingroup #6\relax
    \@hangfrom{\hskip #3\relax\@svsec}{\interlinepenalty\@M #8\par}%
    \endgroup
    \ifnum#2>\@m \else \@tocwrite{#1}{#8}\fi
  \else
  \def\@svsechd{#6\hskip #3\@svsec
    \@ifnotempty{#8}{\ignorespaces#8\unskip
       \@addpunct.}%
    \ifnum#2>\@m \else \@tocwrite{#1}{#8}\fi
  }%
  \fi
  \global\@nobreaktrue
  \@xsect{#5}}
\def\@seccntformat#1{%
  \protect\textup{\protect\@secnumfont
    \ifnum\pdfstrcmp{subsection}{#1}=0 \bfseries\fi
    \csname the#1\endcsname
    \protect\@secnumpunct
  }%
}
\theoremstyle{pineapple}
\newtheorem*{IntroTheorem*}{Theorem}
\newtheorem{Theorem}[subsection]{Theorem}
\newtheorem{Lemma}[subsection]{Lemma}
\newtheorem{Proposition}[subsection]{Proposition}
\newtheorem{Corollary}[subsection]{Corollary}
\theoremstyle{durian}
\tikzset{
  symbol/.style={
    draw=none,
    every to/.append style={
      edge node={node [sloped, allow upside down, auto=false]{$#1$}}}
  }
}
\setlist[1]{labelindent=\parindent}
\setlist[1]{labelsep=0.5em}
\setlist[enumerate,1]{label={\upshape (\roman*)}, ref={\upshape (\roman*)}}
\newcommand{\leqnomode}{\tagsleft@true\let\veqno\@@leqno}
\newcommand{\reqnomode}{\tagsleft@false\let\veqno\@@eqno}
\tikzset{>={Straight Barb[length=2pt,width=4pt]}, commutative diagrams/arrow style=tikz}
\let\c@equation\c@subsection
\DeclareMathOperator{\res}{res}
\DeclareMathOperator{\Spec}{Spec}
\DeclareMathOperator{\Sym}{Sym}
\DeclareMathOperator{\pr}{pr}
\DeclareMathOperator{\mult}{mult}
\newcommand{\precdot}{\prec\mathrel{\mkern-5mu}\mathrel{\cdot}}
\newcommand{\preceqdot}{\mathrel{\mathpalette\pr@ceqd@t\relax}}
\newcommand{\pr@ceqd@t}[2]{%
  \begingroup
  \sbox\z@{$#1\prec$}\sbox\tw@{$#1\preceq$}%
  \dimen@=\dimexpr\ht\tw@-\ht\z@\relax
  {\preceq}%
  \mkern-5mu
  \raisebox{\dimen@}{$\m@th#1\cdot$}%
  \endgroup
}
\newcommand*{\coloneqq}{\mathrel{\rlap{%
           \raisebox{0.3ex}{$\m@th\cdot$}}%
           \raisebox{-0.3ex}{$\m@th\cdot$}}%
           =}
\newcommand{\eqqcolon}{=%
           \mathrel{\rlap{%
           \raisebox{0.3ex}{$\m@th\cdot$}}%
           \raisebox{-0.3ex}{$\m@th\cdot$}}}
\newcommand{\punct}[1]{\makebox[0pt][l]{\,#1}} 
\newcommand{\parref}[1]{{\bf\ref{#1}}}
\DeclareMathOperator{\rank}{rank}
\DeclareMathOperator{\codim}{codim}
\newcommand{\kk}{\mathbf{k}}
\newcommand{\sO}{\mathcal{O}}
\newcommand{\smallbullet}{} 
\DeclareRobustCommand\smallbullet{%
  \mathord{\mathpalette\smallbullet@{0.75}}%
}
\newcommand{\smallbullet@}[2]{%
  \vcenter{\hbox{\scalebox{#2}{$\m@th#1\bullet$}}}%
}
\newcommand{\subsectiondash}[1]{\subsection{#1}\textbf{---}\;}
\newcommand{\PP}{\mathbf{P}}
\title{Unirationality of hypersurfaces via highly tangent lines}
\author{Raymond Cheng}
\address{SB MATH CAG
  EPFL \\
  Station 8 \\
  1015 Lausanne \\
  Switzerland 
}
\email{raymond.cheng@epfl.ch}
\begin{document}
\begin{abstract}
This article describes a unirationality construction for general low degree
complete intersections in projective space which is based on a variety of
highly tangent lines. Applied to hypersurfaces, this implies that a general
hypersurface of degree \(d \geq 6\) in projective \(n\)-space is unirational as
soon as \(n \geq  2^{(d-1)2^{d-5}}\), significantly improving classical bounds.
\end{abstract}
\maketitle
\thispagestyle{empty}
\section*{Introduction}
Complete intersections of low degree in \(\PP^n\) are, from a variety of
perspectives, simple; see \cite{Kollar:Simplest} for a lucid discussion in this
spirit. Classical results of Morin and Predonzan in \cite{Morin, Predonzan}
provide one justification from a geometric point of view: A general complete
intersection \(X \subset \PP^n\) of multi-degree \(\mathbf{d}\) over an
algebraically closed field \(\kk\) is unirational whenever
\(n \geq N(\mathbf{d})\), the bound depending only on the multi-degree; see
\cite[pp.44--46]{Roth} for a classical source, but also \cite{PS} for a concise
modern exposition, and \cite{Ramero} for an improved estimate of
\(N(\mathbf{d})\). Harris, Mazur, and Pandharipande later improve some of these
results in \cite{HMP} by carefully relating Morin's unirationality construction
to the geometry of linear spaces and show that, for \(\kk\) of characteristic \(0\),
\emph{every} smooth hypersurface \(X \subset \PP^n\) degree \(d\) is
unirational once \(n \geq N'(d)\), the original lower bound being much larger
than \(N(d)\). More recently, Beheshti and Riedl show, as
a consequence of their work on the de Jong--Debarre conjecture in \cite{BR},
that one may take \(N'(d) = 2^{d!}\); comparing the functions appearing in
\cite[Theorem 2]{Ramero} and \cite[Corollary 4.4]{BR} shows that Ramero's
estimate for \(N(d)\) is smaller, but asymptotically grows as \(2^{d!}\).

The purpose of this article is to describe an entirely different unirationality
construction for general complete intersections in \(\PP^n\) over any
algebraically closed field \(\kk\). In the case of hypersurfaces, it gives a
lower bound \(n \geq n(d)\) which is significantly smaller than any that has
come before, addressing a question of Harris--Mazur--Pandharipande in \cite[1.2.2]{HMP}:

\begin{IntroTheorem*}\label{main-theorem}
A general hypersurface of degree \(d \geq 6\) in \(\PP^n\) is unirational as
soon as \(n \geq 2^{(d-1)2^{d-5}}\).
\end{IntroTheorem*}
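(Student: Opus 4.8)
The plan is to obtain the hypersurface statement as the simplest instance of a construction for arbitrary complete intersections, run as an induction on the degree. Write \(X = V(F) \subset \PP^n\) for a general hypersurface of degree \(d\). The first ingredient is a \emph{seed}: for \(n\) in the stated range the Fano scheme of \(m\)-planes \(\Lambda \cong \PP^m\) on \(X\) is nonempty, with \(m\) taken as large as the recursion below demands. This is a dimension count on \(\Gr(m+1,n+1)\) against the space of degree-\(d\) forms on \(\PP^m\), and it is here that the size of \(n\) enters: \(m\) must exceed the ambient-dimension threshold of the residual complete intersections produced in the next step, and the count \(\binom{m+d}{d}\) turns that threshold into a requirement on \(n\) that is one exponential larger.

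Fix such a \(\Lambda \subseteq X\). For \(p \in \Lambda\) and a direction \(v\), expand \(F(sp + tv) = \sum_{i=0}^{d} s^{d-i} t^{i}\, G_{i}(p,v)\), where \(G_i\) is bihomogeneous of bidegree \((d-i,i)\); since \(p \in X\) one has \(G_{0} \equiv 0\), and the line \(\overline{pv}\) is tangent to \(X\) at \(p\) to order \(\geq d-1\) precisely when \(G_{1}(p,v) = \dots = G_{d-2}(p,v) = 0\). These equations have degrees \(1, 2, \dots, d-2\) in \(v\), so the variety \(\mathcal{T}\) of such highly tangent lines fibers over \(\Lambda\) with general fibre a complete intersection of multidegree \((2,3,\dots,d-2)\) in \(\PP^{n-2}\) — one works with the part residual to the \((m-1)\)-plane of directions along \(\Lambda\), so that a general such line meets \(X\) at \(p\) with multiplicity exactly \(d-1\) and has a genuine residual point \(q(\ell) \in X\). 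This defines a rational map \(\rho \colon \mathcal{T} \to X\) sending a highly tangent line to its residual point.

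Two things must then be checked. First, \(\rho\) is dominant: for general \(x \in X\) the lines \(\overline{xp}\) with \(p \in \Lambda\) that are tangent to \(X\) at \(p\) to order \(d-1\) cut out (after discarding the automatic vanishing \(G_0(p,x) = F(p) = 0\)) a complete intersection of multidegree \((d-1, d-2, \dots, 2)\) in \(\Lambda \cong \PP^m\), which is nonempty once \(m \geq d-2\), and for general \(x\) the tangency is exactly \(d-1\), whence \(q(\overline{xp}) = x\). Second, \(\mathcal{T}\) is unirational: its base \(\Lambda\) is rational and, for \(X\) and \(\Lambda\) general, each fibre is a complete intersection of maximal degree \(d - 2 < d\), hence unirational for \(n - 2\) large by the inductive hypothesis — but this must be applied in the form of a statement about \emph{families}, since one needs the fibrewise parametrizations to vary algebraically over \(\Lambda\) in order to conclude that the total space is unirational. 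This is what forces the induction to be carried out for total spaces of universal families, indexed by a combinatorial \emph{profile} (the set of these is denoted \(\Types\)) recording the multidegree together with the parametrizing base; the tangent-line step sends a profile of degree \(d\) to a strictly smaller one, and the recursion bottoms out at quadrics, which are rational, so no external unirationality input is needed. To keep the relevant loci of the expected dimension over fields of positive characteristic one carries \((q;\mathbf{a})\)-tics rather than ordinary complete intersections through the induction; complete intersections are the case \(q = 1\).

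The main obstacle is exactly this uniformity over families: a family of unirational varieties over a unirational base need not have unirational total space, so the construction must be genuinely constructive and compatible in families, which is the technical core of the argument and the reason it is organized through profiles rather than treated one complete intersection at a time. The remaining, purely arithmetic, task is to track the dimension threshold through the recursion from degree \(d\) to multidegree \((2,3,\dots,d-2)\) and onward: the threshold roughly squares at each stage and is calibrated so that the base case sits at \(d = 6\), which produces the bound \(n \geq 2^{(d-1)2^{d-5}}\) — the factor \(2^{d-5}\) coming from the repeated squaring down to the base case and the factor \(d-1\) from the Fano-scheme count \(\binom{m+d}{d}\sim m^{d-1}\) for the seed plane \(\Lambda\). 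For \(d \leq 5\) the residual complete intersections are too simple (a quadric bundle when \(d = 4\), a \((2,3)\)-bundle when \(d = 5\)) for this formula to be the natural output, which is why the theorem is stated for \(d \geq 6\).
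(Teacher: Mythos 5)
Your construction is essentially the paper's: lines meeting \(X\) to order \(d-1\) at points of a linear space \(\Lambda \subseteq X\), the residual-point map \(\mathcal{T} \dashrightarrow X\) shown dominant by the same count of equations of degrees \(2,\ldots,d-1\) on \(\Lambda \cong \PP^m\) with \(m \geq d-2\), an induction over multi-degrees carried out for generic families precisely to handle the family-of-unirational-varieties issue you flag (bottoming out at linear and quadric cases), and a numerical recursion in which the required plane dimension roughly squares per degree step, giving \(n(d) \le r(d)^{d-1} \le 2^{(d-1)2^{d-5}}\). The one inessential deviation is your appeal to \((q;\mathbf{a})\)-tics in positive characteristic: the paper needs no such device for this theorem, since genericity of the family---which it shows propagates through the pointed-line and penultimate-tangent constructions---already yields the expected-dimension and complete-intersection statements over any algebraically closed field.
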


This is a combination of \parref{unirationality-main-theorem} and the estimate
\parref{bounds-main-estimate}. The result stated here is a neat, but rather
coarse estimate of the bound \(n(d)\) appearing in the general
construction: For instance,
\[
n(10) =
192\;884\;152\;577\;980\;851\;363\;553\;858\;004\;926\;940\;342\;106\;493\;833\;715\;693\;762\;179
\]
which is a bit less than \(2^{197}\). In contrast, Ramero's bound gives
\(N(10) \approx 2^{171551}\). Further values of \(n(d)\) for small
degree \(d\) are given at the beginning of \S\parref{section-bounds}.

Despite the significantly smaller bound, there remains an immense gulf in the
degree ranges between unirationality and other properties of hypersurfaces. For
instance, over a field of characteristic \(0\), smooth hypersurfaces of degree
\(d\) in \(\PP^n\) are rationally connected as soon as \(n \geq d\) by
\cite{KMM, Campana}, and it is an important open question to decide whether or
not all of these are unirational. In an opposing direction, the recent
breakthroughs \cite{Schreieder, Schreieder:Torsion-order, NO} regarding
rationality of hypersurfaces show that they are stably irrational whenever
\(n \leq 2^{d+3}\). Encouragingly, however, this result does appear to narrow
the chasm between unirationality and arithmetic properties of hypersurfaces:
Birch classically showed in \cite{Birch} that a smooth degree \(d\) hypersurface
\(X \subset \PP^n\) over \(\mathbf{Q}\) satisfies the Hasse principle as soon
as \(n \geq d2^d\); Wooley later showed in \cite{Wooley} that \(X\) is locally
soluble as soon as \(n \geq d^{2^d}\), so that a general such \(X\) has many
rational points in this degree range. Although still exponentially apart,
these ranges are becoming tantalizingly similar.

In brief, the construction of Morin and Predonzan begins with linear
projection centred in an \(r\)-plane contained in the general multi-degree
\(\mathbf{d} = (d_1,\ldots,d_c)\) complete intersection \(X \subset \PP^n\),
yielding a fibration \(\widetilde{X} \to \PP^{n-r-1}\) whose generic fibre
\(X'\) is a complete intersection of multi-degree
\((d_1 - 1, \ldots, d_c - 1)\) in a \(\PP^{r+1}\). With an appropriate choice
of \(r\) and \(n\) depending on \(\mathbf{d}\), it is possible to find a
suitable base change of \(X'\) that carries a large linear space, allowing the
argument to proceed inductively. Instead, the construction here is based on the
fact that, for a general hypersurface \(X \subset \PP^n\) of degree \(d\), its
space of \emph{penultimate tangents}
\[
X' = \{
(x,[\ell]) : 
\ell \subset \PP^n\;\text{a line intersecting \(X\) at \(x\) with multiplicity \(\geq d-1\)}
\}
\]
is a family of complete intersections of multi-degree
\(\mathbf{d}' \coloneqq (d-2,d-3,\ldots,1)\) generically over \(X\);
furthermore, there is a dominant rational map \(X' \dashrightarrow X\) sending
\((x,[\ell])\) to the residual point of intersection
\(x' = X \cap \ell - (d-1)x\). Induction on the set of all multi-degrees
endowed with a suitable partial ordering implies that the general fibre of
\(X' \to X\) is unirational; restricting \(X'\) over a sufficiently large and
general linear space then implies \(X\) is unirational. Details are given in
\S\parref{section-unirationality}.

In a sense, this parameterization is even more classical than that which is
more popularly known, as it is a common generalization of unirationality
constructions for: cubics in \cite[Appendix B]{CG} and \cite[\S2]{Murre};
quartics as due to B. Segre, a refinement of which is described in
\cite[\S9]{IM}; quintics as due to Morin in \cite{Morin:Quintic}; and
Enriques's parameterization for a complete intersection of a quadric and cubic
in \cite{Enriques} is in the same spirit. One possible reason as to why this
unirationality construction has not been previously described in general is
that, even if one only wished to parameterize hypersurfaces, the inductive
argument requires one to consider all complete intersections. Happily, the
added complexity affords a surprisingly efficient parameterization; perhaps
even more interestingly, equations of the fibres of \(X' \to X\) often inherit
special structure from those of \(X\), making this construction applicable in
more general settings: see the companion paper
\cite{qatic}.

Two improvements to the results presented here are the most tantalizing: First
is to make the generality condition on \(X\) explicit---which only intervenes
in \parref{unirationality-generic-complete-intersection} to ensure
that the general fibre of \(X' \to X\) is a complete intersection. One
solution would be to establish a version of the de Jong--Debarre conjecture
regarding schemes of lines in complete intersections, as already formulated in
\cite{Canning} for instance; a result similar to that of Beheshti--Riedl in
\cite{BR} would give a bound with the same asymptotics as \(n(d)\).
Second, substantial improvements to \(n(d)\) may come about by restricting
\(X' \to X\) to subschemes other than just linear subspaces: for instance, in
his parameterization of a \((2,3)\) complete intersection \(X \subset \PP^5\),
Enriques restricts \(X'\) over a non-linear rational surface \(Y \subset X\);
see \cite[\S10.1]{FanoVarieties} for some more examples.

\medskip
\noindent\textbf{Acknowledgements. --- }%
I was partially supported by a Humboldt Postdoctoral Research Fellowship during
the preparation of this work.

\section{Unirationality construction}\label{section-unirationality}
This section presents the unirationality construction, with the result
summarized in \parref{unirationality-main-theorem}. This parameterization may
be extracted from the more general construction made in the companion paper
\cite[\S\S4--7]{qatic}, but it appears worthwhile to specialize the situation to
unadorned complete intersections in projective space, in which matters simplify
drastically. Throughout, all schemes are defined over a fixed algebraically
closed base field \(\kk\).

\subsectiondash{}\label{unirationality-explicit}
The basic constructions featuring in the unirationality construction---and also
why it is essential to formulate matters globally and in families---are best
illustrated with a degree \(d\) hypersurface
\(X \coloneqq \mathrm{V}(f) \subset \PP^n\). Fix a point \(z \in X\) and choose
coordinates \(x = (x_0:\cdots:x_n)\) so that \(z = (0:\cdots:0:1)\), at which
point the defining equation may be expanded as
\[
f(x_0,\ldots,x_{n-1},x_n)
= f_d(x_0,\ldots,x_{n-1})
+ f_{d-1}(x_0,\ldots,x_{n-1}) x_n
+ \cdots
+ f_1(x_0,\ldots,x_{n-1}) x_n^{d-1}
\]
where each \(f_i\) is homogeneous of degree \(i\) in the first \(n\) coordinates.
The space of lines in \(\PP^n\) passing through \(z\) is given by a
projective space \(\mathbf{F}_1(\PP^n;z) \cong \PP^{n-1}\) where coordinates
may be chosen so that a point \(y \coloneqq (y_0:\cdots:y_{n-1})\) corresponds
to the parameterized line \(\ell_y \subset \PP^n\) given by
\[
\varphi_y \colon \PP^1 \to \PP^n \colon
(s:t) \mapsto (y_0 \cdot s:\cdots:y_{n-1} \cdot s:t).
\]
As a subscheme of \(\ell_y\), its intersection with \(X\) is thus the zero
locus of the degree \(d\) polynomial
\[
\varphi_y^*(f)
= \sum\nolimits_{i = 1}^d f_i(y_0 \cdot s,\ldots,y_{n-1} \cdot s) t^{d-i}
= \sum\nolimits_{i = 1}^d f_i(y_0,\ldots,y_{n-1}) s^it^{d-i}.
\]
Vanishing of the \(t^d\) coefficient reflects the fact that \(\ell_y\)
and \(X\) always meet at \(z = \varphi_y(0:1)\), and those meeting \(X\) to
order \(\geq k\) at \(z\) are characterized by the
vanishing of the \(s^it^{d-i}\) coefficients for \(i < k\):
\[
\mathbf{Tan}_k(X;z)
\coloneqq \{[\ell_y] \in \mathbf{F}_1(\PP^n;z) : \mult_z(\ell_y \cap X) \geq k\}
\cong \{y \in \mathbf{P}^{n-1} : f_i(y) = 0 \;\text{for}\; i = 1,\ldots,k-1\}.
\]
For example, \(\mathbf{Tan}_1(X;z)\) is the projectivized tangent space of \(X\) at
\(z\) whereas \(\mathbf{Tan}_{d+1}(X;z) = \mathbf{F}_1(X;z)\) is the space of lines in \(X\)
passing through \(z\). Of particular interest here is the case \(k = d-1\):
\[
\mathbf{PenTa}(X;z)
\coloneqq \mathbf{Tan}_{d-1}(X;z)
= \{[\ell_y] \in \mathbf{F}_1(\PP^n;z) : \mult_z(\ell_y \cap X) \geq d-1\}.
\]
Such lines are called \emph{penultimate tangents to \(X\) at \(z\)}, and their
interest rests upon the fact that, whenever \(\ell_y \not\subseteq X\), there
is a unique \emph{residual point of intersection} given by
\(z' \coloneqq \ell_y \cap X - (d-1)z\). Whenever \(d \geq 2\), this provides a
rational map \(\res_z \colon \mathbf{PenTa}(X;z) \dashrightarrow X\) which, in
terms of the coordinates and equations above, may be explicitly described as
\begin{align*}
\res_z \colon
\{y \in \PP^{n-1} : f_i(y) = 0\;\text{for}\; i = 1,\ldots,d-2\}
& \dashrightarrow \{x \in \PP^n : \sum\nolimits_{i = 1}^d f_i(x_0,\ldots,x_{n-1})x_n^{d-i} = 0\} \\
y = (y_0:\cdots:y_{n-1}) & \longmapsto
(y_0 \cdot f_{d-1}(y): \cdots : y_{n-1} \cdot f_{d-1}(y) : -f_d(y)).
\end{align*}
Taken individually, \(\res_z\) is never dominant when \(d \geq 3\) for
dimension reasons. However, by varying \(z\) along a sufficiently large linear
space \(P \subseteq X\), these constructions often yield a dominant rational
map \(\res \colon X' \to X\) from a family \(X' \to P\) of schemes of
multi-degree \(\mathbf{d}' \coloneqq (d-2,\ldots,2,1)\) in \(\PP^{n-1}\).
Unirationality of \(X\) may then be reduced to that of the fibres of
\(X' \to P\), and this is simpler in that the maximal degree of the defining
equations in the family has dropped.

\subsectiondash{}\label{unirationality-basic}
The first half of this section carries out the constructions of
\parref{unirationality-explicit} for families of complete intersections.
To set notation and terminology, given a finite multi-set of
positive integers \(\mathbf{d} = (d_1 \leq \cdots \leq d_c)\), a \emph{family
of schemes of multi-degree \(\mathbf{d}\) in \(\PP^n\)} refers to a closed
subscheme \(\mathcal{X}\) in a \(\PP^n\)-bundle \(\pi \colon \PP\mathcal{V} \to
S\) cut out by a section \(\sigma \colon \sO_{\PP\mathcal{V}} \to \mathcal{E}\)
of a locally free \(\sO_{\PP\mathcal{V}}\)-module which, over affine open
subschemes \(U \subseteq S\), is of the form
\[
\mathcal{E}\rvert_{\rho^{-1}(U)} \cong
\bigoplus\nolimits_{i = 1}^c \sO_\rho(d_i).
\]
When \(\mathcal{X}\) is flat over \(S\) of relative dimension
\(n - c\), it is additionally called a \emph{family of
complete intersections of multi-degree \(\mathbf{d}\)}. An \emph{\(r\)-planing}
of a closed subscheme \(\mathcal{X} \subseteq \PP\mathcal{V}\) is a projective
subbundle \(\mathcal{P} \coloneqq \PP\mathcal{U}\) contained in
\(\mathcal{X}\), where \(\mathcal{U} \subseteq \mathcal{V}\) is a subbundle of
rank \(r+1\); the pair \(\mathcal{P} \subseteq \mathcal{X}\) will also be referred
to as a \emph{family of \(r\)-planed schemes}.

\subsectiondash{Pointed lines}\label{unirationality-pointed-lines}
Given a family \(\mathcal{X}\) of multi-degree \(\mathbf{d}\) schemes in a
projective bundle \(\pi \colon \PP\mathcal{V} \to S\), view its space of
pointed lines
\[
\mathcal{X}_1 \coloneqq
\{(x,[\ell]) \in \mathcal{X} \times_S \mathbf{F}_1(\mathcal{X}/S) : x \in \ell\}
\hookrightarrow \operatorname{Flag}(1,2;\mathcal{V})
\cong \PP(\mathcal{T}_\pi \otimes \sO_\pi(-1))
\]
as a closed subscheme of the space of pointed lines in \(\PP\mathcal{V}\).
Projection onto the point \(x\) identifies the latter as the projective bundle
of lines on the relative tangent bundle of \(\pi\), so \(\mathcal{X}_1\) may
also be regarded as a closed subscheme of the projective bundle over
\(\mathcal{X}\) on
\(\mathcal{T} \coloneqq \mathcal{T}_\pi \otimes
\sO_\pi(-1)\rvert_{\mathcal{X}}\), wherein \(\mathcal{X}_1\) has a canonical
structure as a family of schemes of multi-degree
\[
\mathbf{d}_1 \coloneqq (d' \in \mathbf{Z} : 0 < d' \leq d \;\text{for}\; d \in \mathbf{d}).
\]

This may be seen locally upon arguing as in \parref{unirationality-explicit}.
Globally, note that the equations of \(\mathcal{X}_1\) in all
of \(\operatorname{Flag}(1,2;\mathcal{V})\) are the pullback of those of
the relative Fano scheme of lines \(\mathbf{F}_1(\mathcal{X}/S)\) in
the Grassmannian bundle \(\mathbf{G}(2,\mathcal{V})\); that is, writing
\(\pr_x\) and \(\pr_\ell\) for the projections out of the flag variety, they
are given by the section \(\pr_\ell^*\pr_{\ell, *}\pr_x^*\sigma\). Evaluation
along \(\pr_\ell\) provides a canonical map
\[
\xi \colon \pr_\ell^*\pr_{\ell,*}\pr_x^*\mathcal{E} \to \pr_x^*\mathcal{E}
\]
with the property that
\(\xi \circ \pr_\ell^*\pr_{\ell,*}\pr_x^*\sigma = \pr_x^*\sigma\) are the equations
of \(\PP\mathcal{T}\) in \(\operatorname{Flag}(1,2;\mathcal{V})\). Thus
the restriction of \(\pr_\ell^*\pr_{\ell,*}\pr_x^*\sigma\) thereon factors
through a section
\[
\sigma_1 \colon
\sO_{\PP\mathcal{T}} \to
\mathcal{E}_1 \coloneqq
\ker(\xi \colon \pr_\ell^*\pr_{\ell,*}\pr_x^*\mathcal{E} \to \pr_x^*\mathcal{E})\rvert_{\PP\mathcal{T}}
\]
which cuts out \(\mathcal{X}_1\) in \(\PP\mathcal{T}\). To see this is of the
required form, observe that, over an affine open subscheme \(U \subseteq S\),
\(\mathcal{X}_1\) is cut out in \(\operatorname{Flag}(1,2;\mathcal{V})\) by a
section of
\[
\pr_{\ell}^*\pr_{\ell,*}\pr_x^*\mathcal{E}\rvert_{\mathbf{G}(2,\mathcal{V}) \times_S U} \cong
\bigoplus\nolimits_{d \in \mathbf{d}} \Sym^d(\mathcal{S}^\vee)\rvert_{\mathbf{G}(2,\mathcal{V}) \times_S U}
\]
where \(\mathcal{S}\) is the tautological subbundle of rank \(2\). The
tautological subbundle fits into a canonical short exact sequence with line
bundle sub and quotient which, when restricted to the projective bundle
\(\rho \colon \PP\mathcal{T} \to \mathcal{X}\), takes the form
\[
0 \to \sO_\rho(1) \to \mathcal{S}^\vee \to \rho^*\sO_\pi(1) \to 0.
\]
Over any affine open \(V \subset \PP\mathcal{V}\rvert_U \cong \PP^n_U\),
\(\sO_\pi(1)\rvert_V \cong \sO_V\) and this sequence splits, and so
the section \(\sigma_1\) defining \(\mathcal{X}_1\) takes values in a bundle of
the form
\[
\mathcal{E}_1\rvert_{\PP\mathcal{T} \times_{\PP\mathcal{V}} V} \cong
\bigoplus\nolimits_{d \in \mathbf{d}}
\ker(
\Sym^d(\sO_\rho(1) \oplus \sO_{\PP\mathcal{T}}) \to
\sO_{\PP\mathcal{T}}
)\rvert_{\PP\mathcal{T} \times_{\PP\mathcal{V}} V}
\cong
\bigoplus\nolimits_{d \in \mathbf{d}}
\bigoplus\nolimits_{0 < d' \leq d}
\sO_{\rho}(d')\rvert_{\PP\mathcal{T} \times_{\PP\mathcal{V}} V}.
\]

\subsectiondash{Penultimate tangents}\label{unirationality-penta}
Unlike the hypersurface case discussed in \parref{unirationality-explicit}, the
scheme of penultimate tangents to a complete intersection
\(X = H_1 \cap \cdots \cap H_c \subset \PP^n\) generally has several different
components, possibly of different multi-degrees, depending on how the given
line intersects each constituent hypersurface \(H_i\). One way to single out a
component is to distinguish one of the hypersurfaces containing \(X\), say
\(H_c\), and consider those lines that are contained in \(H_i\) for \(1 \leq i < c\)
and which is a penultimate tangent to \(H_c\).

To implement this with a family \(\mathcal{X}\) of multi-degree \(\mathbf{d}\)
schemes in a \(\PP^n\)-bundle \(\pi \colon \PP\mathcal{V} \to S\), observe that
the equations of \(\mathcal{X}\) in \(\PP\mathcal{V}\) are canonically filtered by
degree. In particular, for \(d_c\) the maximal degree in \(\mathbf{d}\),
\(\mathcal{E}\) carries a canonical subbundle of the form
\[
\sO_\pi(d_c) \otimes \pi^*\mathcal{M} \subseteq \mathcal{E}
\]
for the locally free \(\sO_S\)-module
\(\mathcal{M} \cong \pi_*(\mathcal{E} \otimes \sO_\pi(-d_c))\). View
\(\mu \colon \PP\mathcal{M} \to S\) as the linear system of degree \(d_c\)
hypersurfaces in \(\PP\mathcal{V}\) containing \(\mathcal{X}\). On the product
\(\PP\mathcal{T} \times_S \PP\mathcal{M}\), the tautological line
subbundle \(\sO_\mu(-1)\) induces a sequence of subbundles
\[
\pr_1^*\mathcal{E}_1 \supseteq
\ker\big(\Sym^{d_c}(\mathcal{S}^\vee) \to \rho^*\sO_\pi(d_c)\big)
\boxtimes \sO_\mu(-1)
\supseteq
\big(\mathcal{S}^\vee \otimes \sO_\rho(d_c-1)\big) \boxtimes \sO_\mu(-1)
\]
where \(\mathcal{S}\) is the restriction to \(\PP\mathcal{T}\) of the
tautological subbundle of rank \(2\) on \(\operatorname{Flag}(1,2;\mathcal{V})\):
Over points \([f] \in \PP\mathcal{M}\), local sections of the rank \(d_c\)
subbundle in the middle give the homogeneous components
\((f_{d_c},f_{d_c-1},\ldots,f_1)\) as in \parref{unirationality-explicit} which
are part of the defining equations of \(\mathcal{X}_1\) in \(\PP\mathcal{T}\).
Individual components \(f_i\) may be accessed via the filtration on
\(\Sym^{d_c}(\mathcal{S}^\vee)\) induced by the short exact sequence
displayed at the end of \parref{unirationality-pointed-lines}.
Since \(\sO_\rho(1)\) gives the fibre coordinate, the maximal degree components
\((f_{d_c}, f_{d_c-1})\) are local sections of the deepest rank \(2\)
subbundle, which takes the form
\(\mathcal{S}^\vee \otimes \sO_\rho(d_c - 1)\), giving the rightmost rank \(2\)
above. The composition
\[
\widebar{\sigma}_1 \colon
\sO \to
\widebar{\mathcal{E}}_1 \coloneqq
\pr_1^*\mathcal{E}_1/
(\mathcal{S}^\vee \otimes \sO_\rho(d_c-1)) \boxtimes \sO_\mu(-1)
\]
of \(\pr_1^*\sigma_1\) with the quotient map
\(\pr_1^*\mathcal{E}_1 \to \widebar{\mathcal{E}}_1\) therefore yields a
section whose zero locus is the desired component of penultimate tangents.

For later use, additionally assume that an \(r\)-planing
\(\mathcal{P} \subseteq \mathcal{X}\) is given. Let
\[
S' \coloneqq \mathcal{P} \times_S \PP\mathcal{M}
\;\;\text{and}\;\;
\PP\mathcal{V}' \coloneqq \PP\mathcal{T} \times_{\mathcal{X}} S' =
\PP\mathcal{T}\rvert_{\mathcal{P}} \times_S \PP\mathcal{M}
\]
so that the second projection \(\pi' \colon \PP\mathcal{V}' \to S'\) exhibits
the fibre product as the projective bundle on
\(\mathcal{V}' \coloneqq \nu^*(\mathcal{T}\rvert_{\mathcal{P}})\) where
\(\nu \colon S' \to \mathcal{P}\) is the first projection. View
\(\PP\mathcal{V}'\) as a closed subscheme of
\(\PP\mathcal{T} \times_S \PP\mathcal{M}\) and let
\(\sigma' \colon \sO_{\PP\mathcal{V}'} \to \mathcal{E}'\) be the restriction of
the section \(\widebar{\sigma}_1\) above. The \emph{scheme of penultimate tangents}
associated with \(\mathcal{X}\) over \(\mathcal{P}\) is the vanishing locus
\[
\mathcal{X}' \coloneqq
\mathbf{PenTa}(\mathcal{X})\rvert_{\mathcal{P}} \coloneqq
\mathrm{V}(\sigma' \colon \sO_{\PP\mathcal{V}'} \to \mathcal{E}') \subseteq \PP\mathcal{V}'
\]
of the section \(\sigma'\) in \(\PP\mathcal{V}'\). All of this data fits into a
commutative diagram of schemes
\[
\begin{tikzcd}
\pr_1^{-1}(\mathcal{X}_1\rvert_{\mathcal{P}}) \rar[symbol={\subseteq}] \dar &[-2em]
\mathcal{X}' \rar[symbol={\subseteq}] &[-2em]
\PP\mathcal{V}' \rar["\pi'"'] \dar["\pr_1"'] &
S' \dar["\nu"] \\
\mathcal{X}_1\rvert_{\mathcal{P}} \ar[rr,hook] &&
\PP\mathcal{T}\rvert_{\mathcal{P}} \rar["\rho"] &
\mathcal{P} \rar[symbol={\subseteq}] &[-2em]
\mathcal{X} \rar[symbol={\subseteq}] &[-2em]
\PP\mathcal{V} \rar["\pi"] &
S\punct{.}
\end{tikzcd}
\]

The defining equations \(\sigma'\) of \(\mathcal{X}'\) are essentially a subset
of the equations \(\sigma_1\) defining \(\mathcal{X}_1\), whence the
containment relation in the top left. This relationship between
\(\mathcal{X}'\) and \(\mathcal{X}_1\) further means that the
multi-degree \(\mathbf{d}_1\) structure on \(\mathcal{X}_1\) induces a
multi-degree \(\mathbf{d}'\) structure on \(\mathcal{X}'\):

\begin{Proposition}\label{unirationality-lower-degrees}
In the above setting, the scheme \(\mathcal{X}'\) of penultimate tangents
is a family of multi-degree \(\mathbf{d}'\) schemes in
\(\pi' \colon \PP\mathcal{V}' \to S'\), where
\(\mathbf{d}' \coloneqq \mathbf{d}_1 \setminus (d_c, d_c-1)\). \qed
\end{Proposition}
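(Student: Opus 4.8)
The plan is to read the local shape of the bundle \(\mathcal{E}'\) cutting out \(\mathcal{X}'\) directly off of the local description of \(\mathcal{E}_1\) obtained in \parref{unirationality-pointed-lines}, so that the claim reduces to the statement that passing from \(\mathcal{E}_1\) to \(\mathcal{E}'\) deletes precisely one summand of degree \(d_c\) and one of degree \(d_c - 1\). By the definition recalled in \parref{unirationality-basic}, being a family of multi-degree \(\mathbf{d}'\) schemes in \(\pi' \colon \PP\mathcal{V}' \to S'\) is a condition local on \(S'\); and, unwinding \parref{unirationality-penta}, one has \(\mathcal{E}' = \widebar{\mathcal{E}}_1\rvert_{\PP\mathcal{V}'}\) with \(\widebar{\mathcal{E}}_1 = \pr_1^*\mathcal{E}_1 \big/ \big((\mathcal{S}^\vee \otimes \sO_\rho(d_c-1)) \boxtimes \sO_\mu(-1)\big)\). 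It therefore suffices to work over affine opens \(U \subseteq S\) and \(V \subseteq \PP\mathcal{V}\rvert_U \cong \PP^n_U\) of the kind used at the end of \parref{unirationality-pointed-lines}---over which \(\sO_\pi(1)\) trivializes and the canonical sequence \(0 \to \sO_\rho(1) \to \mathcal{S}^\vee \to \rho^*\sO_\pi(1) \to 0\) splits---together with affine opens of the factor \(\PP\mathcal{M}\) of \(S' = \mathcal{P} \times_S \PP\mathcal{M}\) over which \(\sO_\mu(-1)\) is trivial.

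Over such opens I would fix a splitting \(\mathcal{S}^\vee \cong \sO_\rho(1) \oplus \sO_{\PP\mathcal{T}}\). This identifies \(\Sym^{d_c}(\mathcal{S}^\vee)\) with \(\bigoplus\nolimits_{j=0}^{d_c}\sO_\rho(j)\), the map \(\Sym^{d_c}(\mathcal{S}^\vee) \to \rho^*\sO_\pi(d_c)\) with the projection onto the \(j = 0\) summand, and the filtration of \(\Sym^{d_c}(\mathcal{S}^\vee)\) by powers of \(\sO_\rho(1)\) with the evident descending filtration of the direct sum. In particular the rank \(d_c\) subbundle \(\ker\big(\Sym^{d_c}(\mathcal{S}^\vee) \to \rho^*\sO_\pi(d_c)\big)\)---which, tensored with \(\sO_\mu(-1)\), is the rank \(d_c\) piece of \(\pr_1^*\mathcal{E}_1\) distinguished in \parref{unirationality-penta}---becomes \(\bigoplus\nolimits_{j=1}^{d_c}\sO_\rho(j)\), and its deepest rank \(2\) filtration step \(\mathcal{S}^\vee \otimes \sO_\rho(d_c-1)\) becomes the pair of top graded pieces \(\sO_\rho(d_c) \oplus \sO_\rho(d_c-1)\); in the coordinates of \parref{unirationality-explicit} these are the two maximal-degree components \((f_{d_c}, f_{d_c-1})\), those governing the residual point.

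Since \(\sO_\mu(-1)\) is pulled back from the factor \(\PP\mathcal{M}\) of \(S'\) and is hence trivial over our opens, forming \(\widebar{\mathcal{E}}_1\) deletes exactly one \(\sO_\rho(d_c)\)- and one \(\sO_\rho(d_c-1)\)-summand from the local model \(\bigoplus\nolimits_{d \in \mathbf{d}}\bigoplus\nolimits_{0 < d' \leq d}\sO_\rho(d')\) of \(\pr_1^*\mathcal{E}_1\); that is, \(\widebar{\mathcal{E}}_1\) is locally \(\bigoplus\nolimits_{d' \in \mathbf{d}_1 \setminus (d_c, d_c-1)}\sO_\rho(d')\). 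Restricting to \(\PP\mathcal{V}' \subseteq \PP\mathcal{T} \times_S \PP\mathcal{M}\), which by construction is the base change of \(\PP\mathcal{T}\rvert_{\mathcal{P}} = \PP(\mathcal{T}\rvert_{\mathcal{P}})\) along \(\nu \colon S' \to \mathcal{P}\), carries \(\sO_\rho(1)\) to the relative hyperplane bundle \(\sO_{\pi'}(1)\); hence \(\mathcal{E}'\) is locally isomorphic to \(\bigoplus\nolimits_{d' \in \mathbf{d}'}\sO_{\pi'}(d')\) with \(\mathbf{d}' = \mathbf{d}_1 \setminus (d_c, d_c-1)\), which by \parref{unirationality-basic} is exactly the assertion that \(\mathcal{X}' = \mathrm{V}(\sigma')\) is a family of multi-degree \(\mathbf{d}'\) schemes in \(\pi'\).

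The one step that genuinely requires care is the identification made in the second paragraph: that, after the local splitting, \((\mathcal{S}^\vee \otimes \sO_\rho(d_c-1)) \boxtimes \sO_\mu(-1)\) is not merely a rank \(2\) subbundle of \(\pr_1^*\mathcal{E}_1\) but a direct sum of two of the distinguished line-bundle summands appearing in \parref{unirationality-pointed-lines}---equivalently, that the \(\sO_\rho(1)\)-power filtration of \(\Sym^{d_c}(\mathcal{S}^\vee)\) is split by the chosen splitting, with \(\mathcal{S}^\vee \otimes \sO_\rho(d_c-1)\) as its deepest rank \(2\) step. I would settle this by writing out both sides in the trivialization, which merely reproduces the coordinate computation of \parref{unirationality-explicit} uniformly over the base; everything else is formal. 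Note that neither flatness nor any dimension count enters, since a \emph{family of multi-degree \(\mathbf{d}'\) schemes} in the sense of \parref{unirationality-basic} is only required to be cut out by a section of a bundle of the displayed local form.
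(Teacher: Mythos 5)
Your proposal is correct and coincides with what the paper intends: the Proposition is stated with an immediate \(\qed\) precisely because it follows from the local analysis in \parref{unirationality-pointed-lines}--\parref{unirationality-penta}, namely that quotienting \(\pr_1^*\mathcal{E}_1\) by \((\mathcal{S}^\vee \otimes \sO_\rho(d_c-1)) \boxtimes \sO_\mu(-1)\) deletes exactly one local summand of degree \(d_c\) and one of degree \(d_c-1\). You have simply written out in detail the same local splitting argument the paper leaves implicit, so there is nothing to add.
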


\subsectiondash{Residual point map}\label{unirationality-res}
As in the single hypersurface case in \parref{unirationality-explicit},
an important feature of the family \(\mathcal{X}' \to S'\) of penultimate
tangents associated with \(\mathcal{P} \subseteq \mathcal{X}\) is that it
carries a residual point map
\[
\res \colon
\mathcal{X}' \setminus \pr_1^{-1}(\mathcal{X}_1\rvert_{\mathcal{P}}) \to
\mathcal{X}
\]
defined away from the locus of lines completely contained in \(\mathcal{X}\).
To construct this map globally, continue with the notation in
\parref{unirationality-penta}, and observe that the section
\(\pr_1^*\sigma_1 \colon \sO_{\PP\mathcal{V}'} \to \pr_1^*\mathcal{E}_1\)
defining \(\mathcal{X}_1\) in \(\PP\mathcal{T}\) pulled back to
\(\mathcal{X}'\) factors through a section giving the
remaining components \((f_{d_c},f_{d_c-1})\)
\[
\tau \colon
\sO_{\mathcal{X}'} \to
(\mathcal{S}^\vee \otimes \sO_\rho(d_c - 1)) \boxtimes \sO_\mu(-1).
\]
At a point of \(\mathcal{X}'\) corresponding to a penultimate tangent
\(\ell \subset \PP^n\) at a point \(z\) in a multi-degree \(\mathbf{d}\) scheme
\(X \subseteq \PP^n\) contained in a distinguished degree \(d_c\) hypersurface
\(H \supseteq X\), this section gives the degree \(d_c\) polynomial on \(\ell\)
defining \(\ell \cap H\). Twisting by
\(\sO_\rho(1 - d_c) \boxtimes \sO_\mu(1)\) factors out the \((d_c-1)\)-fold
zero at \(z \in \ell \cap H\), at which point \(\tau\) may be viewed as a
family of linear forms on lines in the ambient projective bundle
\(\PP\mathcal{V}\). Composing \(\tau\) with the wedge product isomorphism
\(\mathcal{S}^\vee \cong \mathcal{S} \otimes \sO_\rho(1) \otimes \rho^*\sO_\pi(1)\),
which sends a linear form to its zero locus, yields a section
\[
\tau' \colon
\big(\sO_\rho(-d_c) \otimes \rho^*\sO_\pi(1)\big) \boxtimes \sO_\mu(1) \to
\pr_1^*\mathcal{S}
\]
whose value at a point of \(\mathcal{X}'\) as above is thus the residual point
of intersection between \(\ell\) and \(H\). Finally, including
\(\pr_1^*\mathcal{S}\) into the pullback of \(\mathcal{V}\) provides a map to
\(\mathcal{X} \subseteq \PP\mathcal{V}\) which is defined at points where
\(\tau'\) does not vanish which, from the description so far, are points of
\(\mathcal{X}'\) where \(\ell \not\subseteq H\): that is, this map is
defined away from the locus
\(\pr_1^{-1}(\mathcal{X}_1\rvert_{\mathcal{P}}) \subseteq \mathcal{X}'\).

The next statement ensures that \(\res\) may be viewed as a rational map on
\(\mathcal{X}'\) whenever the family \(\mathcal{X}_1\) of pointed lines has its
expected relative dimension
\(\dim\PP\mathcal{T} - \rank\mathcal{E}_1 = n - 1 - \sum\nolimits_{d \in \mathbf{d}} d\),
over \(\mathcal{P}\), notation as in \parref{unirationality-pointed-lines}, and
shows furthermore that it shall be dominant once \(r\) is sufficiently large
compared to \(\mathbf{d}\). Below, a property is said to hold \emph{fibrewise}
in a family over \(S\) if the property holds upon restriction to each closed
point of \(S\).

\begin{Proposition}\label{unirationality-res-dominant}
Let \(\mathcal{P} \subseteq \mathcal{X}\) be a family of \(r\)-planed
multi-degree \(\mathbf{d}\) complete intersections over \(S\). If
\(\mathcal{X}_1\rvert_{\mathcal{P}}\) has its expected dimension
\(n+r-1-\sum\nolimits_{d \in \mathbf{d}} d\) fibrewise over \(S\) and
\[
r \geq
r_0(\mathbf{d})
\coloneqq \sum\nolimits_{d \in \mathbf{d}} (d - 1) - 1
= \sum\nolimits_{d \in \mathbf{d}} d - c - 1,
\]
then the residual point map
\(\res \colon \mathcal{X}' \dashrightarrow \mathcal{X}\) exists and is dominant
fibrewise over \(S\).
\end{Proposition}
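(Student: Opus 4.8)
The plan is to argue fibrewise over $S$: fix a closed point $s\in S$, write $X\coloneqq\mathcal{X}_s$ and $P\coloneqq\mathcal{P}_s\cong\PP^r$, and drop subscripts from the other objects; since $\mathcal{X}$ is a family of complete intersections, $X$ is pure of dimension $n-c$. Put $\Sigma\coloneqq\sum_{d\in\mathbf{d}}d$ and let $m$ be the multiplicity of $d_c$ in $\mathbf{d}$, so that $\PP\mathcal{M}\to S$ has fibre dimension $m-1$, $\rank\mathcal{E}'=\lvert\mathbf{d}'\rvert=\Sigma-2$, and $r_0(\mathbf{d})=\Sigma-c-1$. For the existence claim, recall from \parref{unirationality-res} that $\res$ is already a morphism on the open $\mathcal{X}'\setminus\pr_1^{-1}(\mathcal{X}_1\rvert_{\mathcal{P}})$, so it suffices to see that this open meets every component of $\mathcal{X}'_s$. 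Since $\pr_1^{-1}(\mathcal{X}_1\rvert_{\mathcal{P}})=\mathcal{X}_1\rvert_{\mathcal{P}}\times_S\PP\mathcal{M}$, the dimension hypothesis gives $\dim\pr_1^{-1}(\mathcal{X}_1\rvert_{\mathcal{P}})_s=(n+r-1-\Sigma)+(m-1)=n+r+m-2-\Sigma$; but $\mathcal{X}'_s$ is cut out of the $(n+r+m-2)$-dimensional $\PP\mathcal{V}'_s$ by the $\Sigma-2$ components of $\sigma'$, so by Krull's Hauptidealsatz every component of $\mathcal{X}'_s$ has dimension at least $n+r+m-\Sigma$, strictly larger. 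Hence $\res$ exists as a rational map, fibrewise.

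For dominance, fix a general point $x\in X$. The subvariety $P$ is proper in $X$ under our hypotheses — were $r=n-c$, then $P$ would be a linear component of $X$, and already the pointed lines lying inside $P$ would contradict the dimension of $\mathcal{X}_1\rvert_{\mathcal{P}}$ unless every $d_i=1$, in which case the construction is vacuous — so $x\notin P$ and each $z\in P$ spans a line $\ell_z\coloneqq\overline{zx}$. Choose $[\lambda_0]\in\PP\mathcal{M}_s$ to be a coordinate point, so that the associated degree-$d_c$ hypersurface $H_0$ is one of the $c$ defining hypersurfaces of $X$ and the remaining $c-1$ cut out a subscheme $X^\flat$ with $X=X^\flat\cap H_0$. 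By the local description of the equations of $\mathcal{X}'$ obtained by combining \parref{unirationality-explicit} with \parref{unirationality-pointed-lines} and \parref{unirationality-penta}, whenever $z\in P$ satisfies $\ell_z\subseteq X^\flat$ and $\mult_z(\ell_z\cap H_0)\geq d_c-1$ the triple $(z,[\ell_z],[\lambda_0])$ lies in $\mathcal{X}'_s$; if in addition $\ell_z\not\subseteq H_0$ — equivalently, since $\ell_z\subseteq X^\flat$, if $\ell_z\not\subseteq X$ — then this point lies off $\pr_1^{-1}(\mathcal{X}_1\rvert_{\mathcal{P}})$, and as $H_0$ restricts on $\ell_z$ to a nonzero binary form of degree $d_c$ with a zero of order $\geq d_c-1$ at $z$ and a zero at $x\neq z$, necessarily $\ell_z\cap H_0=(d_c-1)z+x$ and therefore $\res(z,[\ell_z],[\lambda_0])=x$. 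It remains to produce such a $z$. As $z,x\in X\subseteq X^\flat$ and $z\neq x$, each defining form of $X^\flat$ restricts on $\ell_z$ to a binary form already vanishing at the two points $z,x$, so $\ell_z\subseteq X^\flat$ amounts to $\sum_{d\in\mathbf{d}\setminus(d_c)}(d-1)=\Sigma-c-(d_c-1)$ homogeneous conditions on $z\in P\cong\PP^r$, and $\mult_z(\ell_z\cap H_0)\geq d_c-1$, given $z\in H_0$, amounts to $d_c-2$ further ones; so the admissible locus $Z_x\subseteq\PP^r$ is the common zero scheme of $\Sigma-c-1=r_0(\mathbf{d})$ homogeneous forms, hence nonempty of dimension $\geq r-r_0(\mathbf{d})\geq 0$, the bound $r\geq r_0(\mathbf{d})$ entering here.

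To conclude I must find, for general $x$, some $z\in Z_x$ with $\ell_z\not\subseteq X$. If this failed, then on a dense open $U$ of some component $X^{(0)}$ of $X$ every $z\in Z_x$ would have $\ell_z\subseteq X$, and the incidence $I\coloneqq\{(x,z):x\in U,\ z\in Z_x\}$ would map to $\mathcal{X}_1\rvert_{\mathcal{P},s}$ via $(x,z)\mapsto(z,[\ell_z])$ with fibres of dimension $\leq 1$, giving $\dim I\leq\dim\mathcal{X}_1\rvert_{\mathcal{P},s}+1=n+r-\Sigma$ by the hypothesis; but $I$ also surjects onto $U$ with fibres $Z_x$ of dimension $\geq r-r_0(\mathbf{d})$, so $\dim I\geq(n-c)+(r-r_0(\mathbf{d}))=n+r-\Sigma+1$, a contradiction. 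Hence $\res(\mathcal{X}'_s)$ contains a dense subset of $X$, so $\res$ is dominant fibrewise.

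The crux is the condition count in the second paragraph: showing, from the local equations of \parref{unirationality-explicit}, that imposing $\ell_z\subseteq X^\flat$ together with penultimate tangency to $H_0$ at $z$ translates into precisely $r_0(\mathbf{d})$ homogeneous equations on $z\in\PP^r$ — this is what fixes the value of $r_0(\mathbf{d})$ and makes the hypothesis $r\geq r_0(\mathbf{d})$ the right one. The dimension estimates underlying existence and the degeneracy bound are then forced by the hypothesis on $\mathcal{X}_1\rvert_{\mathcal{P}}$ and Hauptidealsatz lower bounds, and are routine.
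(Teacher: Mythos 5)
Your proposal is correct and follows essentially the same route as the paper: existence via the dimension count showing the indeterminacy locus $\pr_1^{-1}(\mathcal{X}_1\rvert_{\mathcal{P}})$ cannot contain a component of $\mathcal{X}'$ fibrewise, and dominance by fixing a distinguished degree-$d_c$ hypersurface, bounding $\dim Z_x \geq r - r_0(\mathbf{d})$ by counting the homogeneous conditions on $z \in P \cong \PP^r$ (exactly the paper's Lemma \parref{unirational-res-dominant-absolute.estimate}, phrased with forms instead of line-bundle sections on the blowup), and ruling out the all-lines-in-$X$ alternative by the same incidence-dimension contradiction with the expected dimension of $\mathcal{X}_1\rvert_{\mathcal{P}}$. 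The only differences are presentational (Hauptidealsatz in place of "omitting two ample divisors", and an explicit remark that a general $x$ lies off $P$).
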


\begin{proof}
Fibrewise over \(S\), \(\mathcal{X}'\) is obtained from \(\mathcal{X}_1\) by
omitting two relatively ample divisors, so the hypothesis that
\(\mathcal{X}_1\) is of expected dimension implies the same for
\(\mathcal{X}'\), and it follows from the analysis of
\parref{unirationality-res} that the residual point map
\(\res \colon \mathcal{X}' \dashrightarrow \mathcal{X}\) is defined on a
dense open, and even that its indeterminacy locus does not contain any fibre
over \(S\). Therefore, since the statement is fibrewise over \(S\), it
suffices to consider the case \(S = \Spec \kk\). For the remainder of
the proof, let \(P \subseteq X\) be an \(r\)-planed complete intersection of
multi-degree \(\mathbf{d} = (d_1 \leq \cdots \leq d_c)\) in \(\PP^n\) over
\(\kk\).

Fix a presentation \(X = H_1 \cap \cdots \cap H_c\) where \(H_i\)
is a hypersurface of degree \(d_i\). Let
\[
X' \coloneqq
\{(z,[\ell]) \in P \times \mathbf{F}_1(\PP^n) :
z \in \ell,\;
\mult_z(\ell \cap H_c) \geq d_c - 1,
\;\text{and}\;
\ell \subset H_i
\;\text{for}\;1 \leq i \leq c-1\}
\]
be the scheme of penultimate tangents over \(P\) with respect to \(H_c\);
namely, this is the restriction of \(\mathcal{X}'\) from
\parref{unirationality-penta} over the closed subscheme
\(P \times \{H_c\} \subseteq S'\) so that the distinguished degree \(d_c\)
hypersurface is precisely \(H_c\). It suffices to show that the restricted
residual point map \(\res \colon X' \dashrightarrow X\) is dominant. Toward
this, consider, for each point \(y \in X \setminus P\), the locus
\[
Z_y \coloneqq
\{z \in P :
\mult_z(\ell_{y,z} \cap H_c) \geq d_c - 1\;\text{and}\;
\ell_{y,z} \subset H_i \;\text{for}\; 1 \leq i \leq c-1
\}
\]
where \(\ell_{y,z}\) is the line between \(y\) and \(z\). Then \(\res\)
is dominant if and only if the open subset \(Z_y^\circ \subseteq Z_y\)
parameterizing lines intersecting \(H_c\) at \(z\) with multiplicity exactly
\(d_c - 1\) is nonempty for general \(y\). Observe first that the hypothesis on
\(r\) ensures that each \(Z_y\) is non-empty; in fact, the following gives a
dimension estimate by exhibiting it as an intersection of ample divisors in
\(P\):

\begin{Lemma}\label{unirational-res-dominant-absolute.estimate}
\(\displaystyle \dim Z_y \geq r - r_0(\mathbf{d})\) for all \(y \in X \setminus P\).
\end{Lemma}

\begin{proof}
Identify \(P\) with the space of lines through \(y\) in the \((r+1)\)-plane
\(P_y \coloneqq \langle y, P\rangle\). Linear
projection centred of \(P_y\) at \(y\) may then be viewed as a rational map
\(P_y \dashrightarrow P\). Resolve this into a morphism
\(a \colon \widetilde{P}_y \to P\) on the blowup
\(b \colon \widetilde{P}_y \to P_y\) at \(y\), whereon \(a\) exhibits
\(\widetilde{P}_y\) as the projective bundle on
\[
\mathcal{E} \cong \sO_P \oplus \sO_P(-1) \subseteq
\sO_P \otimes \mathrm{H}^0(P_y, \sO_{P_y}(1))^\vee
\]
in which \(\sO_P\) corresponds to the point \(y \in P_y\) and \(\sO_P(-1)\) is the
tautological line subbundle in the subspace corresponding to \(P \subset P_y\).
For each \(i = 1,\ldots,c\), let \(H_{i,y} \coloneqq H_i \cap P_y\) and observe
that its total transform \(b^{-1}(H_{i,y})\) is a family of degree \(d_i\)
schemes over \(P\) defined in \(\widetilde{P}_y\) by a section
\[
\sigma_i \colon \sO_P \to
\Sym^{d_i}(\mathcal{E}^\vee) \cong
\Sym^{d_i}(\sO_P \oplus \sO_P(1)).
\]
Each line bundle summand corresponds to a coefficient of the equation of
\(H_{i,y}\) restricted to the line \(\ell_{y,z} = \PP\mathcal{E}_z\) as a
function of \(z \in P\); thus \(Z_{i,y} \coloneqq \mathrm{V}(\sigma_i)\)
parameterizes points \(z \in P\) for which \(\ell_{y.z} \subset H_{i,y}\).
Some components of \(\sigma_i\) vanish for \emph{a priori} reasons:
Write \((s:t)\) for local fibre coordinates of \(\PP\mathcal{E}\) so that
\(z = (0:1)\) and \(y = (1:0)\) on \(\ell_{y,z} = \PP\mathcal{E}_z\). Since
\(\ell_{y,z}\) intersects \(H_{i,y}\) at both \(y\) and \(z\), the coefficients
of \(t^{d_i}\) and \(s^{d_i}\) vanish, and so
\[
\codim(Z_{i,y} \subseteq P)
\leq \rank\Sym^{d_i}(\mathcal{E}^\vee) - 2 = d_i - 1.
\]

The condition on \(H_{c,y}\) requires only that \(\ell_{y,z}\) intersect it at
\(z\) with multiplicity \(d_c - 1\), meaning that the scheme of interest is,
rather than \(Z_{c,y}\), the potentially larger locus
\[
Z_{c,y}' \coloneqq \{z \in P : \mult_z(\ell_{y,z} \cap H_{c,y}) \geq d_c - 1\}.
\]
This is cut out by the vanishing of all components of \(\sigma_c\) other than
that corresponding to the coefficient of \(s^{d_c - 1}t\), so
\(\codim(Z_{c,y}' \subseteq P) \leq d_c - 2\).  Since
\(Z_y = Z_{1,y} \cap \cdots \cap Z_{c-1,y} \cap Z_{c,y}'\), the estimates give
\begin{align*}
\dim Z_y
& = \dim P - \codim(Z_y \subseteq P) \\
& \geq \dim P - \sum\nolimits_{i = 1}^{c-1} \codim(Z_{i,y} \subseteq P) - \codim(Z_{c,y}' \subseteq P) \\
& \geq r - \Big(\sum\nolimits_{i = 1}^c (d_i-1) - 1\Big)
= r - r_0(\mathbf{d}).
\qedhere
\end{align*}
\end{proof}

Observe now that \(Z_y^\circ = \varnothing\) if and only if, for
each \(z \in Z_y\), the line \(\ell_{y,z}\) intersect \(H_c\) with multiplicity
at least \(d_c\) at \(z\). Since \(\ell_{y,z}\) also intersects \(H_c\) at
\(y\), this implies that \(\ell_{y,z}\) is contained in \(H_c\), whence also
\(X\). Therefore, if
\(Z_y^\circ = \varnothing\) for general \(y \in X \setminus P\), then there
is a rational map
\[
\{(y,z) \in (X \setminus P) \times P : z \in Z_y\} \dashrightarrow X_1\rvert_P \colon
(y,z) \mapsto (z,[\ell_{y,z}]).
\]
Fibres of this map are contained in the points of the lines \(\ell_{y,z}\) and
so have dimension at most \(1\). Combined with the dimension estimate
\parref{unirational-res-dominant-absolute.estimate}, this gives
\[
\dim X_1\rvert_P \geq
\dim X + \dim Z_y - 1 \geq
n+r - \sum\nolimits_{i = 1}^c d_i.
\]
Comparing with the expected dimension hypothesis on \(X_1\rvert_P\) yields a
contradiction. Therefore \(Z_y^\circ \neq \varnothing\) for general
\(y \in X\), meaning that \(\res \colon X' \dashrightarrow X\) is dominant.
\end{proof}

\subsectiondash{Generic families}\label{unirationality-genericity}
One way to verify the expected dimension hypothesis of
\parref{unirationality-res-dominant} is to ensure that the given family
\(\mathcal{P} \subseteq \mathcal{X}\) of \(r\)-planed multi-degree
\(\mathbf{d}\) schemes is sufficiently generic. To make sense of this,
suppose that it is defined a \(\PP^n\)-bundle over an integral base \(S\).
Trivializing the family over a dense open \(S^\circ \subseteq S\) provides a
classifying morphism from \(S^\circ\) to the parameter space
\[
\mathbf{Inc}_{n,r,\mathbf{d}} \coloneqq
\{([U], [\sigma]) \in \mathbf{G}(r+1,n+1) \times \mathbf{H}_{n,\mathbf{d}} : \PP U \subseteq X_\sigma \subseteq \PP^n\}
\]
where
\(\mathbf{H}_{n,\mathbf{d}} \coloneqq \prod\nolimits_{d \in \mathbf{d}} \PP\mathrm{H}^0(\PP^n,\sO_{\PP^n}(d))\)
is the parameter space for schemes of multi-degree \(\mathbf{d}\) in \(\PP^n\).
View such a classifying morphism as a rational map
\([\mathcal{P} \subseteq \mathcal{X}] \colon S \dashrightarrow \mathbf{Inc}_{n,r,\mathbf{d}}\).
Different choices of trivialization result in classifying maps that differ by
linear automorphisms of the target, so whether or not the map is dominant is
independent of any choices. As such, it makes sense to call the family
\(\mathcal{P} \subseteq \mathcal{X}\) \emph{generic} if any such classifying
map is dominant.

The next statement shows that genericity propagates along the pointed
lines and penultimate tangents constructions. For this, observe that the
given \(r\)-planing \(\mathcal{P} \subseteq \mathcal{X}\) induces canonical
\((r-1)\)-planings
\(\mathcal{P}_1 \subseteq \mathcal{X}_1\rvert_{\mathcal{P}}\) and
\(\mathcal{P}' \subseteq \mathcal{X}'\) of the families of pointed lines and of
penultimate tangents, respectively. For pointed lines, note that the twisted
tangent bundle of \(\mathcal{P}\) provides a rank \(r\) subbundle
\[
\mathcal{T}_{\mathcal{P}} \otimes \sO_\pi(-1) \subseteq
\mathcal{T}_{\PP\mathcal{V}} \otimes \sO_\pi(-1)\rvert_{\mathcal{P}} =
\mathcal{T}\rvert_{\mathcal{P}}
\]
whose associated projective bundle \(\mathcal{P}_1\) is contained in
\(\mathcal{X}_1\rvert_{\mathcal{P}}\); geometrically, \(\mathcal{P}_1\)
parameterizes pointed lines \((x,[\ell])\) where \(\ell \subseteq \mathcal{P}\).
For penultimate tangents, take
\(\mathcal{P}' \coloneqq \mathcal{P}_1 \times_{\mathcal{P}} S'\) and observe
that 
\(\pr_1^{-1}(\mathcal{X}_1\rvert_{\mathcal{P}}) \subseteq \mathcal{X}'\)
as in the diagram preceding \parref{unirationality-lower-degrees}. With
this structure, the statement is:

\begin{Proposition}\label{unirationality-inherit-genericity}
Let \(\mathcal{P} \subseteq \mathcal{X}\) be a generic \(r\)-planed family of
multi-degree \(\mathbf{d}\) schemes in a \(\PP^n\)-bundle
\(\pi \colon \PP\mathcal{V} \to S\) over an integral base \(S\). Then the
associated families
\(\mathcal{P}_1 \subseteq \mathcal{X}_1\rvert_{\mathcal{P}}\) of pointed
lines over \(\mathcal{P}\) and \(\mathcal{P}' \subseteq \mathcal{X}'\) of
penultimate tangents over \(S' \coloneqq \mathcal{P} \times_S \PP\mathcal{M}\)
are also generic.
\end{Proposition}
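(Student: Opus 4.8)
The plan is to reduce, for each of the two constructions, to a dominance statement about a single ``universal'' incidence variety, which is then verified directly with the explicit coordinate description of \parref{unirationality-explicit}. Recall first that genericity of $\mathcal{P}\subseteq\mathcal{X}$ means a classifying map $c_0\colon S\dashrightarrow\mathbf{Inc}_{n,r,\mathbf{d}}$ is dominant. Introduce the auxiliary spaces $\mathbf{Inc}^{+}\coloneqq\{(x,[U],[\sigma]):x\in\PP U\subseteq X_\sigma\}$, a $\PP^r$-bundle over $\mathbf{Inc}_{n,r,\mathbf{d}}$, and $\mathbf{Inc}^{++}$, the further bundle over $\mathbf{Inc}^{+}$ whose fibre over $(x,[U],[\sigma])$ is the linear system $\PP\mathcal{M}$ of degree-$d_c$ hypersurfaces through $X_\sigma$ arising as in \parref{unirationality-penta}; both are irreducible. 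Since the pointed-lines and penultimate-tangents constructions of \parref{unirationality-pointed-lines} and \parref{unirationality-penta} are assembled only from pullbacks, pushforwards along the relevant flag- and projective-bundle projections, and kernels or quotients of canonical maps of vector bundles, they commute with base change; so, after trivializing as in \parref{unirationality-genericity}, the classifying map of $\mathcal{P}_1\subseteq\mathcal{X}_1\rvert_{\mathcal{P}}$ factors as a composite $\mathcal{P}\dashrightarrow\mathbf{Inc}^{+}\xrightarrow{u}\mathbf{Inc}_{n-1,r-1,\mathbf{d}_1}$ with $u(x,[U],[\sigma])=\big([\mathbf{F}_1(\PP U;x)],[\mathbf{F}_1(X_\sigma;x)]\big)$, and that of $\mathcal{P}'\subseteq\mathcal{X}'$ as $S'\dashrightarrow\mathbf{Inc}^{++}\xrightarrow{u'}\mathbf{Inc}_{n-1,r-1,\mathbf{d}'}$, where $u'$ additionally remembers the distinguished degree-$d_c$ hypersurface and forms the corresponding penultimate-tangents scheme. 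Over a general $s\in S$ the first leg of either composite restricts to the identification of the fibre of $\mathcal{P}\to S$ (resp.\ $S'\to S$) over $s$ with the fibre of $\mathbf{Inc}^{+}\to\mathbf{Inc}_{n,r,\mathbf{d}}$ (resp.\ $\mathbf{Inc}^{++}\to\mathbf{Inc}_{n,r,\mathbf{d}}$) over $c_0(s)$; as $c_0$ is dominant, these images cover the preimage of a dense subset, so the first legs are dominant. It therefore suffices to show $u$ and $u'$ are dominant. (One may assume $r<n$, the case $r\geq n$ being vacuous.)

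For $u$: given a general $([\Lambda],[W])\in\mathbf{Inc}_{n-1,r-1,\mathbf{d}_1}$, choose coordinates on $\PP^n$ with $x=(0:\cdots:0:1)$ and identify $\PP^{n-1}=\mathbf{F}_1(\PP^n;x)$ by $y\mapsto\ell_y$ as in \parref{unirationality-explicit}. Grouping the equations of $W$ by the blocks of $\mathbf{d}_1=\bigsqcup_{d\in\mathbf{d}}(1,\dots,d)$ gives, for each $d_j\in\mathbf{d}$, forms $g^{(j)}_1,\dots,g^{(j)}_{d_j}$ of degrees $1,\dots,d_j$ in $x_0,\dots,x_{n-1}$; set $\sigma_j\coloneqq\sum_{i=1}^{d_j}g^{(j)}_i\,x_n^{d_j-i}$ and $X_\sigma\coloneqq\bigcap_j\mathrm{V}(\sigma_j)$, so that $x\in X_\sigma$ and $\mathbf{F}_1(X_\sigma;x)=W$ by \parref{unirationality-explicit}. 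Taking $\PP U\coloneqq\langle x,\Lambda\rangle$ (so $\mathbf{F}_1(\PP U;x)=\Lambda$), the containment $\Lambda\subseteq W$ gives $\sigma_j\rvert_{\ell_y}=\sum_i g^{(j)}_i(y)s^it^{d_j-i}=0$ for every $y\in\Lambda$, hence $\sigma_j\rvert_{\PP U}=0$ and $\PP U\subseteq X_\sigma$. Thus $(x,[U],[\sigma])\in\mathbf{Inc}^{+}$ lies over $([\Lambda],[W])$, so $u$ is dominant.

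For $u'$ the construction is the same, except that the penultimate-tangents scheme discards the two top-degree components of the distinguished hypersurface, and these can be re-purposed to keep $\PP U$ inside $X_\sigma$. Writing $\mathbf{d}'=(1,\dots,d_c-2)\sqcup\bigsqcup_{j<c}(1,\dots,d_j)$ and grouping the equations of a general $W'$ accordingly into $g^{(c)}_1,\dots,g^{(c)}_{d_c-2}$ and $g^{(j)}_1,\dots,g^{(j)}_{d_j}$ for $j<c$, put $\sigma_j\coloneqq\sum_{i=1}^{d_j}g^{(j)}_i\,x_n^{d_j-i}$ for $j<c$ and $\sigma_c\coloneqq\sum_{i=1}^{d_c-2}g^{(c)}_i\,x_n^{d_c-i}+h_{d_c-1}x_n+h_{d_c}$, where $h_{d_c-1},h_{d_c}$ are forms of degrees $d_c-1,d_c$ chosen to vanish on $\Lambda$ — possible because $\dim\Lambda=r-1<n-1$. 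With $x=(0:\cdots:0:1)$, $\PP U\coloneqq\langle x,\Lambda\rangle$, $X_\sigma\coloneqq\bigcap_j\mathrm{V}(\sigma_j)$, and $H\coloneqq\mathrm{V}(\sigma_c)$ as distinguished hypersurface, the analysis of \parref{unirationality-penta} identifies the penultimate-tangents scheme of $X_\sigma$ over $x$ relative to $H$ with $\mathrm{V}\big(g^{(c)}_1,\dots,g^{(c)}_{d_c-2},\,(g^{(j)}_i)_{j<c}\big)=W'$, while $\Lambda$ lies in the common zero locus of all of $g^{(c)}_1,\dots,g^{(c)}_{d_c-2},h_{d_c-1},h_{d_c}$ and the $g^{(j)}_i$, so $\sigma_j\rvert_{\PP U}=0$ for every $j$ and hence $\PP U\subseteq X_\sigma$. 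Thus $(x,[U],[\sigma],[H])\in\mathbf{Inc}^{++}$ lies over $([\Lambda],[W'])$, and $u'$ is dominant. The one point that needs genuine care is precisely this splitting of the equations for $u'$ — that the bookkeeping $\mathbf{d}'=\mathbf{d}_1\setminus(d_c,d_c-1)$ leaves exactly enough freedom in the distinguished hypersurface to absorb the plane $\PP U$; everything else is a transcription of \parref{unirationality-explicit}, and the reduction in the first paragraph is soft provided one is careful that the constructions genuinely commute with base change.
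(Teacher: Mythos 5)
Your proof is correct and takes essentially the same route as the paper: reduce to the universal incidence family (your $\mathbf{Inc}^{+}$, resp.\ $\mathbf{Inc}^{++}$, is exactly the tautological $\mathcal{P}$, resp.\ $S'$, over $\mathbf{Inc}_{n,r,\mathbf{d}}$ to which the paper passes by base change), then verify dominance via the coordinate expansion of \parref{unirationality-explicit}. Your direct construction of preimages, with the two discarded components $h_{d_c-1},h_{d_c}$ chosen to vanish on $\Lambda$, is the same freedom the paper packages as the surjective projection $\mathbf{Inc}_{r-1,n-1,\mathbf{d}_1}\to\mathbf{Inc}_{r-1,n-1,\mathbf{d}'}$ with bi-projective fibres of pairs $(g_{d_c},g_{d_c-1})$ vanishing on $\PP\widebar{U}$.
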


\begin{proof}
Locally on \(S\), the family \(\mathcal{P} \subseteq \mathcal{X}\) is pulled
back from the tautological family over \(\mathbf{Inc}_{r,n,\mathbf{d}}\).
The constructions of the family of pointed lines and penultimate tangents
from \parref{unirationality-pointed-lines} and \parref{unirationality-penta} are
invariant, so commute with base change, meaning that the two families
\(\mathcal{P}_1 \subseteq \mathcal{X}_1\rvert_{\mathcal{P}}\) and
\(\mathcal{P}' \subseteq \mathcal{X}'\) are also pulled back from their
counterparts over the parameter space. Therefore it suffices to consider
the universal case where \(S = \mathbf{Inc}_{r,n,\mathbf{d}}\) and
\(\mathcal{P} \subseteq \mathcal{X} \subseteq \PP^n \times S\) is the
tautological family.

For the classifying map
\([\mathcal{P}_1 \subseteq \mathcal{X}_1\rvert_{\mathcal{P}}] \colon
\mathcal{P} \dashrightarrow \mathbf{Inc}_{r-1,n-1,\mathbf{d}_1}\)
associated with the family of pointed lines, fix a closed point
\(z \coloneqq \PP L \in \PP^n\) and consider the space
\[
\mathcal{P}_z \coloneqq
\{([U],[\sigma]) \in \mathbf{G}(r+1,n+1) \times \mathbf{H}_{n,\mathbf{d}}
: z \in \PP U \subseteq X_\sigma \subseteq \PP^n\},
\]
viewed as a closed subvariety of \(\mathcal{P}\), parameterizing pairs
containing \(z\). For a general choice of \(z\), the classifying
map is defined on an open subscheme of \(\mathcal{P}_z\), and its
restriction thereon may be explicitly described as follows: Choosing
coordinates \(x = (x_0:\cdots:x_n)\) on \(\PP^n\) and \(y = (y_0:\cdots:y_{n-1})\)
on its space of lines \(\mathbf{F}_1(\PP^n;z) \cong \PP^{n-1}\) through \(z = (0:\cdots:0:1)\),
as in \parref{unirationality-explicit}, and expanding each component
\(\sigma = (f_d : d \in \mathbf{d})\) of the defining equations as
\[
f_d(x_0,\ldots,x_n) =
f_{d,d}(x_0,\ldots,x_{n-1}) + f_{d,d-1}(x_0,\ldots,x_{n-1}) x_n + \cdots + f_{d,1}(x_0,\ldots,x_{n-1}) x_n^{d-1}
\]
for \(f_{d,d'}\) homogeneous of degree \(d'\) in the first \(n\)
coordinates, the classifying map takes the form
\[
\mathcal{P}_z \dashrightarrow \mathbf{Inc}_{r-1,n-1,\mathbf{d}_1} \colon
([U], [(f_d : d \in \mathbf{d})]) \mapsto
([U/L], [(f_{d,d'} : d \in \mathbf{d}, 0 \leq d' \leq d)]).
\]
Since the scheme of \(r\)-planes in \(\mathrm{V}(f_d : d \in \mathbf{d}) \subseteq \PP^n\)
through \(z\) is isomorphic to the scheme of \((r-1)\)-planes in
\(\mathrm{V}(f_{d,d'} : d \in \mathbf{d}, 0 < d' \leq d) \subseteq \PP^{n-1}\), it is
clear from this description that even this restricted classifying map is dominant.

Since the family of penultimate tangents is obtained from the family of pointed
lines by omitting one equation each of degrees \(d_c\) and \(d_c - 1\), the
associated classifying map may be factored as
\[
[\mathcal{P}' \subseteq \mathcal{X}'] \colon
S' \dashrightarrow
\mathbf{Inc}_{r-1,n-1,\mathbf{d}_1} \longrightarrow
\mathbf{Inc}_{r-1,n-1,\mathbf{d}'}
\]
where the first map is the classifying map for the family of pointed lines pulled
back to \(S'\), and the second is induced from the projection
\(\mathbf{H}_{n-1,\mathbf{d}_1} \to \mathbf{H}_{n-1,\mathbf{d}'}\).
This is dominant because each constituent map is: dominance of the former was
just established, whereas the latter is even surjective, with the fibre over a
point \(([\widebar{U}], [\sigma']) \in \mathbf{Inc}_{r-1,n-1,\mathbf{d}'}\)
isomorphic to the bi-projective space on the vector space
\[
\{(g_{d_c}, g_{d_c-1}) :
g_{d_c}\rvert_{\PP\widebar{U}} = 0
\;\text{and}\;
g_{d_c-1}\rvert_{\PP\widebar{U}} = 0\}
\subseteq
\mathrm{H}^0(\PP^{n-1},\sO_{\PP^{n-1}}(d_c)) \times \mathrm{H}^0(\PP^{n-1}, \sO_{\PP^{n-1}}(d_c-1))
\]
parameterizing the missing components of \(\sigma'\).
\end{proof}

With this, the following ensures that for a generic family of \(r\)-planed
multi-degree \(\mathbf{d}\) schemes in \(\PP^n\), the associated families of
pointed lines and penultimate tangents will be generically complete
intersections whenever \(n\) is sufficiently large compared to \(r\) and
\(\mathbf{d}\):

\begin{Proposition}\label{unirationality-generic-complete-intersection}
Let \(\mathcal{P} \subseteq \mathcal{X}\) be a generic family of \(r\)-planed
multi-degree \(\mathbf{d} = (d_1 \leq \cdots \leq d_c)\) schemes in a
\(\PP^n\)-bundle over an integral base \(S\). Assuming
\(\mathbf{d} \neq (1,\ldots,1,2)\) and \(r \geq 1\), if
\[
n \geq
n_0(\mathbf{d},r) \coloneqq
r +
\frac{1}{r}\sum_{i = 1}^c\Bigg[\binom{d_i + r}{r} - 1\Bigg],
\]
then the general fibre of \(\mathcal{X}_1\rvert_{\mathcal{P}} \to \mathcal{P}\),
respectively \(\mathcal{X}' \to S'\), is a complete intersection in \(\PP^{n-1}\)
of multi-degree \(\mathbf{d}_1\), respectively \(\mathbf{d}'\).
\end{Proposition}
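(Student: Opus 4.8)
The plan is to use genericity to reduce everything to the universal incidence variety, and then to settle the reduced statement by a dimension count. By \parref{unirationality-inherit-genericity} the derived families \(\mathcal{P}_1 \subseteq \mathcal{X}_1\rvert_{\mathcal{P}}\) and \(\mathcal{P}' \subseteq \mathcal{X}'\) are again generic; concretely, their classifying maps to \(\mathbf{Inc}_{r-1,n-1,\mathbf{d}_1}\), respectively \(\mathbf{Inc}_{r-1,n-1,\mathbf{d}'}\), are dominant, and the families in question are, over the domain of the classifying map, pulled back along it from the tautological families. Each space \(\mathbf{Inc}_{r-1,n-1,\mathbf{e}}\) is irreducible---being a product of projective bundles over the Grassmannian \(\mathbf{G}(r,n)\) of \((r-1)\)-planes in \(\PP^{n-1}\), as \(\mathrm{H}^0(\PP^{n-1}, I_{\PP\widebar{U}}(d))\) is the kernel of a surjection of vector bundles on that Grassmannian---so ``general fibre'' is unambiguous and pulls back correctly along a dominant map. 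It therefore suffices to prove: for \(\mathbf{e} \in \{\mathbf{d}_1,\mathbf{d}'\}\), writing \(N \coloneqq n-1\), \(r' \coloneqq r-1\), and \(m\) for the number of entries of \(\mathbf{e}\), the general member \(X_\sigma\) of \(\mathbf{Inc}_{r',N,\mathbf{e}}\) has every component of dimension \(N-m\), which for a subscheme of \(\PP^N\) cut out by \(m\) forms is equivalent to being a complete intersection of multi-degree \(\mathbf{e}\).

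Here one inequality is free: every component of \(X_\sigma\) has dimension \(\geq N-m\) by Krull's Hauptidealsatz, and the locus in \(\mathbf{Inc}_{r',N,\mathbf{e}}\) along which the fibre of the tautological family has dimension \(\leq N-m\) is open by upper semicontinuity of fibre dimension, so it is enough to produce one fibre of dimension \(\leq N-m\). I would obtain this from the auxiliary variety
\[
\mathcal{Z} \coloneqq \{([\widebar{U}],[\sigma],x) \in \mathbf{Inc}_{r',N,\mathbf{e}} \times \PP^N : x \in X_\sigma \setminus \PP\widebar{U}\}
\]
through its two projections. Over the dense open of \(\mathbf{G}(r,n) \times \PP^N\) of pairs \(([\widebar{U}],x)\) with \(x \notin \PP\widebar{U}\), the fibre of the first projection is the multi-projective subspace of \(\prod_{d \in \mathbf{e}}\PP\mathrm{H}^0(\PP^N, I_{\PP\widebar{U}}(d))\) of tuples vanishing at \(x\); since the base locus of each linear system \(\lvert I_{\PP\widebar{U}}(d)\rvert\) is exactly \(\PP\widebar{U}\), vanishing at \(x\) is a nonzero linear condition on each factor, cutting codimension exactly \(m\), so \(\dim\mathcal{Z} = \dim\mathbf{Inc}_{r',N,\mathbf{e}} + N - m\). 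The second projection has fibre \(X_\sigma \setminus \PP\widebar{U}\) over \(([\widebar{U}],[\sigma])\); provided the general \(X_\sigma\) is not contained in \(\PP\widebar{U}\), it is dominant, so its general fibre has dimension \(\leq \dim\mathcal{Z} - \dim\mathbf{Inc}_{r',N,\mathbf{e}} = N - m\). As \(\dim\PP\widebar{U} = r'\), this forces every component of the general \(X_\sigma\) to have dimension \(\leq N-m\), hence exactly \(N-m\), once \(r' \leq N-m\).

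Two loose ends remain. The inequality \(r' \leq N-m\) is worst for \(\mathbf{e} = \mathbf{d}_1\), where \(m = \sum_{i=1}^c d_i\) and it reads \(n \geq r + \sum_{i=1}^c d_i\); this is precisely where the hypothesis \(n \geq n_0(\mathbf{d},r)\) is used, through the elementary estimate \(\binom{d+r}{r} - 1 = \sum_{j=1}^{d} \binom{j-1+r}{r-1} \geq dr\) for \(d,r \geq 1\), which gives \(n_0(\mathbf{d},r) \geq r + \sum_{i=1}^c d_i \geq r + \lvert\mathbf{d}'\rvert\). The dominance of \(\mathcal{Z} \to \mathbf{Inc}_{r',N,\mathbf{e}}\) can fail only if \(X_\sigma = \PP\widebar{U}\) is forced for the general \(\sigma\), which requires \(\mathbf{e}\) to consist of exactly \(N-r'\) linear forms; since \(d_c \geq 2\) throughout, \(\mathbf{d}_1\) is never all linear, and the hypotheses \(r \geq 1\) and \(\mathbf{d} \neq (1,\ldots,1,2)\) exclude the analogous situation for \(\mathbf{d}'\)---and in any such forced case \(X_\sigma = \PP\widebar{U}\) is visibly a complete intersection of multi-degree \(\mathbf{e}\), so nothing is lost.

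The step I expect to be most delicate is the dimension count: everything hinges on the claim that each \(\lvert I_{\PP\widebar{U}}(d)\rvert\) has base locus \emph{exactly} \(\PP\widebar{U}\), so that passing through a point off \(\PP\widebar{U}\) really drops each factor's dimension by one rather than leaving it fixed, and on the care needed to confirm that \(\mathcal{Z} \to \mathbf{Inc}_{r',N,\mathbf{e}}\) is dominant in all non-degenerate cases. The remaining ingredients---the reduction via \parref{unirationality-inherit-genericity}, irreducibility of the incidence spaces, the Hauptidealsatz bound, and the numerical estimate---are routine.
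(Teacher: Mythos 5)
Your proposal is correct, but at the decisive step it takes a genuinely different route from the paper. Both arguments start identically: by \parref{unirationality-inherit-genericity} and irreducibility of the incidence space, everything reduces to showing that the fibre of the tautological family over a general point of \(\mathbf{Inc}_{r-1,n-1,\mathbf{e}}\), for \(\mathbf{e} \in \{\mathbf{d}_1,\mathbf{d}'\}\), has the expected dimension. The paper then passes through the projection \(\mathbf{Inc}_{n-1,r-1,\mathbf{d}_1} \to \mathbf{H}_{n-1,\mathbf{d}_1}\): it proves this projection is \emph{surjective}---every multi-degree \(\mathbf{d}_1\) scheme in \(\PP^{n-1}\) contains an \((r-1)\)-plane---by quoting the Debarre--Manivel covering theorem, which is exactly where the full strength of \(n_0(\mathbf{d},r)\) enters (and where the exclusion \(\mathbf{d}\neq(1,\ldots,1,2)\) is used, via the auxiliary inequality \(n_0(\mathbf{d},r) \geq 2r-1+\#\mathbf{d}_1\)); the complete intersection property is then pulled back from the general member of \(\mathbf{H}_{n-1,\mathbf{d}_1}\). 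You instead stay on \(\mathbf{Inc}\) and prove the expected-dimension statement directly by the dimension count through \(\mathcal{Z}\), using only the elementary consequence \(n \geq r + \sum_i d_i\) of the hypothesis, i.e. \(r-1 \leq (n-1)-\#\mathbf{d}_1\). Your route is self-contained (no external covering theorem) and in fact establishes the proposition under a weaker numerical threshold; what the paper's route buys is a very short deduction from a known theorem whose threshold is precisely the \(n_0\) that is anyway built into the recursion of \parref{unirationality-numbers}. One inaccuracy in your write-up, though a harmless one: the assertion that failure of dominance of \(\mathcal{Z}\to\mathbf{Inc}\) ``requires \(\mathbf{e}\) to consist of exactly \(N-r'\) linear forms'' is not justified and is false as a general statement (for instance, \(N-r'\) linear forms together with additional higher-degree entries also force \(X_\sigma = \PP\widebar{U}\)); likewise ``\(d_c \geq 2\) throughout'' is not strictly warranted, since only \((1,\ldots,1,2)\) is excluded. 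Neither matters: once you have \(m \leq N-r'\), Krull applied to any forced case \(X_\sigma = \PP\widebar{U}\) gives \(r' \geq N-m\), hence \(r'=N-m\), so \(X_\sigma\) is a complete intersection of codimension \(m\) and your fallback sentence already covers every way dominance could fail. I would simply replace the ``exactly \(N-r'\) linear forms'' claim by this one-line Krull observation.
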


\begin{proof}
First, for multi-degrees of the form \(\mathbf{d} \neq (1,\ldots,1,2)\) and any
\(r \geq 1\), there is an inequality
\[
r + \frac{1}{r}\sum_{i = 1}^c\Bigg[\binom{d_i + r}{r} - 1\Bigg]
\geq 2r - 1 + \#\mathbf{d}_1
= 2r - 1 + \sum_{i = 1}^c d_i.
\]
Indeed, rearrange and consider the equivalent inequality
\[
\sum_{i = 1}^c\Bigg[\binom{d_i + r}{r} - rd_i - 1\Bigg] - r(r-1) \geq 0.
\]
Each summand vanishes for degrees \(d_i = 1\) and is an increasing function in
the \(d_i\), and so the inequality may be reduced to an explicit verification
for the boundary cases \(\mathbf{d} \in \{(2,2), (3)\}\).

Since \(r \geq 1\), this inequality combined with the hypothesis implies
\(n - 1 - \#\mathbf{d}_1 \geq 0\), and so the tautological family over
\(\mathbf{H}_{n-1,\mathbf{d}_1}\) is generically a complete intersection. Since
the classifying map
\(\mathcal{P} \dashrightarrow \mathbf{Inc}_{n-1,r-1,\mathbf{d}_1}\) is dominant
by \parref{unirationality-inherit-genericity}, the result for pointed lines
would follow if the projection
\(\mathbf{Inc}_{n-1,r-1,\mathbf{d}_1} \to \mathbf{H}_{n-1,\mathbf{d}_1}\)
were surjective. Geometrically, this means every multi-degree \(\mathbf{d}_1\)
scheme in \(\PP^{n-1}\) contains an \((r-1)\)-plane. This is equivalent
to having every multi-degree \(\mathbf{d}\) scheme in \(\PP^n\) being covered
in \(r\)-planes and the hypothesis on \(n\) guarantees this by \cite[Corollaire
5.2(a)]{DM}. Similarly, for penultimate tangents, the condition is for every
scheme of multi-degree
\(\mathbf{d} \setminus \{d_c\} \cup \{d_c - 2\}\) in \(\PP^n\) to be covered
by \(r\)-planes, and it is straightforward to verify that the requisite
lower bound on \(n\) is even smaller than that in the hypothesis.
\end{proof}

\subsectiondash{Ordering and numbers}\label{unirationality-numbers}
The unirationality construction involves repeatedly replacing a family
of multi-degree \(\mathbf{d} = (d_1 \leq \cdots \leq d_c)\) schemes---whenever
\(d_c \geq 2\)!---with their family of penultimate lines, equipped with their
multi-degree \(\mathbf{d}'\) scheme structure from
\parref{unirationality-lower-degrees}. As such, the construction proceeds
inductively on the set \(\Delta\) of multi-degrees equipped with an unusual
partial ordering \(\preceq\) determined by cover relations
\(\mathbf{d}' \precdot \mathbf{d}\) where \(\mathbf{d}' \coloneqq \varnothing\)
if \(d_c = 1\) and otherwise
\[
\mathbf{d}' \coloneqq
(d' : 0 < d' \leq d \;\text{for}\; d \in \mathbf{d}) \setminus (d_c, d_c - 1).
\]
Since each cover relation \(\mathbf{d}' \precdot \mathbf{d}\) either decreases
the multiplicity of the maximal degree or the maximal degree itself, it is
straightforward to verify that \(\preceq\) is, indeed, a partial ordering,
and that each interval \([\varnothing,\mathbf{d}]^\Delta\) to the unique
bottom element \(\varnothing\) is totally ordered and finite.

To formulate the required numerical hypotheses for the unirationality
construction, define two functions \(r(\mathbf{d})\) and
\(n(\mathbf{d},r)\) inductively in the argument \(\mathbf{d} \in \Delta\)
as follows: Let \(r \geq -1\) be an integer and
\(\mathbf{d} = (d_1 \leq \cdots \leq d_c)\) be a multi-degree. For base
cases, set \(r(\varnothing) \coloneqq -2\) and
\[
n(1^c,r) \coloneqq r + c,\;\;
n(1^{c-1}2,r) \coloneqq 2r+c+1,\;\;
n(\mathbf{d},-1) \coloneqq \#\mathbf{d}-1,\;\;
n(\mathbf{d},0) \coloneqq \#\mathbf{d}_1,
\]
where \(1^c\) is the multi-degree with \(d_1 = \cdots = d_c = 1\) where
\(c = 0\) is the case \(\mathbf{d} = \varnothing\), and \(1^{c-1}2\) is
analogous but with \(d_c = 2\). Let \(r \geq 0\) and
\(\mathbf{d}' \precdot \mathbf{d} \neq \varnothing\) be a cover relation
in \(\Delta\), and inductively set
\begin{gather*}
r(\mathbf{d}) \coloneqq \max\{r_0(\mathbf{d}), r(\mathbf{d}') + 1\}
\;\;\text{and}\;\;
n(\mathbf{d},r) \coloneqq
\max \{n_0(\mathbf{d},r), n(\mathbf{d}', r-1) + 1\},
\end{gather*}
where \(r_0(\mathbf{d})\) and \(n_0(\mathbf{d},r)\) are defined as in
\parref{unirationality-res-dominant} and \parref{unirationality-generic-complete-intersection},
respectively, and the multi-degree in \(n(\mathbf{d},r)\) additionally
satisfies either \(d_c \geq 3\) or \(d_{c-1} \geq 2\). With this:

\begin{Proposition}\label{unirationality-in-families}
Let \(\mathcal{P} \subseteq \mathcal{X}\) be a generic family of \(r\)-planed
multi-degree \(\mathbf{d}\) complete intersections in a \(\PP^n\)-bundle over
an integral base scheme \(S\) with \(r \geq r(\mathbf{d})\). If
\(n \geq n(\mathbf{d}, r)\),
then the general fibre of \(\mathcal{X}\) over \(S\) is unirational.
\end{Proposition}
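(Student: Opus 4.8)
The plan is to induct on the multi-degree \(\mathbf{d}\) along the partial order \(\preceq\) on \(\Delta\) recalled in \parref{unirationality-numbers}, which is legitimate since every interval \([\varnothing,\mathbf{d}]^\Delta\) is finite. The base of the induction consists of the three multi-degrees to which the penultimate-tangents construction of \parref{unirationality-penta} does not apply: for \(\mathbf{d}=\varnothing\) the fibres of \(\mathcal{X}=\PP\mathcal{V}\) are projective spaces; for \(\mathbf{d}=1^c\) flatness and \(n\geq n(1^c,r)=r+c\geq c\) force every fibre to be a codimension-\(c\) linear subspace \(\cong\PP^{n-c}\); and for \(\mathbf{d}=1^{c-1}2\) the genericity of \(\mathcal{P}\subseteq\mathcal{X}\) together with \(n\geq n(1^{c-1}2,r)=2r+c+1\) exhibits the general fibre, after cutting by the \(c-1\) hyperplanes, as a quadric hypersurface of dimension at least \(2r+1\) in \(\PP^{n-c+1}\) containing an \(r\)-plane, hence as a smooth quadric. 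In all three cases the general fibre is rational, hence unirational.

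For the inductive step, suppose \(\mathbf{d}\neq\varnothing,1^c,1^{c-1}2\), so that \(d_c\geq 2\) and, whenever \(d_c=2\), also \(d_{c-1}\geq 2\); then \(r_0(\mathbf{d})=\sum_{d\in\mathbf{d}}(d-1)-1\geq 1\), so \(r\geq r(\mathbf{d})\geq 1\). Let \(\mathbf{d}'\precdot\mathbf{d}\) be the cover relation and pass to the family \(\mathcal{X}'\) of penultimate tangents over \(S'=\mathcal{P}\times_S\PP\mathcal{M}\), with its induced \((r-1)\)-planing \(\mathcal{P}'\subseteq\mathcal{X}'\). By \parref{unirationality-lower-degrees} it is a family of multi-degree \(\mathbf{d}'\) schemes; by \parref{unirationality-inherit-genericity} it is generic over the integral base \(S'\); and since \(r\geq 1\), \(\mathbf{d}\neq 1^{c-1}2\), and \(n\geq n(\mathbf{d},r)\geq n_0(\mathbf{d},r)\), \parref{unirationality-generic-complete-intersection} makes the general fibre of \(\mathcal{X}'\to S'\) a complete intersection of multi-degree \(\mathbf{d}'\) in \(\PP^{n-1}\). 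Restricting \(S'\) to a dense open over which \(\mathcal{X}'\) is flat of the expected relative dimension yields a generic family \(\mathcal{P}'\subseteq\mathcal{X}'\) of \((r-1)\)-planed multi-degree \(\mathbf{d}'\) complete intersections; as \(r-1\geq r(\mathbf{d}')\) and \(n-1\geq n(\mathbf{d}',r-1)\) hold by the defining recursions for \(r(\mathbf{d})\) and \(n(\mathbf{d},r)\), the inductive hypothesis shows the general fibre of \(\mathcal{X}'\) over \(S'\) is unirational.

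It remains to transfer this across the residual point map \(\res\colon\mathcal{X}'\dashrightarrow\mathcal{X}\) of \parref{unirationality-res}. I would first verify the hypotheses of \parref{unirationality-res-dominant}: the bound \(r\geq r_0(\mathbf{d})\) is immediate, and \(\mathcal{X}_1\rvert_{\mathcal{P}}\) has its expected dimension fibrewise over a dense open \(S^\circ\subseteq S\) because \parref{unirationality-generic-complete-intersection} makes the generic fibre of \(\mathcal{X}_1\rvert_{\mathcal{P}}\to\mathcal{P}\) a complete intersection of the expected dimension, \(\mathcal{X}_1\rvert_{\mathcal{P}}\to\mathcal{P}\) is surjective since \(r\geq 1\), and every fibre of the proper map \(\mathcal{X}_1\rvert_{\mathcal{P}}\to S\) has at least the expected dimension (being cut out by \(\#\mathbf{d}_1\) equations), so upper semicontinuity confines the jump locus to a proper closed subset of \(S\). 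Hence \(\res\) is dominant fibrewise over \(S^\circ\). Finally, for general \(s\in S^\circ\) the base \(S'_s=\mathcal{P}_s\times\PP\mathcal{M}_s\) is a product of projective spaces, hence rational, and the general fibre of \(\mathcal{X}'\rvert_{S'_s}\to S'_s\) is unirational by spreading out the previous paragraph's conclusion; therefore \(\mathcal{X}'\rvert_{S'_s}\) is unirational, being a family with unirational general fibre over a rational base, and its dominant image \(\mathcal{X}_s\) under \(\res_s\) is unirational.

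The individual steps are routine, and the real content---which is why the recursions in \parref{unirationality-numbers} look as they do---is the bookkeeping that makes the induction close: that \(r_0(\mathbf{d})\geq 1\) off the base cases, that \(n\geq n_0(\mathbf{d},r)\) is available for \parref{unirationality-generic-complete-intersection}, and above all that \(r(\mathbf{d})\geq r(\mathbf{d}')+1\) and \(n(\mathbf{d},r)\geq n(\mathbf{d}',r-1)+1\) deliver precisely the shifted hypotheses \(r-1\geq r(\mathbf{d}')\) and \(n-1\geq n(\mathbf{d}',r-1)\) needed to invoke the inductive hypothesis on \(\mathcal{X}'\). The one geometric input not already packaged in the cited propositions is the elementary fact that a family over a rational base with unirational general fibre is unirational---proved by spreading out a unirational parametrisation of the generic fibre and composing with a rational parametrisation of the base---which is exactly what turns the inductive statement about fibres of \(\mathcal{X}'\) into unirationality of the total space dominating \(\mathcal{X}_s\).
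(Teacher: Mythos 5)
Your proposal is correct and takes essentially the same route as the paper: induction along \((\Delta,\preceq)\) with the rational base cases \((1^c)\) and \((1^{c-1}2)\), passage to the penultimate-tangent family using \parref{unirationality-lower-degrees}, \parref{unirationality-inherit-genericity} and \parref{unirationality-generic-complete-intersection}, the recursions for \(r(\mathbf{d})\) and \(n(\mathbf{d},r)\) delivering exactly the shifted hypotheses \(r-1 \geq r(\mathbf{d}')\), \(n-1 \geq n(\mathbf{d}',r-1)\), and descent along the fibrewise-dominant residual map of \parref{unirationality-res-dominant} over the rational fibres of \(S' \to S\). You in fact spell out two points the paper leaves implicit (the expected-dimension check for \parref{unirationality-res-dominant} and the ``unirational fibres over a rational base'' step), treating them at the same or greater level of detail than the paper's own proof.
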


\begin{proof}
Induct on \(\mathbf{d} = (d_1 \leq \cdots \leq d_c)\) along the poset
\((\Delta,\preceq)\), with base cases \(\mathbf{d} \in \{(1^c),(1^{c-1}2)\}\),
wherein \(\mathcal{X}\) is either a projective bundle or a quadric
over \(S\) with a nontrivial linear space, and every fibre is even rational.
Suppose that either \(d_c \geq 3\) or \(d_{c-1} \geq 2\) and that the
conclusion holds for the multi-degree \(\mathbf{d}' \precdot \mathbf{d}\)
covered by \(\mathbf{d}\). The penultimate
tangent construction from \parref{unirationality-penta}, together with
\parref{unirationality-lower-degrees} and
\parref{unirationality-inherit-genericity}, provides a generic family of
\((r-1)\)-planed schemes of multi-degree \(\mathbf{d}'\) in a
\(\PP^{n-1}\)-bundle over the bi-projective bundle
\(\mathcal{P} \times_S \PP\mathcal{M}\). This family is generic by
\parref{unirationality-inherit-genericity}.  Since
\(n - 1 \geq n_0(\mathbf{d}',r-1)\),
\parref{unirationality-generic-complete-intersection} provides a dense
open subscheme \(S' \subseteq \mathcal{P} \times_S \PP\mathcal{M}\) over which
the restricted family \(\mathcal{P}' \subseteq \mathcal{X}'\) is a complete
intersection. Since \(r - 1 \geq r_0(\mathbf{d}')\),
the inductive hypothesis applies; after replacing \(S'\) by a dense open,
assume moreover that every fibre of \(\mathcal{X}' \to S'\) is unirational.
Finally, \parref{unirationality-res-dominant} ensures that the residual point map
\(\res \colon \mathcal{X}' \dashrightarrow \mathcal{X}\) exists and is dominant
fibrewise over the dense image \(S^\circ \subseteq S\) of the dominant morphism
\(S' \subseteq \mathcal{P} \times_S \PP\mathcal{M} \to S\). Since the fibres of
\(S' \to S\) are rational, this implies the fibres of \(\mathcal{X} \to S\)
over \(S^\circ\) are unirational.
\end{proof}

Applying this to the universal family over the incidence correspondence
\(\mathbf{Inc}_{r,n,\mathbf{d}}\) gives:

\begin{Theorem}\label{unirationality-main-theorem}
A general complete intersection of multi-degree \(\mathbf{d}\) in \(\PP^n\) is
unirational whenever
\[
\pushQED{\qed}
n \geq n(\mathbf{d}) \coloneqq n(\mathbf{d},r(\mathbf{d})).
\qedhere
\popQED
\]
\end{Theorem}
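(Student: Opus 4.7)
The plan is to apply \parref{unirationality-in-families} directly to the tautological family over the incidence correspondence and then descend to the parameter space of complete intersections. Set \(r \coloneqq r(\mathbf{d})\), let \(S \coloneqq \mathbf{Inc}_{r,n,\mathbf{d}}\), and consider the universal \(r\)-planed family \(\mathcal{P} \subseteq \mathcal{X}\) of multi-degree \(\mathbf{d}\) schemes on \(\PP^n \times S\). The base \(S\) fibres over \(\mathbf{G}(r+1,n+1)\) with fibres the linear systems of multi-degree \(\mathbf{d}\) schemes containing the tautological \(r\)-plane, so \(S\) is irreducible and integral. The tautological family is generic by definition, and the hypothesis \(n \geq n(\mathbf{d}, r(\mathbf{d}))\) is precisely what \parref{unirationality-in-families} requires, so it supplies a dense open \(U \subseteq S\) over which every fibre of \(\mathcal{X}\) is unirational.

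The remaining task is to transport this generic unirationality statement on \(S\) to a statement about a general point of \(\mathbf{H}_{n,\mathbf{d}}\). For this I would show that the projection \(p \colon S \to \mathbf{H}_{n,\mathbf{d}}\) is surjective: its fibre over \([\sigma]\) is the Fano scheme \(\mathbf{F}_r(X_\sigma)\) of \(r\)-planes in \(X_\sigma\), which is non-empty as soon as \(n \geq n_0(\mathbf{d}, r)\), by \cite[Corollaire 5.2(a)]{DM} as used inside \parref{unirationality-generic-complete-intersection}. The inequality \(n(\mathbf{d}, r) \geq n_0(\mathbf{d}, r)\) is built into the inductive recipe of \parref{unirationality-numbers}, so surjectivity holds under the hypothesis of the theorem.

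With \(p\) surjective and \(S\) irreducible, every non-empty open of \(\mathbf{H}_{n,\mathbf{d}}\) has non-empty preimage in \(S\), which must meet the dense open \(U\); hence \(p(U)\) is dense in \(\mathbf{H}_{n,\mathbf{d}}\), and, being constructible by Chevalley, contains a dense open. Every \([\sigma]\) in this open lies beneath a point of \(U\), so the corresponding \(X_\sigma\) is unirational. The genuinely trivial base cases \(\mathbf{d} \in \{(1^c),(1^{c-1}2)\}\) are absorbed into the base of the induction inside \parref{unirationality-in-families}, where the relevant fibres are even rational.

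The whole argument is essentially the assembly of prior results; the only point that needs any care is the uniform surjectivity of \(p\) across all admissible \((\mathbf{d}, n)\), and this was deliberately arranged by the \(n_0\) term in the recursive definition of \(n(\mathbf{d}, r)\). I therefore expect no serious obstacle in writing this up formally.
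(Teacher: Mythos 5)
Your proposal reproduces the paper's argument: the theorem is obtained by applying \parref{unirationality-in-families} to the tautological family over \(\mathbf{Inc}_{r(\mathbf{d}),n,\mathbf{d}}\), which is integral and generic by construction, and the resulting dense-open statement is transported to \(\mathbf{H}_{n,\mathbf{d}}\) along the projection, whose (sur/dominance) is guaranteed because \(n(\mathbf{d},r(\mathbf{d})) \geq n_0(\mathbf{d},r(\mathbf{d}))\) by the recursive definition in \parref{unirationality-numbers} together with the covering criterion from \cite{DM} invoked inside \parref{unirationality-generic-complete-intersection}. The paper compresses this descent into the single phrase ``applying this to the universal family over the incidence correspondence''; you have simply made that step explicit, and correctly so.
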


\section{Dimension bounds}\label{section-bounds}
The main result of this section is \parref{bounds-main-estimate}, which
provides a doubly exponential upper bound on the quantity
\(n(d) \coloneqq n(d, r(d))\) appearing in \parref{unirationality-numbers}.
As may be seen from the initial values
\begin{gather*}
n(3) = 4, \quad
n(4) = 9, \quad
n(5) = 22, \quad
n(6) = 160, \quad
n(7) = 20\;376, \\
n(8) = 11\;914\;188\;890, \;\;\text{and}\;\;
n(9) = 8\;616\;199\;237\;736\;295\;920\;955\;120,
\end{gather*}
the constants in this bound are far from optimal, but the growth rate appears
to be reasonably close to the truth. It may be interesting to note that the
double exponential primarily stems from growth in the length of the interval
\([\varnothing, d]^\Delta\) in the poset introduced in
\parref{unirationality-numbers} between the empty multi-degree and the degree
\(d\). Various statements in this section require explicit numerical
verification; Python code implemented for this purpose may be found at
the repository \cite{unirationality-bounds}.

\subsectiondash{Multiplicity sequences}\label{bounds-multiplicity-sequence}
For the purposes of this section, it will be convenient to represent a
multi-degree \(\mathbf{d} \coloneqq (d_1 \leq \cdots \leq d_c)\) by its
\emph{multiplicity sequence}
\[
\boldsymbol{\mu} \coloneqq
(\mu_d : d \geq 1)
\;\;\text{where}\; \mu_d \coloneqq \#\{i : d_i = d\}.
\]
The multiplicity sequence \(\boldsymbol{\mu}' \coloneqq (\mu_d' : d \geq 1)\)
associated with the multi-degree
\(\mathbf{d}' \coloneqq \mathbf{d}_1 \setminus \{d_c, d_c-1\}\) from
\parref{unirationality-lower-degrees} for penultimate tangents is expressed in
terms of \(\boldsymbol{\mu}\) as
\[
\boldsymbol{\mu}' =
(\mu_1 + \mu_2 + \cdots + \mu_{d_c},
\ldots,\;
\mu_{d_c-2} + \mu_{d_c-1} + \mu_{d_c},\;
\mu_{d_c-1} + \mu_{d_c} - 1,\;
\mu_{d_c} - 1)
\]
and the numerical functions \(r_0(\mathbf{d})\) and \(n_0(\mathbf{d},r)\)
defined in \parref{unirationality-res-dominant} and
\parref{unirationality-generic-complete-intersection}, respectively,
take the following forms in terms of the multiplicity sequences:
\[
r_0(\boldsymbol{\mu}) = \sum_{d = 1}^{d_c} \mu_d (d - 1) - 1
\;\;\text{and}\;\;
n_0(\boldsymbol{\mu},r) = r + \frac{1}{r}\sum_{d = 1}^{d_c} \mu_d\Bigg[\binom{d + r}{r} - 1\Bigg].
\]

As a first step towards estimating \(r(d)\), the following shows that
\(r_0\) often grows when passing to the multi-degree associated with
penultimate tangents, showing that \(r(\mathbf{d})\) takes the value of its
recursive call in its definition in \parref{unirationality-numbers}
except in two families of cases:

\begin{Lemma}\label{bounds-bigger-r}
\(r_0(\boldsymbol{\mu}') + 1 < r_0(\boldsymbol{\mu})\) if and only if
either
\(\boldsymbol{\mu} = (\mu_1,\mu_2,1)\) or
\(\boldsymbol{\mu} = (\mu_1,\mu_2,0,1)\).
\end{Lemma}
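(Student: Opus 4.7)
The plan is to reduce the statement to an arithmetic inequality on \(\mathbf{d}\) and then perform a short case check on \(d_c\). Recall from \parref{bounds-multiplicity-sequence} that \(r_0(\boldsymbol{\mu}) = \sum_{d \in \mathbf{d}} (d-1) - 1\). Since \(\mathbf{d}'\) is obtained from \(\mathbf{d}_1\) by discarding one copy each of \(d_c\) and \(d_c - 1\), and \(\mathbf{d}_1\) contains a copy of every \(d' \in \{1, \ldots, d\}\) for each \(d \in \mathbf{d}\), a direct computation telescopes as
\[
\sum_{d' \in \mathbf{d}_1} (d' - 1) = \sum_{d \in \mathbf{d}} \binom{d}{2}.
\]
Applying the identity \(\binom{d}{2} - (d-1) = \binom{d-1}{2}\) would then give the closed form
\[
r_0(\boldsymbol{\mu}') - r_0(\boldsymbol{\mu}) = \sum_{d \in \mathbf{d}} \binom{d-1}{2} - 2d_c + 3.
\]
Since both sides of \(r_0(\boldsymbol{\mu}') + 1 < r_0(\boldsymbol{\mu})\) are integers, the inequality is equivalent to
\[
\sum_{d \in \mathbf{d}} \binom{d-1}{2} \leq 2d_c - 5.
\]

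The second step is a bounded case analysis on \(d_c\). Since the contribution from \(d_c \in \mathbf{d}\) alone forces \(\binom{d_c - 1}{2} \leq 2 d_c - 5\), rearranging as \((d_c - 3)(d_c - 4) \leq 0\) restricts to \(d_c \in \{3, 4\}\). For \(d_c = 3\) the remaining budget is \(1\), permitting exactly \(\mu_3 = 1\) with no other degrees exceeding \(2\); this produces \(\boldsymbol{\mu} = (\mu_1, \mu_2, 1)\) with \(\mu_1, \mu_2\) arbitrary. For \(d_c = 4\) the budget is \(3\), which is precisely absorbed by a single copy of \(4\) and forbids any copies of \(3\), giving \(\boldsymbol{\mu} = (\mu_1, \mu_2, 0, 1)\). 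In both families the inequality is saturated, which establishes the converse direction.

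The only mild obstacle is the initial telescoping bookkeeping needed to derive the closed form for \(r_0(\boldsymbol{\mu}') - r_0(\boldsymbol{\mu})\); once that is in hand, the rest reduces to factoring a quadratic and a two-case boundary check.
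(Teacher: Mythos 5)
Your proof is correct and follows essentially the same route as the paper: both derive the closed form \(r_0(\boldsymbol{\mu}') - r_0(\boldsymbol{\mu}) = \sum_{d}\binom{d-1}{2} - 2d_c + 3\), reduce the strict integer inequality to \(\sum_d \binom{d-1}{2} \leq 2d_c - 5\) (equivalently, the paper's \(< 2d_c - 4\)), observe that the mandatory term \(\binom{d_c-1}{2}\) already exceeds the budget once \(d_c \geq 5\), and finish with the finite case check for \(d_c \in \{3,4\}\). Your write-up is slightly more explicit about the budget bookkeeping in the two surviving cases, but there is no substantive difference in method.
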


\begin{proof}
A computation shows that
\(r_0(\boldsymbol{\mu}') = r_0(\boldsymbol{\mu}) + \sum\nolimits_{d = 1}^{d_c} \mu_d \cdot \binom{d-1}{2} - 2d_c + 3\),
and so the inequality in question is equivalent to
\[
\sum\nolimits_{d = 1}^{d_c}\mu_d \cdot \binom{d-1}{2} < 2d_c - 4.
\]
Since
\(\binom{d_c - 1}{2} \geq 2d_c - 4\) when \(d_c \geq 5\), the result follows
after a case analysis for \(d_c \leq 4\).
\end{proof}

This suggests that \(r(\boldsymbol{\mu})\) may be computed by splitting
\([\boldsymbol{0}, \boldsymbol{\mu}]^\Delta\) at one of the exceptions in
\parref{bounds-bigger-r}; here and in what follows, the partial ordering on
multiplicity sequences is taken to be the ordering from
\parref{unirationality-numbers} on the corresponding multi-degrees.
Before proceeding, compute \(r(\boldsymbol{\mu})\) when
\(1 \leq d_c \leq 2\): Since \(r_0(\boldsymbol{\mu}) = \mu_2 - 1\) for
any \(\boldsymbol{\mu} = (\mu_1,\mu_2)\), it is straightforward that
\[
r(\boldsymbol{\mu})
= \max\{r_0(\boldsymbol{\mu}), r(\boldsymbol{\mu}') + 1\} 
= \mu_2 - 1
= \begin{dcases*}
\#[\boldsymbol{0}, \boldsymbol{\mu}]^\Delta - 2 & if \(\boldsymbol{\mu} = (0,1)\), and \\
\#[\boldsymbol{0}, \boldsymbol{\mu}]^\Delta - 3 & otherwise.
\end{dcases*}
\]
Generally, \(r(\boldsymbol{\mu})\) may be expressed in terms of the length of
the interval \([\boldsymbol{0}, \boldsymbol{\mu}]^\Delta\):

\begin{Lemma}\label{bounds-compute-r}
\(r(\boldsymbol{\mu}) = \#[\boldsymbol{0},\boldsymbol{\mu}]^{\Delta} - 2\)
for any multiplicity sequence \(\boldsymbol{\mu}\) with \(d_c \geq 3\).
\end{Lemma}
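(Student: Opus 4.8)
The claim is that $r(\boldsymbol{\mu}) = \#[\boldsymbol{0},\boldsymbol{\mu}]^{\Delta} - 2$ whenever $d_c \geq 3$. The strategy is induction along the interval $[\boldsymbol{0},\boldsymbol{\mu}]^\Delta$, which \parref{unirationality-numbers} records as finite and totally ordered. Let me write $L(\boldsymbol{\mu}) := \#[\boldsymbol{0},\boldsymbol{\mu}]^\Delta$ for the length, and recall from the computation just before the statement that $r(\boldsymbol{\mu}) = L(\boldsymbol{\mu}) - 2$ already holds when $\boldsymbol{\mu} = (0,1)$ (the unique multiplicity sequence with $L = 2$ among those with $d_c \leq 2$ that achieves the "$-2$" form) — and $r(\boldsymbol{\mu}) = L(\boldsymbol{\mu}) - 3$ for the other $d_c \leq 2$ cases.

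\emph{First}, I would dispose of the base cases with $d_c = 3$: here $\boldsymbol{\mu} = (\mu_1,\mu_2,\mu_3)$ with $\mu_3 \geq 1$, and the cover $\boldsymbol{\mu}' \precdot \boldsymbol{\mu}$ produces a multiplicity sequence of the form $(\ast,\ast,1)$ or $(\ast,\ast,0,1)$ in the exceptional families of \parref{bounds-bigger-r}, so $r_0(\boldsymbol{\mu}') + 1 < r_0(\boldsymbol{\mu})$ and hence $r(\boldsymbol{\mu}) = r_0(\boldsymbol{\mu})$. It then remains to check by direct computation — using the formula $r_0(\boldsymbol{\mu}) = \sum_{d=1}^{d_c}\mu_d(d-1) - 1$ — that this equals $L(\boldsymbol{\mu}) - 2$; one computes $L(\boldsymbol{\mu})$ by tracing the (unique) chain of covers down to $\boldsymbol{0}$, each step being explicit from the recursion in \parref{bounds-multiplicity-sequence}.

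\emph{Second}, for the inductive step assume $d_c \geq 4$ and that the formula holds for all multiplicity sequences strictly below $\boldsymbol{\mu}$ in $\Delta$ — in particular for $\boldsymbol{\mu}'$, the unique sequence covered by $\boldsymbol{\mu}$. By \parref{bounds-bigger-r}, since $d_c \geq 4$ and $\boldsymbol{\mu}$ is not of the exceptional shape $(\mu_1,\mu_2,1)$ or $(\mu_1,\mu_2,0,1)$ (those have $d_c \leq 4$ and $\mu_{d_c}=1$ but more to the point force $d_c \leq 4$ with the wrong tail — I would just note the exceptions all have $d_c \leq 4$ and that when $d_c = 4$ one checks $\binom{3}{2}=3 < 2\cdot 4 - 4 = 4$ \emph{fails}, so in fact at $d_c=4$ one must argue the relevant $\boldsymbol{\mu}$ is not exceptional), one has $r_0(\boldsymbol{\mu}') + 1 \geq r_0(\boldsymbol{\mu})$, so the recursion in \parref{unirationality-numbers} gives
\[
r(\boldsymbol{\mu}) = \max\{r_0(\boldsymbol{\mu}), r(\boldsymbol{\mu}') + 1\} = r(\boldsymbol{\mu}') + 1.
\]
Now if $\boldsymbol{\mu}'$ still has its maximal degree $\geq 3$, the inductive hypothesis gives $r(\boldsymbol{\mu}') = L(\boldsymbol{\mu}') - 2$, and since $L(\boldsymbol{\mu}) = L(\boldsymbol{\mu}') + 1$ (the chain for $\boldsymbol{\mu}$ is that for $\boldsymbol{\mu}'$ with one extra top element), we conclude $r(\boldsymbol{\mu}) = L(\boldsymbol{\mu}) - 2$. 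If instead $\boldsymbol{\mu}'$ has maximal degree $\leq 2$, then $\boldsymbol{\mu}'$ must be $(0,1)$ — because the only multi-degrees with $d_c \leq 2$ reachable as $\boldsymbol{\mu}'$ from a $\boldsymbol{\mu}$ with $d_c \geq 4$ via one cover would need $\mathbf{d}$ itself to have been $(\cdot,\cdot,\cdot)$ collapsing appropriately, and I would check this forces $\boldsymbol{\mu}=(\mu_1,\mu_2,1)$ or $(\mu_1,\mu_2,0,1)$ contradicting $d_c \geq 4$ unless $\boldsymbol{\mu}' = (0,1)$ exactly; then $r(\boldsymbol{\mu}') = 0 = L(\boldsymbol{\mu}') - 2$ and the same arithmetic closes the step.

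\emph{The main obstacle} I anticipate is the bookkeeping at the boundary $d_c = 4$: there the inequality $\binom{d_c-1}{2} \geq 2d_c - 4$ used in \parref{bounds-bigger-r} becomes $3 \geq 4$, which is false, so I cannot blindly say $r(\boldsymbol{\mu}) = r_0(\boldsymbol{\mu})$; I must instead verify, multi-degree by multi-degree for those $\boldsymbol{\mu}$ with $d_c = 4$ that are \emph{not} in the two exceptional families, that $\sum_d \mu_d \binom{d-1}{2} \geq 2d_c - 4 = 4$ still holds — which it does because any non-exceptional $\boldsymbol{\mu}$ with $d_c = 4$ has either $\mu_4 \geq 2$ (contributing $2\binom{3}{2} = 6$) or $\mu_3 \geq 1$ together with $\mu_4 = 1$ (contributing $\binom{2}{2} + \binom{3}{2} = 4$), so the inequality is exactly met or exceeded, giving $r_0(\boldsymbol{\mu}') + 1 \geq r_0(\boldsymbol{\mu})$ and hence $r(\boldsymbol{\mu}) = r(\boldsymbol{\mu}') + 1$ as needed. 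A careful enumeration here, plus the observation that $\boldsymbol{\mu}' = (0,1)$ is the only $d_c \leq 2$ sequence that can appear mid-chain below a $d_c \geq 3$ sequence, is what makes the induction go through cleanly; the rest is the routine length count $L(\boldsymbol{\mu}) = L(\boldsymbol{\mu}') + 1$ and arithmetic.
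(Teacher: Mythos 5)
Your overall strategy---inducting along the totally ordered interval \([\boldsymbol{0},\boldsymbol{\mu}]^\Delta\) and using \parref{bounds-bigger-r} to see that \(r(\boldsymbol{\mu}) = r(\boldsymbol{\mu}')+1\) at non-exceptional steps---is the same as the paper's, but your treatment of the base cases contains a genuine error. You claim that for \emph{every} \(\boldsymbol{\mu}=(\mu_1,\mu_2,\mu_3)\) with \(\mu_3\geq 1\) one has \(r(\boldsymbol{\mu})=r_0(\boldsymbol{\mu})\). This is false: \parref{bounds-bigger-r} says the strict inequality \(r_0(\boldsymbol{\mu}')+1<r_0(\boldsymbol{\mu})\) holds \emph{only} when \(\boldsymbol{\mu}\) itself is of the shape \((\mu_1,\mu_2,1)\) or \((\mu_1,\mu_2,0,1)\) (your sentence applies the criterion to the cover \(\boldsymbol{\mu}'\) rather than to \(\boldsymbol{\mu}\)), and for \(\mu_3\geq 2\) the opposite inequality forces \(r(\boldsymbol{\mu})=r(\boldsymbol{\mu}')+1\), which can exceed \(r_0(\boldsymbol{\mu})\). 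Concretely, for \(\boldsymbol{\mu}=(0,0,3)\) (multi-degree \((3,3,3)\)) one has \(r_0=5\), while the chain \((0,0,3)\precdot^{-1}\cdots\) gives \(r(7,3,1)=4\), \(r(3,2,2)=5\), \(r(0,0,3)=6=\#[\boldsymbol{0},\boldsymbol{\mu}]^\Delta-2\); so your base case would assert the wrong value. Even in the genuinely exceptional cases, the inference ``\(r_0(\boldsymbol{\mu}')+1<r_0(\boldsymbol{\mu})\) hence \(r(\boldsymbol{\mu})=r_0(\boldsymbol{\mu})\)'' needs the extra input \(r(\boldsymbol{\mu}')=\mu_2-1\) from the \(d_c\leq 2\) computation, since a priori \(r(\boldsymbol{\mu}')\) could exceed \(r_0(\boldsymbol{\mu}')\).

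The second gap is that the exceptional family \((\mu_1,\mu_2,0,1)\), which has \(d_c=4\), is excluded from your inductive step but never verified anywhere: your only base cases are at \(d_c=3\). This family cannot be absorbed into the generic step, because its cover \((\mu_1+\mu_2+1,\mu_2+1)\) drops directly to \(d_c=2\), where \(r=\#[\boldsymbol{0},\cdot]^\Delta-3\), so the ``add one per cover'' arithmetic would yield \(\#[\boldsymbol{0},\boldsymbol{\mu}]^\Delta-3\) instead of \(-2\); here one genuinely has \(r(\boldsymbol{\mu})=r_0(\boldsymbol{\mu})=\mu_2+2>r(\boldsymbol{\mu}')+1\) and a direct check is required, exactly as the paper does. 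The correct organization (and the paper's) is: verify the two exceptional families \((\mu_1,\mu_2,1)\) and \((\mu_1,\mu_2,0,1)\) by explicit computation against the \(d_c\leq2\) formulas, and for every other \(\boldsymbol{\mu}\) with \(d_c\geq3\) (including \(d_c=3\) with \(\mu_3\geq2\)) note that the chain down to \(\boldsymbol{0}\) passes through a unique sequence \(\boldsymbol{\nu}=(\nu_1,\nu_2,1)\), with every intermediate step non-exceptional, so \(r(\boldsymbol{\mu})=r(\boldsymbol{\nu})+\#[\boldsymbol{\nu},\boldsymbol{\mu}]^\Delta-1\). Your observation about re-checking the inequality at \(d_c=4\) is fine but redundant given the ``if and only if'' in \parref{bounds-bigger-r}; the real work you are missing is the two exceptional base computations and the correct placement of the \(d_c=3\), \(\mu_3\geq2\) sequences inside the inductive step rather than the base case.
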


\begin{proof}
Suppose first that \(\boldsymbol{\mu}\) is among the two cases of
\parref{bounds-bigger-r}, wherein
\begin{align*}
r(\mu_1,\mu_2,1)
& = \max\{\mu_2 + 1, r(\mu_1+\mu_2+1,\mu_2) + 1\}
  = \mu_2 + 1\;\text{and}\\
r(\mu_1,\mu_2,0,1)
& = \max\{\mu_2 + 2, r(\mu_1+\mu_2+1,\mu_2+1) + 1\}
= \mu_2 + 2,
\end{align*}
and both results are equal to \(\#[\boldsymbol{0}, \boldsymbol{\mu}]^\Delta-2\).
In all other cases, the interval \([\boldsymbol{0}, \boldsymbol{\mu}]^\Delta\)
contains a unique multiplicity sequence of the form \(\nu = (\nu_1,\nu_2,1)\).
Inductively applying \parref{bounds-bigger-r} gives
\[
r(\boldsymbol{\mu})
= r(\boldsymbol{\nu}) + (\#[\boldsymbol{\nu}, \boldsymbol{\mu}]^\Delta - 1)
= (\#[\boldsymbol{0}, \boldsymbol{\nu}]^\Delta - 2)
  + (\#[\boldsymbol{\nu}, \boldsymbol{\mu}]^\Delta - 1)
= \#[\boldsymbol{0}, \boldsymbol{\mu}]^\Delta - 2.
\qedhere
\]
\end{proof}

The quantities arising in this formula for \(r(d)\) may be determined via
a power series method. To begin, for each \(i \geq 0\), consider the operator
on \(\mathbf{Q}[\![x]\!]\) defined by
\[
\Delta_i F(x) \coloneqq
(1-x)^{-1} F(x)  - x^i - x^{i+1}.
\]
Next, inductively define a sequence of integers
\(\{m_{i,j}\}_{i,j \geq 0}\) as coefficients of certain formal power series as
follows: Set \(F_0(x) \coloneqq 1\), \(m_{0,0} \coloneqq 1\), and
\(m_{0,j} = 0\) for \(j \geq 1\). Let \(i \geq 0\) and assume that \(F_i(x)\)
and \(\{m_{i,j}\}_{j \geq 0}\) have been defined. Writing
\(m_i \coloneqq m_{i,0}\), define
\[
F_{i+1}(x)
\coloneqq \Delta_i^{m_i} F_i(x)
\eqqcolon \sum\nolimits_{j \geq 0} m_{i+1,j} x^{i+j+1}.
\]
The import of these quantities is that \(\Delta_i\) models the mapping
\(\boldsymbol{\mu} \coloneqq (\mu_1,\ldots,\mu_{d_c}) \mapsto \boldsymbol{\mu}' \coloneqq (\mu_1',\ldots,\mu_{d_c}')\)
which takes a multiplicity sequence to that associated with penultimate
tangents in the sense that
\begin{align*}
\text{if}\;
F(x)
& = \mu_{d_c} x^i + \mu_{d_c-1} x^{i+1} + \cdots + \mu_1 x^{i+d_c-1} + x^{i+d_c}G(x)
\;\text{for some \(G(x) \in \mathbf{Q}[\![x]\!]\),} \\
\text{then}\;
\Delta_i F(x)
& = \mu_{d_c}' x^i + \mu_{d_c-1}' x^{i+1} + \cdots + \mu_1' x^{i+d_c-1} + x^{i+d_c}H(x)
\end{align*}
for some uniquely determined \(H(x) \in \mathbf{Q}[\![x]\!]\).
Writing \(\boldsymbol{\mu}^{(m)}\) for the image of \(\boldsymbol{\mu}\) under
\(m\)-fold iteration of \(\boldsymbol{\mu} \mapsto \boldsymbol{\mu}'\), an
induction argument shows that the quantities \(m_{i,j}\)
are related to \(r(d)\) via:

\begin{Lemma}\label{bounds-mij-and-mu}
Let \(\boldsymbol{\mu} = (0,\ldots,0,1)\) be the multiplicity sequence for the degree \(d\).
Then
\[
\pushQED{\qed}
\boldsymbol{\mu}^{(m_0 + m_1 + \cdots + m_{i-1})} =
(m_{i,d-i-1}, m_{i,d-i-2}, \ldots, m_{i,0})\;\;\text{for each \(0 \leq i \leq d-1\).}
\qedhere
\popQED
\]
\end{Lemma}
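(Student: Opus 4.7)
The plan is to prove the lemma by induction on $i$, after first establishing that the operator $\Delta_i$ is a faithful incarnation of the penultimate tangent operation $\boldsymbol{\mu}\mapsto\boldsymbol{\mu}'$ on multiplicity sequences, once the sequences are encoded suitably as power series. Specifically, say that a power series $F(x)$ is a \emph{shifted encoding at level $i$} of a multiplicity sequence $\boldsymbol{\nu} = (\nu_1,\ldots,\nu_k)$ if
\[
F(x) = \nu_k x^i + \nu_{k-1} x^{i+1} + \cdots + \nu_1 x^{i+k-1} + x^{i+k} G(x)
\]
for some $G(x) \in \mathbf{Q}[\![x]\!]$. The first step is then to verify directly from the formula for $\boldsymbol{\nu}'$ recorded in \parref{bounds-multiplicity-sequence} that if $F$ encodes $\boldsymbol{\nu}$ at level $i$, then $\Delta_i F$ encodes $\boldsymbol{\nu}'$ at level $i$. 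Indeed, multiplication by $(1-x)^{-1}$ produces the partial sums $\nu_k + \nu_{k-1} + \cdots + \nu_{k-j}$ in each successive coefficient, exactly matching the prescribed values of $\nu'_{k-j}$ for $j \geq 2$, while subtracting $x^i + x^{i+1}$ corrects the top two entries to $\nu'_k = \nu_k - 1$ and $\nu'_{k-1} = \nu_{k-1} + \nu_k - 1$.

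With this correspondence in hand, the base case $i = 0$ is immediate: $F_0(x) = 1$ is the shifted encoding at level $0$ of $\boldsymbol{\mu} = (0,\ldots,0,1)$ of length $d$, with $m_{0,j} = \delta_{j,0}$, matching the claim for $\boldsymbol{\mu}^{(0)} = \boldsymbol{\mu}$. For the inductive step, assume that $F_i(x) = \sum_j m_{i,j} x^{i+j}$ is the shifted encoding at level $i$ of $\boldsymbol{\nu} \coloneqq \boldsymbol{\mu}^{(m_0 + \cdots + m_{i-1})}$, which has length $d - i$ and satisfies $\nu_{d-i-j} = m_{i,j}$; in particular $\nu_{d-i} = m_i$. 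By the correspondence established above, each application of $\Delta_i$ advances $\boldsymbol{\nu}$ one step along the $\boldsymbol{\nu}\mapsto\boldsymbol{\nu}'$ iteration, and decreases the top-degree multiplicity by one.

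After precisely $m_i$ applications, the coefficient of $x^i$ has dropped to zero, so $F_{i+1}(x) = \Delta_i^{m_i} F_i(x)$ has valuation at least $i + 1$; reinterpreting as a shifted encoding at level $i + 1$ merely records that the iterated multiplicity sequence $\boldsymbol{\mu}^{(m_0 + \cdots + m_i)}$ has length at most $d - i - 1$. Reading off the coefficients $F_{i+1}(x) = \sum_j m_{i+1,j} x^{i+j+1}$ then identifies the entries of this multiplicity sequence as $(m_{i+1, d-i-2}, m_{i+1, d-i-3}, \ldots, m_{i+1, 0})$, which is exactly the lemma's assertion for index $i+1$.

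The only real obstacle is the bookkeeping: the multiplicity sequence format stores the top-degree multiplicity \emph{last}, whereas the power series encoding places it at the \emph{lowest} exponent, and one must be careful with trailing zeros that appear whenever $m_i = 1$ causes the nominal top degree to drop. Once one commits to tracking the shifted encoding at the correct level $i$ rather than to the intrinsic length of the multiplicity sequence, every step reduces to matching coefficients.
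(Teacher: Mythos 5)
Your induction is correct and is essentially the argument the paper itself indicates with the phrase ``an induction argument shows'' before stamping the lemma with a \(\qed\). The key verification---that \(\Delta_i\) carries a level-\(i\) shifted encoding of \(\boldsymbol{\nu}\) to a level-\(i\) shifted encoding of \(\boldsymbol{\nu}'\), with the level advancing to \(i+1\) exactly after \(m_i\) applications once the \(x^i\) coefficient has dropped to zero---is carried through correctly, including the trailing-zero bookkeeping when the nominal top degree shortens.
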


In particular, \(\boldsymbol{\mu}^{(m_0 + \cdots + m_{d-2})} = (m_{d-1})\) and so
the interval \([\boldsymbol{0}, \boldsymbol{\mu}]^\Delta\) has length
\(m_0 + \cdots + m_{d-2} + 2\). Combined with \parref{bounds-compute-r}, this
gives:

\begin{Corollary}\label{bounds-compute-r-explicitly}
\(r(d) = m_0 + \cdots + m_{d-2}\) for any integer \(d \geq 3\).
\qed
\end{Corollary}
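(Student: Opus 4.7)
The plan is a short deduction from Lemmas \parref{bounds-compute-r} and \parref{bounds-mij-and-mu}. Writing $\boldsymbol{\mu} = (0,\ldots,0,1)$ for the multiplicity sequence of the single degree $d$, the hypothesis $d \geq 3$ permits the application of \parref{bounds-compute-r}, which identifies $r(d) = r(\boldsymbol{\mu})$ with $\#[\boldsymbol{0},\boldsymbol{\mu}]^\Delta - 2$. The task therefore reduces to computing the length of this totally ordered, finite interval, and this length is exactly what the generating-series setup of \parref{bounds-mij-and-mu} was designed to track.

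Concretely, I would apply \parref{bounds-mij-and-mu} at $i = d - 1$: it asserts that the iterate $\boldsymbol{\mu}^{(m_0 + m_1 + \cdots + m_{d-2})}$ equals the length-one multiplicity sequence $(m_{d-1})$, corresponding to an $m_{d-1}$-tuple of hyperplanes. One more cover step in $\Delta$ is needed, namely the one provided by the first case of the definition in \parref{unirationality-numbers} (since this sequence has $d_c = 1$), sending $(m_{d-1})$ to $\varnothing$. Counting inclusively, the interval $[\boldsymbol{0}, \boldsymbol{\mu}]^\Delta$ therefore contains exactly $m_0 + m_1 + \cdots + m_{d-2} + 2$ elements. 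Substituting into \parref{bounds-compute-r} yields the claimed equality $r(d) = m_0 + \cdots + m_{d-2}$.

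There is no substantive obstacle here beyond the small piece of bookkeeping of the two endpoints of the interval and the terminal cover step from $(m_{d-1})$ to $\varnothing$, which accounts for the constant $+2$ appearing in the formula of \parref{bounds-compute-r}. The only preliminary I would double-check is that the operator $\Delta_i$ indeed models the map $\boldsymbol{\mu} \mapsto \boldsymbol{\mu}'$ in the sense stated just before \parref{bounds-mij-and-mu}; granting this, the corollary follows from a direct substitution.
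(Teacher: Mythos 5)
Your proposal is correct and follows precisely the same route as the paper: apply \parref{bounds-mij-and-mu} to see that the interval \([\boldsymbol{0},\boldsymbol{\mu}]^\Delta\) has length \(m_0 + \cdots + m_{d-2} + 2\), then substitute into \parref{bounds-compute-r}. The bookkeeping of the final cover step \((m_{d-1}) \precdot \varnothing\) and the two endpoints is exactly what the paper leaves implicit before stating the corollary with a \(\qed\).
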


To compute the \(m_{i,j}\), a direct computation shows that \(\Delta_i^{m_i}\)
may be expressed as
\begin{align*}
F_{i+1}(x)
& = (1-x)^{-m_i} F_i(x) - (x^i + x^{i+1}) \cdot \sum\nolimits_{k = 0}^{m_i} (1-x)^{-k} \\
& = (1-x)^{-m_i} F_i(x) - (x^i + x^{i+1}) \cdot x^{-1}(1-x)\big((1-x)^{-m_i} - 1) \\
& = (1-x)^{-m_i} F_i(x) + ((1-x)^{-m_i} - 1) \cdot (x^{i+1} - x^{i-1}).
\end{align*}
Extracting the coefficient of \(x^{i+j+1}\) then gives recursive formulae
for the \(\{m_{i+1,j}\}_{j \geq 0}\):

\begin{Lemma}\label{compute-rij}
For each \(i \geq 0\) and each \(j \geq 1\),
\begin{align*}
\pushQED{\qed}
m_{i+1} & \coloneqq m_{i+1,0} = \frac{1}{2} m_i^2 - \frac{1}{2} m_i + m_{i,1},\;\;\text{and} \\
m_{i+1,j}
& = \frac{1}{j+2}\binom{m_i + j - 1}{j}(m_i^2 + (j-1) m_i + 2)
+ \sum\nolimits_{k = 0}^j \binom{m_i + j - k - 1}{j - k} m_{i,k+1}.
\qedhere
\popQED
\end{align*}
\end{Lemma}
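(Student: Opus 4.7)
The plan is to read off $m_{i+1,j}$ as the coefficient of $x^{i+j+1}$ on both sides of the closed-form expression
\[
F_{i+1}(x) = (1-x)^{-m_i} F_i(x) + \big((1-x)^{-m_i} - 1\big) \cdot (x^{i+1} - x^{i-1})
\]
displayed immediately before the lemma, using only the binomial series for $(1-x)^{-m_i}$ and the defining expansion $F_i(x) = \sum_{k \geq 0} m_{i,k} x^{i+k}$.

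Concretely, expanding $(1-x)^{-m_i} = \sum_{\ell \geq 0} \binom{m_i + \ell - 1}{\ell} x^\ell$ and matching exponents $\ell + k = j+1$ in the product, the first summand contributes
\[
[x^{i+j+1}] \big((1-x)^{-m_i} F_i(x)\big) = \sum_{k=0}^{j+1} \binom{m_i + j - k}{j+1-k} m_{i,k}.
\]
The second summand contributes $\binom{m_i+j-1}{j}$ from the $x^{i+1}$ shift and $-\binom{m_i+j+1}{j+2}$ from the $-x^{i-1}$ shift, together with an extra correction of $-1$ in the $j=0$ case coming from the leftover $-x^{i+1}$ term.

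The central step is to isolate the $k=0$ summand $\binom{m_i+j}{j+1}\, m_i$ of the first sum and combine it with the pure binomial pieces just enumerated. The target identity is
\[
\binom{m_i+j}{j+1} m_i + \binom{m_i+j-1}{j} - \binom{m_i+j+1}{j+2} = \tfrac{1}{j+2}\binom{m_i+j-1}{j}\big(m_i^2 + (j-1)m_i + 2\big),
\]
which follows by factoring out $\binom{m_i+j-1}{j}$ via $\binom{m_i+j}{j+1} = \tfrac{m_i+j}{j+1}\binom{m_i+j-1}{j}$ and $\binom{m_i+j+1}{j+2} = \tfrac{(m_i+j)(m_i+j+1)}{(j+1)(j+2)}\binom{m_i+j-1}{j}$, then clearing the common denominator $(j+1)(j+2)$; a short expansion collapses the resulting numerator to $(j+1)\big(m_i^2 + (j-1)m_i + 2\big)$.

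The remaining tail $\sum_{k=1}^{j+1} \binom{m_i+j-k}{j+1-k} m_{i,k}$ reindexes by $k \mapsto k+1$ to match exactly the sum stated in the formula when $j \geq 1$. The base case $j=0$ proceeds from the same extraction, now retaining the $-1$ correction; a direct simplification yields $\tfrac{1}{2}m_i^2 - \tfrac{1}{2}m_i + m_{i,1}$. The only real obstacle is the binomial identity displayed above; everything else is bookkeeping with power series.
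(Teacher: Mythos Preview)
Your argument is correct and follows exactly the paper's approach: the paper derives the closed form \(F_{i+1}(x) = (1-x)^{-m_i} F_i(x) + ((1-x)^{-m_i} - 1)(x^{i+1} - x^{i-1})\) and then simply states that ``extracting the coefficient of \(x^{i+j+1}\) then gives recursive formulae,'' leaving the binomial bookkeeping implicit. You have supplied precisely those details, including the key identity collapsing the \(k=0\) term with the two shifted binomial pieces.
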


These formulae imply that the \(m_i\) grow quite quickly:

\begin{Lemma}\label{lower-bound}
\(m_i^2 < 2m_{i+1}\) for all \(i \geq 1\). In particular,
\(2^{1 + 2^{i-4}} < m_i\) for all \(i \geq 5\).
\end{Lemma}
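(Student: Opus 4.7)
The plan is to reduce the first inequality to a cleaner equivalent form, verify it directly for small $i$, and run a short algebraic argument for $i \geq 3$; the second inequality then follows by a routine logarithmic induction. By \parref{compute-rij}, $m_{i+1} = \tfrac{1}{2} m_i^2 - \tfrac{1}{2} m_i + m_{i,1}$, which rearranges to $2m_{i+1} - m_i^2 = 2m_{i,1} - m_i$, so $m_i^2 < 2m_{i+1}$ is equivalent to $m_i < 2 m_{i,1}$, and this is the form I would establish. For $i = 1,2$, I compute the power series directly: from $F_0 = 1$ and $m_0 = 1$ one gets $F_1 = \Delta_0 F_0 = x^2/(1-x)$, so $m_1 = 0$ and $m_{1,1} = 1$; since $m_1 = 0$, also $F_2 = F_1$, giving $m_2 = 1$ and $m_{2,1} = 1$. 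The inequalities $0 < 2$ and $1 < 2$ then settle these cases.

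For $i \geq 3$, substitute the recursions of \parref{compute-rij} for both $m_i$ and $m_{i,1}$ in terms of $m_{i-1}$, $m_{i-1,1}$, and $m_{i-1,2}$, and rearrange. A direct calculation yields
\[
2 m_{i,1} - m_i = \tfrac{1}{6}\, m_{i-1}^2 (4 m_{i-1} - 3) + \tfrac{11}{6}\, m_{i-1} + (2 m_{i-1} - 1)\, m_{i-1,1} + 2 m_{i-1,2},
\]
in which every summand is nonnegative and the first is strictly positive as soon as $m_{i-1} \geq 1$. The condition $m_{i-1} \geq 1$ holds for all $i \geq 3$ since $m_2 = 1$ and a short induction from the two recursions shows that $m_j \geq 1$ for all $j \geq 2$.

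The final bound follows by elementary induction. Taking logarithms in $m_{i+1} > m_i^2/2$ yields $\log_2 m_{i+1} > 2 \log_2 m_i - 1$, so setting $h(i) \coloneqq \log_2 m_i - 1$ gives $h(i+1) > 2 h(i)$. A direct recursive computation produces $m_5 = 11$, hence $h(5) > 2 = 2^{5-4}$, and induction then gives $h(i) > 2^{i-4}$, equivalent to $m_i > 2^{1 + 2^{i-4}}$, for every $i \geq 5$.

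The main technical step is the rearrangement of $2 m_{i,1} - m_i$ into the displayed sum for $i \geq 3$; once the cubic-in-$m_{i-1}$ contribution is isolated, positivity is immediate. The only real subtlety is that this dominating argument breaks down for $i \leq 2$ because $m_{i-1} \in \{0,1\}$, forcing the separate verification via the explicit power series $F_1 = F_2 = x^2/(1-x)$.
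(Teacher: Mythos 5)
Your proof is correct and the algebra checks out. The route is closely related to the paper's but not identical, and both are short manipulations of the same recursion \parref{compute-rij}. The paper rewrites $m_{i,1}$ as
\[
m_{i,1} = \tfrac{2}{3}(m_{i-1}+1)\,m_i + \tfrac{1}{3}(m_{i-1}-2)\,m_{i-1,1} + m_{i-1} + m_{i-1,2},
\]
deduces the essentially stronger bound $m_{i,1} \geq m_i$ once $m_{i-1} \geq 3$ (so for $i \geq 5$), and then explicitly checks $1 \leq i \leq 4$. You instead expand the exact quantity $2m_{i,1} - m_i$ needed for the equivalence, getting a sum of nonnegative terms that is already strictly positive once $m_{i-1} \geq 1$, which cuts the explicit verification down to just $i = 1, 2$. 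A small bonus of your decomposition is that it only relies on $m_{i-1} \geq 1$ together with nonnegativity of $m_{i-1,1}$ and $m_{i-1,2}$; the latter is most economically justified via \parref{bounds-mij-and-mu}, which interprets the $m_{i,j}$ as multiplicities and hence nonnegative integers, since the positivity result \parref{positive-expression} comes later in the paper. Your treatment of the second assertion (the doubly-exponential lower bound via $h(i) = \log_2 m_i - 1$ and $h(i+1) > 2h(i)$ with base case $m_5 = 11$) is a clean and correct supplement; the paper's proof does not spell out this step.
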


\begin{proof}
This is true by explicit computation for \(1 \leq i \leq 4\). Assume
\(i \geq 5\) so that \(m_{i-1} \geq 3\). Using the expressions for \(m_i\) and
\(m_{i,1}\) from \parref{compute-rij} gives
\begin{align*}
m_{i,1}
& = \frac{1}{3} m_{i-1}^3 + \frac{2}{3} m_{i-1} + m_{i-1} m_{i-1,1} + m_{i-1,2} \\
& = \frac{2}{3}(m_{i-1} + 1) m_i + \frac{1}{3}(m_{i-1} - 2) m_{i-1,1} + m_{i-1} + m_{i-1,2}
\geq m_i.
\end{align*}
This coarse lower bound then gives \(2m_{i+1} = m_i^2 - m_i + 2m_{i,1} > m_i^2\).
\end{proof}

\begin{figure}
\begin{tabular}{r|cccc}
\(m_{i,j}\) & 0 & 1 & 2 & 3 \\
\hline
3 & 1 & 3 & 4 & 5 \\
4 & 3 & 8 & 13 & 19 \\
5 & 11 & 48 & 127 & 275 \\
6 & 103 & 1106 & 7051 & 33955 \\
7 & 6359 & 485280 & 21029990 & 654279500 \\
8 & 20700541 & 88819638509 & 214404499562520 & 368104651084030885
\end{tabular}
\caption{The coefficients \(m_{i,j}\) for \(3 \leq i \leq 8\) and \(0 \leq j \leq 3\).}
\label{bounds-table}
\end{figure}

The first few quantities are simple to determine:
\(F_1(x) = F_2(x) = x^2(1-x)^{-1}\),
so that \(m_1 = 0\) and \(m_{1,j+1} = m_{2,j} = 1\) for all \(j \geq 0\).
Computing further gives
\begin{align*}
F_3(x) & = x^3\big(-1 + (1-x)^{-1} + (1-x)^{-2}\big), \\
F_4(x) & = x^4\big(-1 + (1-x)^{-1} + 2(1-x)^{-2} + (1-x)^{-3}\big), \;\text{and}\\
F_5(x) & = x^5\big(-1 + (1-x)^{-2} + 3(1-x)^{-3} + 4(1-x)^{-4} + 3(1-x)^{-5} + (1-x)^{-6}\big),
\end{align*}
so that \(m_3 = 1\), \(m_4 = 3\), and \(m_5 = 11\); further computations are
displayed in Figure \parref{bounds-table}. Applying the binomial formula
shows that the \(m_{i,j}\) may be expressed as a polynomial in \(j \geq 1\) of
degrees \(1\), \(2\), and \(5\), respectively. This type of structure persists
for all \(i \geq 3\):

\begin{Lemma}\label{positive-expression}
For each \(i \geq 3\), there exists unique integers \(a_{i,k} \geq 0\) such that
\[
F_i(x) = x^i\Big(-1 + \sum\nolimits_{k = 1}^{m_0 + m_1 + \cdots + m_{i-1}} a_{i,k} (1-x)^{-k}\Big).
\]
In particular, there exists a polynomial \(f_i(t) \in \mathbf{Q}_{\geq 0}[t]\)
such that \(f_i(j) = m_{i,j}\) for each \(j \geq 1\).
\end{Lemma}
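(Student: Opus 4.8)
The plan is induction on $i \ge 3$, using the recursion expressing $F_{i+1}$ in terms of $F_i$ displayed just before \parref{compute-rij}; the base case $i = 3$ is the explicit expansion $F_3(x) = x^3\big(-1 + (1-x)^{-1} + (1-x)^{-2}\big)$ recorded above, giving $a_{3,1} = a_{3,2} = 1$ and $m_0 + m_1 + m_2 = 2$. Uniqueness of the $a_{i,k}$ is automatic, since $1, (1-x)^{-1}, (1-x)^{-2}, \dots$ are linearly independent over $\mathbf{Q}$. The key move is the substitution $y \coloneqq (1-x)^{-1}$, i.e.\ $x = 1 - 1/y$: writing $N_i \coloneqq m_0 + \cdots + m_{i-1}$ and $G_i(y) \coloneqq y^i F_i(1 - 1/y)$, the assertion for $F_i$ is equivalent to $G_i(y) = (y-1)^i P_i(y)$ for a polynomial $P_i(y) = -1 + \sum_{k=1}^{N_i} a_{i,k} y^k \in \mathbf{Z}[y]$ with every $a_{i,k} \ge 0$. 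Two normalizations will be used repeatedly: $P_i(0) = -1$, and $P_i(1) = m_{i,0} = m_i$ (the value at $y = 1$ of $F_i(x)/x^i$ is its constant term $m_{i,0}$).

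Plugging $x = 1 - 1/y$ into $F_{i+1}(x) = (1-x)^{-m_i} F_i(x) + \big((1-x)^{-m_i} - 1\big)(x^{i+1} - x^{i-1})$ and multiplying by $y^{i+1}$ turns this, via $(y-1)^{i+1} - y^2(y-1)^{i-1} = (y-1)^{i-1}(1-2y)$, into $G_{i+1}(y) = y^{1+m_i} G_i(y) + (y^{m_i}-1)(y-1)^{i-1}(1-2y)$. Substituting $G_i = (y-1)^i P_i$ and extracting $(y-1)^{i-1}$ gives $G_{i+1} = (y-1)^{i-1} Q$ with $Q(y) \coloneqq y^{1+m_i}(y-1) P_i(y) + (y^{m_i}-1)(1-2y)$; writing $y^{m_i} - 1 = (y-1)\sum_{l=0}^{m_i-1} y^l$ shows $Q/(y-1) = y^{1+m_i} P_i(y) + (1-2y)\sum_{l=0}^{m_i-1} y^l$ has value $P_i(1) - m_i = 0$ at $y = 1$, so $(y-1)^2 \mid Q$ and $P_{i+1} \coloneqq Q/(y-1)^2 \in \mathbf{Z}[y]$ satisfies $G_{i+1} = (y-1)^{i+1} P_{i+1}$, equivalently $F_{i+1}(x) = x^{i+1} P_{i+1}\big((1-x)^{-1}\big)$. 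The normalization $P_{i+1}(0) = Q(0) = -1$ follows from $Q(0) = (0-1)(1-0)$.

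It remains to show the non-constant coefficients of $P_{i+1}$ are non-negative, i.e.\ that $P_{i+1}(y) + 1 = y\,S(y)$ with $S \in \mathbf{Z}_{\ge 0}[y]$; this is the real content. Write $P_i(y) = y\widetilde{P}_i(y) - 1$ with $\widetilde{P}_i(y) \coloneqq \sum_{k=1}^{N_i} a_{i,k} y^{k-1} \in \mathbf{Z}_{\ge 0}[y]$, so $\widetilde{P}_i(1) = m_i + 1$. A direct expansion gives $Q(y) + (y-1)^2 = y^{2+m_i}(y-1)\widetilde{P}_i(y) + y^{m_i}(1-y-y^2) + y^2$, divisible by $y$ since $m_i \ge 1$ for $i \ge 3$; dividing by $y$, using $1-y-y^2 = -(y-1)(y+2) - 1$ to pull out a factor $y-1$, and noting $(y - y^{m_i-1})/(y-1)$ is a polynomial, one reaches $P_{i+1}(y) + 1 = y\,S(y)$ with $S(y) = E(y)/(y-1)$ and $E(y) = y^{1+m_i}\widetilde{P}_i(y) - y^{m_i-1}(y+2) + (y - y^{m_i-1})/(y-1)$; since $E(1) = (m_i + 1) - 3 + (2 - m_i) = 0$, indeed $S \in \mathbf{Z}[y]$. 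Finally, writing $E(y) = \sum_n e_n y^n$ one has $S(y) = -\sum_n (e_0 + \cdots + e_n) y^n$, so $S \ge 0$ amounts to all partial sums $e_0 + \cdots + e_n$ being $\le 0$. This is the crux: the negative coefficients of $E$ all occur in degrees $\le m_i$, whereas $y^{1+m_i}\widetilde{P}_i(y)$ contributes only in degrees $\ge 1 + m_i$, so the sequence of partial sums first decreases to $-(m_i + 1)$ at degree $m_i$ and then climbs by non-negative steps back to $E(1) = 0$, staying $\le 0$ throughout. This also bounds $\deg P_{i+1} = \deg(y\,S) \le 1 + m_i + (N_i - 1) = N_{i+1}$, completing the induction. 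The main obstacle is precisely this sign-accounting — pinning down which coefficients of $E$ are negative and verifying the partial-sum inequality.

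For the final clause, expand $(1-x)^{-k} = \sum_{j \ge 0} \binom{k+j-1}{k-1} x^j$ in $F_i(x)/x^i = -1 + \sum_{k=1}^{N_i} a_{i,k} (1-x)^{-k} = \sum_{j \ge 0} m_{i,j} x^j$ to obtain $m_{i,j} = \sum_{k=1}^{N_i} a_{i,k} \binom{k+j-1}{k-1}$ for $j \ge 1$; since $\binom{k+t-1}{k-1} = \frac{1}{(k-1)!}(t+1)(t+2)\cdots(t+k-1) \in \mathbf{Q}_{\ge 0}[t]$ and each $a_{i,k} \ge 0$, the polynomial $f_i(t) \coloneqq \sum_{k=1}^{N_i} a_{i,k} \binom{k+t-1}{k-1} \in \mathbf{Q}_{\ge 0}[t]$ satisfies $f_i(j) = m_{i,j}$ for all $j \ge 1$.
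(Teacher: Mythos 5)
Correct, and the computations check out: the recursion \(G_{i+1}(y)=y^{1+m_i}G_i(y)+(y^{m_i}-1)(y-1)^{i-1}(1-2y)\), the normalizations \(P_i(0)=-1\) and \(P_i(1)=m_i\), the factorization \(Q+(y-1)^2=y(y-1)E(y)\) with \(E(1)=0\), and the partial-sum criterion are all right; moreover the sign-accounting you flag as the remaining crux does go through, since the low-degree part of \(E\) is exactly \(-(y+\cdots+y^{m_i-2})-2y^{m_i-1}-y^{m_i}\) for \(m_i\ge 3\) (and \(-y-1\), resp.\ \(-2y-y^2\), for \(m_i=1,2\)), so the partial sums fall from \(0\) to \(-(m_i+1)\) at degree \(m_i\) and then climb back to \(E(1)=0\) in the non-negative steps \(a_{i,k}\). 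The paper runs the same induction on the same recursion with the same base case, but executes the positivity step in the basis \(\{(1-x)^{-k}\}\): it splits the middle term into \(\sum_k a_{i,k}=m_i+1\) summands \((1-x)^{-m_i-k_\ell}\) (your \(\widetilde{P}_i(1)=m_i+1\)) and pairs each against negative terms via the telescoping identity \((1-x)^{-a}-(1-x)^{-b}-x(1-x)^{-b-1}=x\sum_{k=b+2}^{a}(1-x)^{-k}\). Your substitution \(y=(1-x)^{-1}\) together with the partial-sum reformulation of ``non-negative quotient by \(y-1\)'' is the same counting in different clothes, but it buys robustness: no explicit pairing has to be chosen (the paper's printed regrouping in fact contains index slips --- as displayed it does not reproduce \(F_4\) from \(F_3\) --- though the intended telescoping argument is clear), the degree bound \(\deg P_{i+1}\le N_{i+1}\) falls out automatically, and your interpolating polynomial \(f_i(t)=\sum_k a_{i,k}\binom{t+k-1}{k-1}\) uses the correct binomial (the paper's displayed \(\binom{t+k-1}{k}\) is off by one). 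For a final write-up, two small points: justify \(m_i\ge 1\) for \(i\ge 3\) (immediate by induction from \(m_3=1\) via \(m_{i+1}=\tfrac12 m_i(m_i-1)+m_{i,1}\) and \(m_{i,1}=\sum_k k\,a_{i,k}\ge 1\)), and actually display the low-degree coefficients of \(E\) as above rather than asserting where the negative ones sit.
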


\begin{proof}
The polynomial in the latter statement is:
\[
f_i(t) \coloneqq
\sum\nolimits_{k = 1}^{m_0 + m_1 + \cdots + m_{i-1}} a_{i,k} \binom{t + k - 1}{k}.
\]
Construct the asserted decomposition by induction, with the base case \(i = 3\)
being verified by the explicit computation above. Let \(i \geq 3\)
and inductively assume that such a decomposition exists for \(F_i(x)\).
Combined with the first expression for
\(F_{i+1}(x) = \Delta^{m_i}_i F_i(x)\) given above \parref{compute-rij}, this
gives
\begin{align*}
F_{i+1}(x)
& = x^i\Big(-(1-x)^{-m_i}
  + \sum\nolimits_{k = 1}^{m_0 + m_1 + \cdots + m_{i-1}} a_{i,k} (1-x)^{-m_i-k}
  - (1+x)\sum\nolimits_{k = 0}^{m_i-1} (1-x)^{-k}\Big).
\end{align*}
Write the central term as a sum of the form
\[
\sum\nolimits_{k = 1}^{m_0 + m_1 + \cdots + m_{i-1}} a_{i,k}(1-x)^{-m_i-k} =
\sum\nolimits_{\ell \in \Lambda} (1-x)^{-m_i-k_\ell}
\]
for some index set \(\Lambda\) of size \(\sum_k a_{i,k} = m_i + 1\).
Choose any indexing \(\Lambda = \{\ell_0,\ell_1,\ldots,\ell_{m_i}\}\) and
now rearrange the internal sum as
\begin{multline*}
F_{i+1}(x) = x^i\Big(
-1 + \Big(\sum\nolimits_{k = 0}^{m_i - 2} (1-x)^{-m_i-j_{\ell_k}} - (1-x)^{-k} - x(1-x)^{-k-1}\Big) \\
+ \big((1-x)^{-m_i-j_{\ell_{m_i-1}}} - (1-x)^{-m_i-1}\big)
+ \big((1-x)^{-m_i-j_{\ell_{m_i}}} - (1-x)^{-m_i}\big)
\Big).
\end{multline*}
That \(F_{i+1}(x)\) has the desired form now follows from the following formula,
valid for any \(a \geq b+1\):
\[
(1-x)^{-a} - (1-x)^{-b} - x(1-x)^{-b-1} = x \cdot \sum\nolimits_{k = b+2}^a (1-x)^{-k}.
\qedhere
\]
\end{proof}



The next statement bounds \(m_{i,j}\) in terms of \(m_i\). Let
\[
b_{i,j}
\coloneqq \binom{m_i + j - 1}{j} \cdot m_i^{-j} 
= \prod\nolimits_{k = 1}^{j-1} \Bigg(\frac{1}{k+1} + \frac{k}{k+1} \cdot \frac{1}{m_i}\Bigg),
\]
which are decreasing in \(i\) by \parref{lower-bound},
and define real numbers \(c_{i,j}\) for \(i \geq 7\) and \(j \geq 1\)
inductively as follows: Set \(c_{7,j} = 1\) for all \(j \geq 1\). Then
for \(i \geq 7\), once the \(c_{i,j}\) have been defined, let
\[
c_{i+1,j} \coloneqq
2^{1+j/2}\Bigg[
\frac{b_{i,j}}{j+2}\big(1 + (j-1)(2m_{i+1})^{-1/2} + m_{i+1}^{-1}\big)
+ \sum\nolimits_{k = 0}^j b_{i,j-k}c_{i,k+1}(2m_{i+1})^{-(k+1)/4}
\Bigg].
\]

\begin{Proposition}\label{bounds-mij}
\(m_{i,j} \leq c_{i,j} m_i^{1+j/2}\) with \(c_{i,j} \leq 1\)
for any \(i \geq 7\) and \(j \geq 1\).
\end{Proposition}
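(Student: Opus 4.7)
My plan is to proceed by induction on \(i \geq 7\), establishing jointly the two bounds \(m_{i,j} \leq c_{i,j} m_i^{1+j/2}\) and \(c_{i,j} \leq 1\) for all \(j \geq 1\). The base case \(i = 7\) reduces to \(m_{7,j} \leq m_7^{1+j/2}\), since \(c_{7,j} \coloneqq 1\); using the polynomial form \(m_{7,j} = f_7(j)\) from \parref{positive-expression}, this splits naturally into a finite numerical verification for \(j\) below an explicit threshold---which I would delegate to the code of \cite{unirationality-bounds}---together with an asymptotic comparison for large \(j\), where the exponential growth of \(m_7^{1+j/2} = 6359^{1+j/2}\) ultimately dominates the fixed-degree polynomial growth of \(f_7\).

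For the inductive step, I would first establish the inequality \(m_{i+1,j} \leq c_{i+1,j} m_{i+1}^{1+j/2}\) directly from the recursion of \parref{compute-rij}. Writing each binomial coefficient as \(\binom{m_i + j - k - 1}{j-k} = b_{i, j-k} m_i^{j-k}\), substituting the inductive estimate \(m_{i,k+1} \leq c_{i,k+1} m_i^{1+(k+1)/2}\), and converting each power of \(m_i\) to a power of \(m_{i+1}\) via \(m_i^a \leq (2m_{i+1})^{a/2}\) from \parref{lower-bound}, every term on the right-hand side takes the form \(2^{1+j/2} m_{i+1}^{1+j/2}\) times an explicit correction factor. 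Summing and factoring reproduces exactly the bracket in the definition of \(c_{i+1,j}\); in particular the constant \(2\) accompanying \(m_i^j\) in the first summand of the recurrence combines with the conversion constant to yield the cleaner \(m_{i+1}^{-1}\) rather than \((2m_{i+1})^{-1}\) in the final form.

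The delicate part is verifying \(c_{i+1,j} \leq 1\) uniformly in \(j\). Using the inductive hypothesis \(c_{i,k+1} \leq 1\) together with the identity \(b_{i,j} = \frac{1}{j!}\prod_{k=1}^{j-1}(1 + k/m_i)\), the sum \(\sum_{k=0}^{j} b_{i,j-k} c_{i,k+1} (2m_{i+1})^{-(k+1)/4}\) is dominated by its first few terms thanks to geometric decay in the tiny quantity \((2m_{i+1})^{-1/4}\), while the overall prefactor \(2^{1+j/2}/j!\) vanishes as \(j \to \infty\). Since \(m_i\) grows doubly exponentially by \parref{lower-bound}, each source of growth in \(c_{i+1,j}\) is strictly smaller than its analogue in \(c_{i,j}\), so one expects the estimate to only tighten with \(i\).

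I expect the main obstacle to be making this uniformity in \(j\) rigorous: one must cleanly separate the small-\(j\) regime (where the bound depends sensitively on \(m_i\) and the dominant contribution is \(\frac{b_{i,j}}{j+2} \cdot 2^{1+j/2}\)) from the large-\(j\) regime (where factorial decay of \(2^{1+j/2}/j!\) takes over), and to combine numerical verification of the first few base cases with uniform analytic bounds governing the remainder of the induction.
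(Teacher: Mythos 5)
Your proposal is essentially the paper's argument. The inductive step for \(m_{i+1,j}\) is exactly \parref{bounds-inductive}\ref{bounds-inductive.rs}, combining the recursion of \parref{compute-rij} with \(m_i < (2m_{i+1})^{1/2}\) from \parref{lower-bound}, and your closing observation that every input to the formula for \(c_{i+1,j}\) (namely \(b_{i,j}\) and the negative powers of \(m_{i+1}\), together with \(c_{i,k+1}\)) shrinks as \(i\) grows is exactly the monotonicity \parref{bounds-inductive}\ref{bounds-inductive.cs}. The one refinement to make explicit is that this monotonicity collapses your ``delicate part'' to the single numerical base case \(c_{8,j} \leq 1\), settled via \(b_{7,j} < 4^{-(j-2)}\) and a geometric series for \(j \geq 3\), so the uniformity in \(i\) you flagged as the main obstacle is automatic rather than something to re-establish at each step; and for the other base case \(m_{7,j} \leq m_7^{1+j/2}\), the paper replaces a crossover argument with the cleaner observation that the interpolating polynomial \(f_7\) of \parref{positive-expression} satisfies \(f_7'(t) \leq \tfrac{1}{2}\log m_7 \cdot f_7(t)\) for all \(t \geq 2\), thanks to \(a_{7,k} \geq 0\).
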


The proof of \parref{bounds-mij} proceeds inductively on \(i \geq 7\) via the
following statement:

\begin{Lemma}\label{bounds-inductive}
Let \(i \geq 7\). Then the following inductive statements hold:
\begin{enumerate}
\item\label{bounds-inductive.rs}
If \(m_{i,j} \leq c_{i,j} m_i^{1+j/2}\) for all \(j \geq 1\), then
\(m_{i+1,j} \leq c_{i+1,j} m_{i+1}^{1+j/2}\) for all \(j \geq 1\).
\item\label{bounds-inductive.cs}
If \(c_{i+1,j} \leq c_{i,j}\) for all \(j \geq 1\), then
\(c_{i+2,j} \leq c_{i+1,j}\) for all \(j \geq 1\).
\end{enumerate}
\end{Lemma}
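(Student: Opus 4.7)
The plan is to prove (i) by substituting the recursive formula for $m_{i+1,j}$ from \parref{compute-rij} into the desired bound and carefully tracking exponents, and to prove (ii) by termwise monotonicity in $i$.

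For part (i), I would begin by rewriting the two pieces of the formula from \parref{compute-rij} using the factorizations $\binom{m_i + j - 1}{j} = b_{i,j} m_i^j$ and $\binom{m_i + j - k - 1}{j-k} = b_{i,j-k} m_i^{j-k}$. Next, applying the inductive hypothesis gives $m_{i,k+1} \leq c_{i,k+1} m_i^{1+(k+1)/2}$ for each $0 \leq k \leq j$. The crucial analytic input is the bound $m_i^2 \leq 2m_{i+1}$ from \parref{lower-bound}, which yields $m_i^\alpha \leq (2m_{i+1})^{\alpha/2}$ for every $\alpha \geq 0$. Applying this to the three summands of $m_i^2 + (j-1)m_i + 2$ separately gives the bound $2m_{i+1}\bigl(1 + (j-1)(2m_{i+1})^{-1/2} + m_{i+1}^{-1}\bigr)$, matching exactly the first summand in the definition of $c_{i+1,j}$ after multiplication by $m_i^j \leq (2m_{i+1})^{j/2}$. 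For the second term, the exponent of $m_i$ in each summand simplifies to $j - k + 1 + (k+1)/2 = j + 3/2 - k/2$, which converts under the basic bound into $2^{1+j/2} m_{i+1}^{1+j/2} \cdot (2m_{i+1})^{-(k+1)/4}$, matching the defining formula for $c_{i+1,j}$ on the nose.

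For part (ii), the strategy is to compare the formula for $c_{i+2,j}$ with that for $c_{i+1,j}$ factor by factor. The quantities $b_{i,j}$ are decreasing in $i$, as noted in the excerpt directly from the product expression together with \parref{lower-bound}. Since $m_{i+1}$ is strictly increasing in $i$, the powers $m_{i+1}^{-1}$, $(2m_{i+1})^{-1/2}$, and $(2m_{i+1})^{-(k+1)/4}$ all weakly decrease upon replacing $i$ by $i+1$. The remaining factor $c_{i,k+1}$ is handled by the inductive hypothesis $c_{i+1,k+1} \leq c_{i,k+1}$. As every factor in the bracket defining $c_{i+1,j}$ is monotone nonincreasing in $i$, the desired inequality $c_{i+2,j} \leq c_{i+1,j}$ follows for every $j \geq 1$.

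The main obstacle is bookkeeping rather than ideas: the combinatorial manipulation in (i) is delicate because the exponents of $m_i$ and $m_{i+1}$ must align exactly with the definition of $c_{i+1,j}$, and in particular the rewriting $2^{j/2 + 3/4 - k/4} m_{i+1}^{j/2 + 3/4 - k/4} = 2^{1+j/2} m_{i+1}^{1+j/2} \cdot (2m_{i+1})^{-(k+1)/4}$ is what dictates the specific form of the bracket in the definition of $c_{i+1,j}$. Once this alignment is verified there are no further subtleties, and part (ii) is essentially a corollary of the monotonicity properties already built into the construction.
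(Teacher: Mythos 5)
Your proposal is correct and follows the same route as the paper's proof: for part (i), plug the recursion from \parref{compute-rij} into the bound, apply the inductive hypothesis to each \(m_{i,k+1}\), and use \(m_i^2 \leq 2m_{i+1}\) to convert powers of \(m_i\) into powers of \(m_{i+1}\) matching the definition of \(c_{i+1,j}\); for part (ii), observe termwise monotonicity. You simply spell out the exponent bookkeeping that the paper leaves as a single compressed chain of inequalities.
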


\begin{proof}
For \ref{bounds-inductive.rs}, using the formula from \parref{compute-rij}, the
given hypothesis, and the lower bound of \parref{lower-bound}, and comparing
with the definition of \(c_{i+1,j}\) gives:
\begin{align*}
m_{i+1,j}
& = \frac{b_{i,j}}{j+2}m_i^j(m_i^2 + (j-1)m_i + 2) 
  + \sum\nolimits_{k = 0}^j b_{i,j-k} m_i^{j-k} m_{i,k+1} \\
& \leq \frac{b_{i,j}}{j+2} m_i^j(m_i^2 + (j-1)m_i + 2) 
  + \sum\nolimits_{k = 0}^j b_{i,j-k} c_{i,k+1} m_i^{\frac{2j-k+3}{2}}
\leq c_{i+1,j} m_{i+1}^{1+j/2}.
\end{align*}
For \ref{bounds-inductive.cs}, simply note that \(c_{i+1,j}\) and
\(c_{i+2,j}\) are defined by the same formula, and the hypothesis implies
that each summand defining \(c_{i+2,j}\) is smaller than the corresponding
summand in \(c_{i+1,j}\).
\end{proof}

\begin{proof}[Proof of \parref{bounds-mij}]
In view of \parref{bounds-inductive}, it remains to establish the base
cases for when \(i = 7\). Since \(c_{7,j} = 1\) for all \(j \geq 1\),
the base case for \parref{bounds-inductive}\ref{bounds-inductive.rs} is simply
that
\[
m_{7,j} \leq m_7^{1+j/2}\;\;\text{for all \(j \geq 1\).}
\]
Figure \parref{bounds-table} gives \(m_7 = 6359\) and that the inequality
holds for \(j = 1\) and \(j = 2\):
\[
m_{7,1} = 485280 < 507087.888\ldots = m_7^{3/2}
\;\;\text{and}\;\;
m_{7,2} = 21029990 < 40436881 = m_7^2.
\]
Let \(f_7(t) \in \mathbf{Q}_{\geq 0}[t]\) be the polynomial from
\parref{positive-expression} interpolating the \(m_{7,j}\). The result would
follow from the stronger statement that \(f_7(t) \leq m_7^{1+t/2}\) for all
real \(t \geq 2\). For this, it suffices verify that \(f_7(t)\) grows slower
than \(m_7^{1+t/2}\), which, taking logarithmic derivatives, is equivalent to
\[
f'_7(t) \leq \frac{1}{2}\log m_7 \cdot f_7(t) \;\;\text{for all}\; t \geq 2.
\]
As explained in the proof of \parref{positive-expression}, \(f_7(t)\) may be
written as a sum of binomial coefficients, and so the inequality at hand may
be rearranged and seen to be equivalent to:
\[
0 \leq 
\sum_{k = 1}^{120}\Bigg[
  \Big(\frac{1}{2}\log m_7 - \sum_{\ell = 0}^{k-1} \frac{1}{t+\ell}\Big)
  a_{7,k} \binom{t+k-1}{k}
  \Bigg]
  \;\;\text{for all \(t \geq 2\).}
\]
Since each \(a_{7,k} \geq 0\), it remains to observe that each of the
differences appearing in the sum are positive: for any \(1 \leq k \leq 120\)
and \(t \geq 2\),
\[
\frac{1}{2}\log m_7 - \sum_{\ell = 0}^{k-1} \frac{1}{t+\ell} \geq
\frac{1}{2}\log 6359 - \sum_{\ell = 0}^{119} \frac{1}{2+\ell} =
0.00168\ldots
> 0.
\]

The base case of \parref{bounds-inductive}\ref{bounds-inductive.cs} is
the assertion \(c_{8,j} \leq 1\) for all \(j \geq 1\). By definition of
\(c_{8,j}\), the inequality in question is equivalent to
\begin{equation}\label{bounds-mij.c8j}
\frac{b_{7,j}}{j+2}\big(1 + (j-1)(2m_8)^{-1/2} + m_8^{-1}\big)
+ \sum_{k = 0}^j b_{7,j-k}(2m_8)^{-(k+1)/4}
\leq
\frac{1}{2^{(j+2)/2}}.
\tag{$\star$}
\end{equation}
where \(m_8 = 20700541 > 16777216 = 4^{12}\).
The main point will be to bound the
quantites \(b_{7,j}\), the first few of which may be explicitly computed and
bounded:
\[
b_{7,0} = b_{7,1} = 1, \quad
b_{7,2} = 0.50007\ldots < \frac{2}{3}, \quad
b_{7,3} = 0.16674\ldots < \frac{1}{4}, \quad
b_{7,4} = 0.04170\ldots < \frac{1}{4^2}.
\]
When \(j \geq 5\), each new term in the product will be at most \(1/4\), so a
simple bound is
\[
b_{7,j} =
b_{7,4} \cdot
\prod_{k = 5}^{j-1} \Bigg(\frac{1}{k+1} + \frac{k}{k+1} \cdot \frac{1}{m_7}\Bigg)
< \frac{1}{4^{j-2}}.
\]

The bound \eqref{bounds-mij.c8j} may be verified for \(j = 1\) and \(j = 2\)
by explicit computation:
\begin{align*}
\frac{1}{3}(1 + m_8^{-1}) +
\sum\nolimits_{k = 0}^1 b_{7,j-k}(2m_8)^{-(k+1)/4} =
0.345955\ldots < 0.353553\ldots = \frac{1}{2^{3/2}}, \\
\frac{b_{7,2}}{4}(1 + (2m_8)^{-1/2} + m_8^{-1}) 
+ \sum\nolimits_{k = 0}^2 b_{7,j-k}(2m_8)^{-(k+1)/4} = 0.131430\ldots < 0.176776\ldots = \frac{1}{2^{5/2}}.
\end{align*}
When \(j \geq 3\), the bounds for \(m_8\) and \(b_{7,j}\) together upper bound
the left hand side of \eqref{bounds-mij.c8j} by
\[
\frac{1}{4^{j-2}}\Bigg[
\frac{1}{j+2}\Big(1 + \frac{j-1}{4^6} + \frac{1}{4^{12}}\Big) +
\sum\nolimits_{k = 0}^{j-3} \frac{1}{4^{2k+3}} +
 \frac{2}{3} \cdot \frac{1}{4^{2j-1}} + \frac{1}{4^{2j+2}} + \frac{1}{4^{2j+5}}\Bigg].
\]
The internal sum may be bounded by a geometric series:
\[
\sum\nolimits_{k = 0}^{j-3} \frac{1}{4^{2k+3}} \leq
\frac{1}{4^3} \cdot \frac{1}{1 - 4^{-2}} = \frac{1}{60}.
\]
The remaining terms in the square brackets decrease with \(j\), so are bounded
by their values when \(j = 3\). Thus the entire term in the brackets is bounded
by:
\[
\frac{1}{5}\Big(1 + \frac{2}{4^6} + \frac{1}{4^{12}}\Big) +
\frac{1}{60} +
\frac{2}{3} \cdot \frac{1}{4^5} + \frac{1}{4^8} + \frac{1}{4^{11}}
= 0.217430\ldots < \frac{1}{4}.
\]
Therefore, when \(j \geq 3\), the left hand side of \eqref{bounds-mij.c8j}
is at most \(1/4^{j-1}\), and this is less than \(1/2^{(j+2)/2}\).
\end{proof}

In particular, \parref{bounds-mij} shows that \(m_{i,1} \leq m_i^{3/2}\).
Combined with the formula for \(m_{i+1}\) in \parref{compute-rij} and the
numerical lower bound of \parref{lower-bound}, this gives a bound on
\(m_{i+1}\) in terms of \(m_i\):

\begin{Proposition}\label{bounds-m}
\(\displaystyle
m_{i+1}
< \Bigg(\frac{1}{2} + m_i^{-1/2}\Bigg) m_i^2 
< \Bigg(\frac{1}{2} + \frac{1}{2^{2^{i-5}}}\Bigg)m_i^2
\)
for all \(i \geq 7\).
\qed
\end{Proposition}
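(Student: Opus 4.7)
The proof plan is a short assembly of three ingredients already in hand. Starting from the identity in \parref{compute-rij},
\[
m_{i+1} = \tfrac{1}{2}m_i^2 - \tfrac{1}{2}m_i + m_{i,1},
\]
I drop the negative \(-\tfrac{1}{2}m_i\) term to obtain \(m_{i+1} < \tfrac{1}{2}m_i^2 + m_{i,1}\). The first inequality then reduces to bounding \(m_{i,1}\) by \(m_i^{3/2}\), which is precisely what \parref{bounds-mij} supplies in the case \(j = 1\), since \(c_{i,1} \leq 1\) for all \(i \geq 7\). Combining gives
\[
m_{i+1} < \tfrac{1}{2}m_i^2 + m_i^{3/2} = \Bigl(\tfrac{1}{2} + m_i^{-1/2}\Bigr) m_i^2,
\]
which is the first claimed bound.

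For the second inequality, the plan is to convert the first into a bound involving only a closed form in \(i\), by using the lower bound of \parref{lower-bound}: for \(i \geq 5\), \(m_i > 2^{1 + 2^{i-4}}\). Taking square roots gives \(m_i^{1/2} > \sqrt{2}\cdot 2^{2^{i-5}} > 2^{2^{i-5}}\), hence \(m_i^{-1/2} < 2^{-2^{i-5}}\). Since \(i \geq 7 \geq 5\) this applies, and substituting into the first inequality yields \(m_{i+1} < (1/2 + 1/2^{2^{i-5}}) m_i^2\), completing the proof.

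There is no real obstacle here beyond verifying that the hypotheses of \parref{bounds-mij} and \parref{lower-bound} both apply at \(i \geq 7\), which they do. The step doing essentially all the work is the subquadratic estimate \(m_{i,1} \leq m_i^{3/2}\) from \parref{bounds-mij}; everything else is arithmetic.
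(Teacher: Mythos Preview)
Your proof is correct and follows essentially the same approach as the paper: the paper's own argument, summarized in the sentence preceding the proposition, is precisely to combine the formula \(m_{i+1} = \tfrac12 m_i^2 - \tfrac12 m_i + m_{i,1}\) from \parref{compute-rij}, the bound \(m_{i,1} \leq m_i^{3/2}\) from \parref{bounds-mij}, and the lower bound \(m_i > 2^{1+2^{i-4}}\) from \parref{lower-bound}. Your write-up makes all three steps explicit and is fine as stated.
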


Iteratively applying this bound gives, for any \(i \geq 8\), the first
inequality in
\[
m_i <
\Bigg[\prod_{j = 1}^{i-7}
  \Bigg(\frac{1}{2} + \frac{1}{2^{2^{i-j-5}}}\Bigg)^{2^{j-1}}
\Bigg] m_7^{2^{i-7}} <
2^{2^{i-3} - 2^{i-7}}
\]
where the second inequality comes from grossly bounding the term in the
bracket by \(1\) and noting that \(m_7 < 2^{13} < 2^{2^4 - 1}\). After
explicitly computing for \(i = 6\), a simple induction argument gives the
following coarse upper bound on the sum computing \(r(d)\) from
\parref{bounds-compute-r-explicitly}:

\begin{Proposition}\label{bounds-sum-of-m}
\(m_0 + \cdots + m_i \leq 2^{2^{i-3}}\) for all \(i \geq 6\).
\qed
\end{Proposition}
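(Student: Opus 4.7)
The plan is a straightforward induction on $i \geq 6$, using the coarse iterated bound $m_j < 2^{2^{j-3} - 2^{j-7}}$ from the proof of Proposition \parref{bounds-m} as the only analytic ingredient. For the base case $i = 6$, Figure \parref{bounds-table} combined with the initial values $m_0 = 1$, $m_1 = 0$, $m_2 = m_3 = 1$, $m_4 = 3$, $m_5 = 11$ gives
\[
m_0 + m_1 + \cdots + m_6 = 1 + 0 + 1 + 1 + 3 + 11 + 103 = 120 \leq 256 = 2^{2^3}.
\]

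For the inductive step, assume $S_i \coloneqq m_0 + \cdots + m_i \leq 2^{2^{i-3}}$ at some $i \geq 6$. The iterated bound is valid for $j \geq 7$ (a direct check at $j = 7$ reads $m_7 = 6359 < 2^{15}$, and the argument in the proof of \parref{bounds-m} handles $j \geq 8$); applying it at $j = i+1$ yields
\[
S_{i+1} = S_i + m_{i+1} < 2^{2^{i-3}} + 2^{2^{i-2} - 2^{i-6}}.
\]
Dividing both sides by the target $2^{2^{(i+1)-3}} = 2^{2^{i-2}}$ reduces the inductive step to the elementary inequality
\[
2^{-2^{i-3}} + 2^{-2^{i-6}} \leq 1,
\]
which holds for every $i \geq 6$ since $2^{-2^{i-6}} \leq 1/2$ and the first term is even smaller.

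I expect no real obstacle here. The doubly exponential gap between the inductive bound $2^{2^{i-3}}$ and the target $2^{2^{i-2}}$, together with the factor $2^{-2^{i-6}}$ of slack left over from Proposition \parref{bounds-m}, ensure that the trivial estimate $S_i + m_{i+1}$ already closes the induction without any further combinatorial analysis of the $m_{i,j}$. The only point requiring even mild attention is extending the iterated bound from $j \geq 8$ down to $j = 7$, which is immediate from the tabulated value of $m_7$.
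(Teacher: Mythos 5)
Your proof is correct and fills in exactly the induction the paper indicates: verify the base case $i = 6$ by direct summation, then close the step with the iterated bound $m_j < 2^{2^{j-3} - 2^{j-7}}$ derived after Proposition \parref{bounds-m}, extended to $j = 7$ by inspecting $m_7 = 6359 < 2^{15}$. This is the same approach the paper intends.
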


It remains to compute the quantity \(n(d) \coloneqq n(d,r(d))\) appearing in
\parref{unirationality-main-theorem}. Unlike the function \(r(\mathbf{d})\),
the maximum appearing in the definition of \(n(\mathbf{d},r)\) in
\parref{unirationality-numbers} is often superfluous. Toward this, the
following identifies two situtations for when \(n_0(\mathbf{d},r)\) is larger
than \(n_0(\mathbf{d}',r-1) + 1\):

\begin{Lemma}\label{bounds-bigger-n}
\(n_0(\boldsymbol{\mu}',r-1)+1 \leq n_0(\boldsymbol{\mu},r)\) for a multiplicity
sequence \(\boldsymbol{\mu} = (\mu_1,\ldots,\mu_{d_c})\) and \(r \geq 2\)
whenever one of the two conditions hold:
\begin{enumerate*}
\item\label{bounds-bigger-n.large-dc} \(\max\boldsymbol{\mu} \leq r-2d_c-1\) or
\item\label{bounds-bigger-n.small-dc} \(d_c \leq 4\) and
\end{enumerate*}
\[
\frac{d_c!}{24}\big(
-(12\mu_2 + 28\mu_3 + 46\mu_4) +
(12\mu_2 + 24\mu_3 + 35\mu_4) r +
(4\mu_3 + 10\mu_4) r^2 +
\mu_4 r^3\big)
\leq r^{d_c}.
\]
\end{Lemma}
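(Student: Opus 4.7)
The strategy begins by reformulating the desired inequality as an explicit polynomial bound. Setting
\[
P_d(r) \coloneqq \binom{d+r}{r} - rd - 1
\quad\text{and}\quad
\Psi(r) \coloneqq \binom{d_c+r-1}{d_c} + \binom{d_c+r-2}{d_c-1} - 2,
\]
the plan is first to show that \(n_0(\boldsymbol{\mu}',r-1) + 1 \leq n_0(\boldsymbol{\mu},r)\) is equivalent to the inequality
\[
\sum_{d=1}^{d_c} \mu_d P_d(r) \leq r \Psi(r),
\]
hereafter denoted \((\star)\). To obtain this reformulation, I would substitute the formula for \(\mu_d'\) from \parref{bounds-multiplicity-sequence} into the expression for \(n_0(\boldsymbol{\mu}',r-1)\), swap the order of summation, and simplify each inner sum \(\sum_{d=1}^{d'}\binom{d+r-1}{r-1}\) to \(\binom{d'+r}{r}-1\) via the hockey-stick identity; multiplying the resulting difference \(n_0(\boldsymbol{\mu},r) - n_0(\boldsymbol{\mu}',r-1) - 1\) through by \(r(r-1)\) then yields precisely \(r\Psi(r) - \sum_d \mu_d P_d(r)\).

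Next, to handle condition~(ii), note that \(P_1(r) = 0\) and \(P_d(1) = 0\), so \(P_d(r) = r(r-1) Q_d(r)\) for non-negative polynomials \(Q_2 = 1/2\), \(Q_3 = (r+7)/6\), and \(Q_4 = (r^2+11r+46)/24\). A direct calculation then yields, for \(d_c \leq 4\),
\[
\sum_{d=1}^{d_c} \mu_d P_d(r) = \frac{r(r-1)}{24}\bigl[\mu_4 r^2 + (4\mu_3 + 11\mu_4)r + (12\mu_2 + 28\mu_3 + 46\mu_4)\bigr],
\]
while factoring \((r-1)\) from the cubic displayed in the hypothesis shows that condition~(ii) is equivalent to the upper bound \(\sum_d \mu_d P_d(r) \leq r^{d_c+1}/d_c!\). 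To conclude \((\star)\), observe that
\[
r\binom{d_c+r-1}{d_c} = \frac{r^2(r+1)(r+2)\cdots(r+d_c-1)}{d_c!} \geq \frac{r^{d_c+1}}{d_c!},
\]
and that \(r\binom{d_c+r-2}{d_c-1} - 2r \geq 0\) for \(d_c \geq 2,\, r \geq 2\), which together give \(r\Psi(r) \geq r^{d_c+1}/d_c!\).

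For condition~(i), set \(M \coloneqq r - 2d_c - 1\); the uniform bound \(\mu_d \leq M\) together with the hockey-stick identity \(\sum_{d=0}^{d_c}\binom{r+d}{d} = \binom{r+d_c+1}{d_c}\) give
\[
\sum_d \mu_d P_d(r) \leq M \sum_{d=1}^{d_c} P_d(r) \leq M \binom{r+d_c+1}{d_c},
\]
after dropping the nonpositive correction \(-M[1 + d_c + rd_c(d_c+1)/2]\). Since the ratio \(\binom{r+d_c+1}{d_c}/\binom{r+d_c-1}{d_c}\) equals \((r+d_c+1)(r+d_c)/[(r+1)r]\), the key bound \(M\binom{r+d_c+1}{d_c} \leq r\binom{r+d_c-1}{d_c}\) reduces to the polynomial inequality \(M(r+d_c+1)(r+d_c) \leq r^2(r+1)\); expanding with \(M = r - 2d_c - 1\) produces the manifestly non-negative residual \(r^2 + (3d_c^2+3d_c+1)r + (2d_c+1)d_c(d_c+1)\). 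Combined with \(r\Psi(r) \geq r\binom{r+d_c-1}{d_c}\), valid for \(d_c \geq 2,\, r \geq 2\), this establishes \((\star)\); the boundary case \(d_c = 1\) is trivial since then \(\sum_d \mu_d P_d(r) = 0\).

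The principal obstacle will be the initial algebraic reduction to \((\star)\), which requires careful bookkeeping of the formula for \(\boldsymbol{\mu}'\) and two applications of the hockey-stick identity; once that cleaner form is in hand, both hypotheses reduce to polynomial inequalities that are linear in the \(\mu_d\) and may be verified by straightforward manipulation.
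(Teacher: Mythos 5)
Your proposal is correct and follows essentially the same route as the paper's proof: you reduce to the same intermediate inequality \(\sum_{d}\mu_d\bigl[\binom{d+r}{r}-rd-1\bigr]\le r\bigl[\binom{d_c+r-1}{d_c}+\binom{d_c+r-2}{d_c-1}-2\bigr]\), handle condition (ii) by comparing the explicit cubic on the left with the leading term \(r^{d_c+1}/d_c!\) of the right, and handle condition (i) by the uniform bound \(\mu_d\le r-2d_c-1\), the hockey-stick identity, and a comparison of binomial coefficients. The only cosmetic divergence is in the final polynomial step of (i), where you expand the residual \(r^2(r+1)-M(r+d_c+1)(r+d_c)\) to see it is nonnegative rather than invoking AM--GM as the paper does.
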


\begin{proof}
A direct computation using binomial coefficient identities shows that
\begin{align*}
n_0(\boldsymbol{\mu}',r-1) + 1
& = r + \frac{1}{r-1}\Bigg(
  \sum_{d = 1}^{d_c}\mu_d\Bigg[\binom{d + r}{r} - d - 1\Bigg]
  - \binom{d_c + r - 1}{r - 1}
  - \binom{d_c + r - 2}{r - 1} + 2\Bigg)
\end{align*}
and so \(n_0(\boldsymbol{\mu}',r-1) + 1 \leq n_0(\boldsymbol{\mu},r)\) may be
rearranged to the equivalent inequality
\begin{equation}\label{bounds-bigger-n.reduction}
\sum_{d = 1}^{d_c}\mu_d\Bigg[\binom{d+r}{r} - rd - 1\Bigg] \leq
r\Bigg[\binom{d_c + r - 1}{r - 1} + \binom{d_c + r - 2}{r - 1} - 2\Bigg].
\tag{$\diamond$}
\end{equation}

In the first situation, lower bound the right hand side by its first summand;
for the left hand side, use the common upper bound \(\mu_d \leq r-2d_c-1\) and
coarsely estimate to obtain
\[
\sum_{d = 1}^{d_c}\mu_d\Bigg[\binom{d+r}{r} - rd - 1\Bigg]
\leq (r-2d_c-1)\sum_{d = 1}^{d_c} \binom{d + r}{r}
\leq (r-2d_c-1)\binom{d_c + r + 1}{d_c}.
\]
Putting the two bounds together shows
\(n_0(\boldsymbol{\mu}',r-1) + 1 \leq n_0(\boldsymbol{\mu},r)\) holds whenever
\[
(r-2d_c-1)\binom{d_c + r + 1}{d_c} \leq r\binom{d_c + r - 1}{d_c}.
\]
Expanding the binomial coefficients and simplifying shows that this is equivalent
to
\[
(r-2d_c-1)(r + d_c + 1)(r + d_c) \leq 
(r+1)r^2.
\]
The AM-GM inequality then upper bounds the left side by \(r^3\), yielding the result.

In the second situation, view both sides of \eqref{bounds-bigger-n.reduction}
as polynomials in \(r\); for instance, the left side is
\[
\frac{r}{24}\big(
-(12\mu_2 + 28\mu_3 + 46\mu_4) +
(12\mu_2 + 24\mu_3 + 35\mu_4) r +
(4\mu_3 + 10\mu_4) r^2 +
\mu_4 r^3
\big).
\]
Since \(r\) is positive, the right side of \eqref{bounds-bigger-n.reduction} may be
lower bounded by its leading term \(r^{d_c+1}/d_c!\).
Dividing through by \(r/d_c!\) and combining shows that
\eqref{bounds-bigger-n.reduction} will be satisfied whenever the displayed
inequality in the statement holds.
\end{proof}

From the inequality \eqref{bounds-bigger-n.reduction}, it is clear that
\(n_0(\boldsymbol{\mu}',r-1) + 1 \leq n_0(\boldsymbol{\mu},r)\) only holds
if \(r\) and \(d_c\) are sufficiently compared to the multiplicities \(\mu_d\);
the criteria appearing in \parref{bounds-bigger-n} may then be viewed as two
regimes for when this is the case. These simple bounds are enough to compute
\(n(d)\):

\begin{Proposition}\label{bounds-compute-n}
\(n(d) = n_0(d,m_0 + \cdots + m_{d-2})\) for any integer \(d \geq 3\).
\end{Proposition}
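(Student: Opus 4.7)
The plan is to induct along the chain of multiplicity sequences \(\boldsymbol{\mu}^{(j)}\) starting from \((d)\) with \(r_j \coloneqq r(d) - j\), showing that the maximum in the recursive definition of \(n(\mathbf{d}, r)\) from \parref{unirationality-numbers} is always realized by the \(n_0\) term. Unwinding the recursion down to the base-case multi-degrees then gives \(n(d) = n(d, r(d)) = n_0(d, r(d))\), and Corollary \parref{bounds-compute-r-explicitly} identifies \(r(d) = m_0 + \cdots + m_{d-2}\).

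Concretely, the inequality to verify at each step is
\[
n_0(\boldsymbol{\mu}^{(j+1)}, r_j - 1) + 1 \leq n_0(\boldsymbol{\mu}^{(j)}, r_j),
\]
so that an induction on \(j\) shows \(n(\boldsymbol{\mu}^{(j)}, r_j) = n_0(\boldsymbol{\mu}^{(j)}, r_j)\). Lemma \parref{bounds-bigger-n} supplies two sufficient conditions: the large-\(d_c\) criterion \(\max \boldsymbol{\mu} \leq r - 2d_c - 1\), and a small-\(d_c\) (for \(d_c \leq 4\)) polynomial inequality. By Lemma \parref{bounds-mij-and-mu}, the chain decomposes into blocks indexed by \(0 \leq i \leq d-2\): block \(i\) has maximal degree \(d_c = d - i\), length \(m_i\), starts at the multiplicity sequence \((m_{i,d-i-1}, \ldots, m_{i,0})\) with \(r_j = m_i + \cdots + m_{d-2}\), and terminates at \((m_{i+1,d-i-2}, \ldots, m_{i+1,0})\) where \(d_c\) then drops.

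For blocks with \(d_c \geq 5\), I would invoke the large-\(d_c\) criterion \parref{bounds-bigger-n}\ref{bounds-bigger-n.large-dc}. The bounds from Proposition \parref{bounds-mij} give \(\max \boldsymbol{\mu}^{(j)} \leq m_{i+1, d-i-2} \leq m_{i+1}^{(d-i)/2}\) throughout block \(i\), taking the worst case at its end, while \(r_j \geq m_{d-2}\). Proposition \parref{bounds-m} then shows that \(m_{d-2}\) exceeds \(m_{i+1}^{(d-i)/2}\) by a doubly exponential margin, so the slack \(2 d_c + 1\) is easily absorbed. For the tail blocks with \(d_c \in \{2, 3, 4\}\), the small-\(d_c\) inequality of \parref{bounds-bigger-n}\ref{bounds-bigger-n.small-dc} involves only the multiplicities \(\mu_2, \mu_3, \mu_4\), each bounded by explicit values of \(m_{i,j}\) via \parref{positive-expression} and Figure \parref{bounds-table}, so the verification reduces to a finite case analysis in \(r\). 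Base-case multi-degrees \(\mathbf{d} \in \{(1^c), (1^{c-1}2)\}\) that terminate the chain are absorbed via the explicit formulas in \parref{unirationality-numbers}, and the finitely many small-\(d\) exceptions---where the asymptotic bounds of Propositions \parref{bounds-mij} and \parref{bounds-m} fall short---are settled numerically via the repository \cite{unirationality-bounds}.

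The main obstacle will be the bookkeeping within each block: \(\max \boldsymbol{\mu}^{(j)}\) grows while \(r_j\) shrinks as \(j\) advances through the block, so the worst case must be handled carefully, particularly at block transitions where \(d_c\) drops by one and the chain enters a regime with a new top degree. Because the asymptotic bounds from Propositions \parref{bounds-mij} and \parref{bounds-m} are quite coarse, however, ample slack is available in the large-\(d_c\) regime, and the small-\(d_c\) tail is a finite explicit check.
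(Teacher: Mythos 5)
Your plan is essentially the paper's proof: establish the stepwise inequality $n_0(\boldsymbol{\mu}^{(m)},r-m)+1 \leq n_0(\boldsymbol{\mu}^{(m-1)},r-m+1)$ at each $m$, using \parref{bounds-bigger-n}\ref{bounds-bigger-n.large-dc} with \parref{bounds-mij} and \parref{lower-bound} in the large-$d_c$ range, \parref{bounds-bigger-n}\ref{bounds-bigger-n.small-dc} for $d_c \leq 4$, a direct computation for $d_c \leq 2$, and numerics for $d \leq 7$. Two execution points worth noting: the paper streamlines the large-$d_c$ range by observing that $\mu_1$ only grows, $r-m$ only shrinks, and $d_c \leq d$ uniformly along the chain, so a single check at $m = m_0 + \cdots + m_{d-5}$ suffices rather than re-verifying at each block boundary as your block-by-block plan would; and the $d_c \in \{3,4\}$ tail is not a \emph{finite} case analysis (those blocks have length $m_{d-4}$, $m_{d-3}$, growing with $d$), so one needs a uniform criterion valid across the whole block---the paper uses Fujiwara's root bound plus monotonicity of the quantities within each block.
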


\begin{proof}
The statement holds for \(3 \leq d \leq 7\) by explicit computation. So suppose
\(d \geq 8\) and set \(r \coloneqq r(d) = m_0 + \cdots + m_{d-2}\), the latter
equality by \parref{bounds-compute-r-explicitly}. Let
\(\boldsymbol{\mu} = (0,\ldots,0,1)\) be the multiplicity sequence associated
with the degree \(d\), and write
\(\boldsymbol{\mu}^{(m)} \coloneqq (\mu_1,\ldots,\mu_{d_c})\) for
that obtained upon iterating the penultimate tangent transform \(m\) times. It
suffices to show that
\begin{equation}\label{bounds-compute-n.stepwise-inequality}
n_0(\boldsymbol{\mu}^{(m)}, r-m) + m \leq
n_0(\boldsymbol{\mu}^{(m-1)}, r - m+1) + m - 1
\;\text{for all \(1 \leq m \leq r+1\)}.
\tag{$\heartsuit$}
\end{equation}
Split the range of \(m\) into three segments and use the criteria of
\parref{bounds-bigger-n} in turn:

\textbf{Step 1.}
Apply \parref{bounds-bigger-n}\ref{bounds-bigger-n.large-dc} along the range
\(1 \leq m \leq m_0 + \cdots + m_{d-5}\) by noting that
\(\max\boldsymbol{\mu}^{(m)} = \mu_1\) for any \(1 \leq m \leq r\), from which
it follows that
\(\max\boldsymbol{\mu}^{(m-i)} \leq \max\boldsymbol{\mu}^{(m)}\) for any \(0 \leq i \leq m\),
and that
\[
r - m - 2d - 1 \leq
\min\{(r-m+i) - 2d_c^{(m-i)} - 1 : 0 \leq i \leq m\}
\]
where \(d_c^{(m-i)}\) is the largest degree in the multi-degree corresponding
to \(\boldsymbol{\mu}^{(m-i)}\). Together, this means that if the inequality
\(\max\boldsymbol{\mu}^{(m)} \leq r-m-2d-1\) holds for some \(m \geq 1\),
then the hypothesis of \parref{bounds-bigger-n}\ref{bounds-bigger-n.large-dc}
is satisfied for each \(0 \leq i \leq m\), yielding corresponding inequalities
\[
n_0(\boldsymbol{\mu}^{(m-i)}, r-m+i) + m-i \leq
n_0(\boldsymbol{\mu}^{(m-i-1)}, r-m+i+1) + m-i-1
\]
for each \(0 \leq i \leq m\). Taking \(m = m_0 + \cdots + m_{d-5}\), so that
\[
\boldsymbol{\mu}^{(m)} = (m_{d-4,3}, m_{d-4,2}, m_{d-4,1}, m_{d-4})
\;\;\text{and}\;\;
r - m = m_{d-4} + m_{d-3} + m_{d-2},
\]
the inequality in question becomes
\(m_{d-4,3} \leq m_{d-4} + m_{d-3} + m_{d-2} - 2d - 1\). This may be explicitly
verified for \(8 \leq d \leq 10\). When \(d \geq 11\), \parref{bounds-mij} and
\parref{lower-bound} together show that
\(m_{d-4,3} \leq m_{d-4}^{5/2} \leq 4m_{d-2}^{5/8}\), and so the
desired inequality will be satisfied whenever
\[
4m_{d-2}^{5/8} \leq m_{d-4} + m_{d-3} + m_{d-2} - 2d - 1.
\]
This inequality may be explicitly verified for \(d = 11\). Since \(m_{d-2}\)
grows doubly exponentially in \(d\), this implies that the inequality also
holds for all \(d \geq 11\).

\textbf{Step 2.}
To apply \parref{bounds-bigger-n}\ref{bounds-bigger-n.small-dc} when
\(m_0 + \cdots + m_{d-5} < m \leq m_0 + \cdots + m_{d-3}\), view the
inequality in the hypothesis as a condition for whether or not the polynomial
\[
x^{d_c} -
\frac{d_c!}{24}\big(
\mu_4 x^3 + 
(4\mu_3 + 10\mu_4) x^2 +
(12\mu_2 + 24\mu_3 + 35\mu_4) x +
-(12\mu_2 + 28\mu_3 + 46\mu_4)
\big)
\]
associated with \(\boldsymbol{\mu}^{(m)} = (\mu_1,\mu_2,\mu_3,\mu_4)\) is
nonnegative at \(x = r - m\). This will certainly be the case if \(r - m\) is
larger than any root of this polynomial. Using Fujiwara's bound, see
\cite{Fujiwara}, on the size of roots of a polynomial with complex
coefficients,
\[
\max\{|x| : x^n + a_{n-1}x^{n-1} + \cdots + a_1 x + a_0 = 0\} \leq
2\max\{|a_{n-1}|, \ldots, |a_1|^{1/(n-1)}, |a_0/2|^{1/n}\},
\]
and the fact that \(\mu_1 \geq \mu_2 \geq \mu_3 \geq \mu_4\), the hypothesis of
\parref{bounds-bigger-n}\ref{bounds-bigger-n.small-dc} will be satisfied whenever
\[
r-m \geq 
\begin{dcases*}
2\max\{\mu_4, (14\mu_3)^{1/2}, (71\mu_2)^{1/3}, (43\mu_2)^{1/4}\} & if \(d_c = 4\), and \\
2\max\{\mu_3, (6\mu_2)^{1/2}, (5\mu_2)^{1/3}\} & if \(d_c = 3\).
\end{dcases*}
\]
The left hand side decreases with \(m\), reaching a minimum of
\(m_{d-3} + m_{d-2}\) and \(m_{d-2}\) in the cases \(d_c = 4\) and \(d_c = 3\),
respectively. On the right hand side, \(\mu_{d_c}\) decreases with \(m\),
taking on maxima of \(m_{d-4}\) and \(m_{d-3}\), whereas all other terms
increase with \(m\). Thus it suffices to verify the inequalities
\begin{align*}
m_{d-3} + m_{d-2} & \geq 2\max\{m_{d-4}, (14m_{d-3})^{1/2}, (71m_{d-3,1})^{1/3}, (43m_{d-3,1})^{1/4}\}, \;\text{and} \\
m_{d-2} & \geq 2\max\{m_{d-3}, (6m_{d-2})^{1/2}, (5m_{d-2})^{1/3}\}.
\end{align*}
The second inequality follows easily from \parref{lower-bound}.
As for the first, it may be explicitly verified for
\(d = 8,9\); then when \(d \geq 10\), \parref{bounds-inductive} implies
\(m_{d-3,1}^{1/3} \leq m_{d-3}^{1/2}\), from which the bound follows since, for
instance, \(2 \cdot 71^{1/3} < \sqrt{m_7} \leq \sqrt{m_{d-3}}\). In conclusion,
\eqref{bounds-compute-n.stepwise-inequality} holds in this range.

\textbf{Step 3.}
It remains to consider
\(m_0 + \cdots + m_{d-3} < m \leq m_0 + \cdots + m_{d-2} + 1\). When
\(m \leq m_0 + \cdots + m_{d-2}\), then
\(\boldsymbol{\mu}^{(m-1)} = (\mu_1, r-m+1)\) and \(d_c = 2\), and a direct
computation shows that
\[
n_0(\boldsymbol{\mu}^{(m-1)},r-m-1)-1 - n_0(\boldsymbol{\mu}^{(m)},r-m)
= 2
\]
so \eqref{bounds-compute-n.stepwise-inequality} holds
in this range. Finally, when \(m = m_0 + \cdots + m_{d-2} + 1\), then
\(\boldsymbol{\mu}^{(m-1)} = (m_{d-1})\) so
\(n_0(\boldsymbol{\mu}^{(m-1)}, 0) = m_{d-1}\)
and \(n_0(\boldsymbol{\mu}^{(m)},-1) + 1 = n_0(\boldsymbol{0},-1)+1 = -1\) and
\eqref{bounds-compute-n.stepwise-inequality} holds. Thus
\eqref{bounds-compute-n.stepwise-inequality} holds along each in the
interval \([\boldsymbol{0}, \boldsymbol{\mu}]^\Delta\), and this shows that
\(n(d) = n_0(d,r(d))\), as desired.
\end{proof}

\begin{Theorem}\label{bounds-main-estimate}
\(n(d) \leq 2^{(d-1)2^{d-5}}\) for any \(d \geq 6\).
\end{Theorem}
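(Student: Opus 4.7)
The plan is to combine three ingredients: the identity $n(d) = n_0(d, r(d))$ from \parref{bounds-compute-n}, the doubly exponential bound $r(d) \leq 2^{2^{d-5}}$ obtained by combining \parref{bounds-compute-r-explicitly} with \parref{bounds-sum-of-m}, and an elementary binomial estimate for $\binom{d+r}{d}$ appearing in the formula for $n_0$. Since \parref{bounds-sum-of-m} requires the sum index to be at least $6$, this strategy applies uniformly to $d \geq 8$; the two boundary cases $d = 6, 7$ will be handled by direct numerical comparison with the values of $n(d)$ listed at the start of the section.

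For the main case $d \geq 8$, set $r \coloneqq r(d) = m_0 + \cdots + m_{d-2}$. First I would observe that $r \geq m_{d-2} \geq m_6 = 103 \geq d$ using Figure \parref{bounds-table} together with the monotonicity of $m_i$ guaranteed by \parref{lower-bound}. This inequality $r \geq d$ licenses the crude bound
\[
\binom{d+r}{d} = \prod_{k = 1}^d \frac{r+k}{k} \leq \frac{(2r)^d}{d!},
\]
obtained by replacing each numerator $r + k$ by $2r$. Substituting into the definition $n_0(d, r) = r + r^{-1}\bigl[\binom{d+r}{d} - 1\bigr]$ produces the estimate $n_0(d, r) \leq r + 2^d r^{d-1}/d!$.

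The next step is to show that this right-hand side is at most $r^{d-1}$, which amounts to the inequality $1/r^{d-2} + 2^d/d! \leq 1$. Since $2^d/d!$ is decreasing in $d$ and bounded by $2^6/6! < 1/10$ for $d \geq 6$, and since $1/r^{d-2}$ is minuscule in the regime $r \geq 103$, $d \geq 8$, this is immediate. Combining with the bound from \parref{bounds-sum-of-m} then yields
\[
n(d) \leq r(d)^{d-1} \leq \bigl(2^{2^{d-5}}\bigr)^{d-1} = 2^{(d-1) 2^{d-5}},
\]
as desired. The residual cases $d = 6, 7$ reduce to the verifications $n(6) = 160 \leq 1024 = 2^{10}$ and $n(7) = 20\,376 \leq 2^{24}$, both immediate from the numerical values listed at the top of the section.

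Given that the substantive work has already been carried out in bounding $r(d)$ in \parref{bounds-sum-of-m}, the main obstacle here is essentially cosmetic: one must confirm that the factor of $2^d$ introduced by the binomial estimate is swallowed cleanly by $d!$, and that the linear term $r$ is swallowed by $r^{d-1}$, so that the final exponent comes out exactly as $(d-1) 2^{d-5}$ with no extra additive constants. This is what restricts the argument to the stated range $d \geq 6$; for smaller degrees, the inequality $n(d) \leq 2^{(d-1) 2^{d-5}}$ simply fails against the explicit values of $n(d)$.
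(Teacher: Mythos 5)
Your proposal is correct and follows essentially the same route as the paper: both reduce via \parref{bounds-compute-n} to the explicit formula for \(n_0(d,r)\), bound the binomial term by \(\tfrac{1}{2}r^{d-1}\) (you supply the intermediate estimate \(\binom{d+r}{d} \leq (2r)^d/d!\) that the paper leaves implicit), absorb the remaining \(r\), and finish with \(r(d) \leq 2^{2^{d-5}}\) from \parref{bounds-sum-of-m}, handling \(d = 6,7\) by direct numerical check. The only cosmetic remark is that your chain \(r \geq m_{d-2} \geq m_6 = 103 \geq d\) literally covers only \(d \leq 103\); citing the doubly exponential lower bound of \parref{lower-bound} for \(m_{d-2}\) closes this more cleanly for all \(d \geq 8\).
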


\begin{proof}
This may be explicitly verified for \(d = 6\) and \(d = 7\). Assuming \(d \geq 8\),
set \(r \coloneqq r(d) = m_0 + \cdots + m_{d-2}\) as in
\parref{bounds-compute-r-explicitly}, and \parref{bounds-compute-n} gives
\[
n(d) = n_0(d,r) = r + \frac{1}{r}\Bigg[\binom{r + d}{d} - 1\Bigg]
\leq r + \frac{1}{2}r^{d-1}
\leq r^{d-1}.
\]
Applying the bound \(r \leq 2^{2^{d-5}}\) from \parref{bounds-sum-of-m} then
gives the result.
\end{proof}

\bibliographystyle{amsalpha}
\bibliography{main}
\end{document}